\numberwithin{equation}{section}					
\numberwithin{figure}{section}						
\numberwithin{table}{section}						
\providecommand{\keywords}[1]{\textbf{\textit{Keywords---}} #1}
\providecommand{\acknowledgement}[1]{\textbf{\textit{Acknowledgements---}} #1}
\newcommand{\N}{\mathbb N}  
\newcommand{\R}{\mathbb R}  
\newcommand{\Z}{\mathbb Z}
\newcommand{\I}{\mathbb I}
\newcommand{\J}{\mathbb J}
\newcommand{\K}{\mathbb K}
\newcommand{\T}{\mathrm{T}}
\newcommand{\V}{\mathrm{V}}
\newcommand{\1}{\mathbf{1}}
\newcommand{\0}{\mathbf{0}}
\newcommand{\bj}{\mathbf{j}}
\newcommand{\bl}{\mathbf{l}}
\newcommand{\ie}{\textsl{i.e.}~}
\newcommand{\eg}{\textsl{e.g.}~}
\newcommand{\mbf}[1]{\mathbf{#1}}
\newcommand{\rk}[1]{\mathsf{rank} \left(#1\right)}
\newcommand{\ad}{\mathrm{ad}}
\newcommand{\ds}{\displaystyle}
\newcommand{\ee}{\varepsilon}
\newcommand{\Cmin}{C\!\!-\!\min}
\newcommand{\Cmax}{C\!\!-\!\max}
\newcommand{\dd}{\mathrm{d}}
\newcommand{\ddt}{\frac{\mathrm{d}}{\mathrm{d}t}}
\newcommand{\ddu}[2]{\frac{\partial}{\partial u_{#1}^{(#2)}}}
\newcommand{\ddv}[2]{\frac{\partial}{\partial v_{#1}^{(#2)}}}
\newcommand{\ddx}{\frac{\partial}{\partial x}}
\newcommand{\ddxx}[1]{\frac{\partial}{\partial x_{#1}}}
\newcommand{\ddy}{\frac{\partial}{\partial y}}
\newcommand{\ddyy}[1]{\frac{\partial}{\partial y_{#1}}}
\newcommand{\ddth}[1]{\frac{\partial}{\partial \theta_{#1}}}
\newtheorem{thm}{Theorem}[section]  
\newtheorem{prop}{Proposition}[section]
\newtheorem{cor}{Corollary}[section]
\newtheorem{lem}{Lemma}[section]
\newtheorem{defn}{Definition}[section]
\newtheorem{rem}{Remark}[section]
\newcommand{\ol}[1]{\overline{#1}}
\newcounter{example}[section]
\newcounter{algorithm}
\newenvironment{algo}[1][]{\refstepcounter{algorithm}\par\medskip
   \textbf{Algorithm~\thealgorithm. #1} \rmfamily}{\medskip}
\title{\textbf{Differential Flatness by Pure Prolongation: Necessary and Sufficient Conditions
}}
\author{Jean L\'{e}vine\thanks{Centre Automatique et Systèmes (CAS), MINES Paris - PSL, 75006 Paris, France, e-mail: jean.levine@minesparis.psl.eu}}
\date{July 2023}
\begin{document}

\maketitle

 \begin{abstract}
In this article, we introduce the notion of \emph{differential flatness by pure prolongation}: loosely speaking, a system admits this property if, and only if, there exists a pure prolongation of finite order such that the prolonged system is feedback linearizable.
We obtain Lie-algebraic necessary and sufficient conditions for a general nonlinear multi-input system to satisfy this property.
These conditions are comprised of the involutivity and relative invariance of a pair of filtrations of distributions of vector fields. An algorithm computing the minimal prolongation lengths of the input channels that achieve the system linearization, yielding the associated flat outputs, is deduced. Examples that show the efficiency and computational tractability of the approach are then presented.
\end{abstract}

\keywords{differential flatness; Lie-B\"acklund isomorphism; Lie brackets; distributions of vector fields; prolongation of vector fields; dynamic feedback linearization.}

\section{Introduction}

The notions of \emph{static feedback linearization}  \cite{JR,HSM} and \emph{dynamic feedback linearization} of a nonlinear system, whose preliminary results were reported in \cite{CLMscl,CLM,Sh,ABMP-ieee} (see also \cite{Sluis-scl93,SlT-scl96,JF,BC-scl2004,FrFo-ejc2005}), were at the origin of a so far uninterrupted thread of studies in nonlinear system theory. In particular, during the last three decades, they gave birth to the concept of \emph{differential flatness}, that plays a prominent part in motion planning and trajectory tracking problems and their applications (see~\cite{pM1,FLMR-ijc,FLMR-ieee} and \cite{L_book} for a thorough presentation). 

Sufficient or necessary conditions in special cases, as well as a general differential flatness characterization, although without an upper bound on the number of iterations of the corresponding algorithm, have been obtained (see \eg \cite{L_book,L-AAECC} for a historical review). Nevertheless, the question of obtaining \emph{computationally tractable} general necessary and sufficient conditions for dynamic feedback linearization as well as for differential flatness, remains open.  In this paper, we restrict our study to the class of \emph{differentially flat systems by pure prolongation}, \ie, roughly speaking, the class of $n$-dimensional systems with $m$ inputs
$$\dot{x}=f(x,u_{1}, \ldots, u_{m})$$ 
for which there exists a multi-index $\mbf{j}\triangleq (j_{1}, \ldots, j_{m})$ such that the prolonged system
\begin{equation}\label{prolsys:eq}
\begin{aligned}
&\dot{x}=f(x,u_{1}, \ldots, u_{m}), \\
&u_{i}^{(j_{i}+1)}=v_{i},  \quad i=1, \ldots,m,
\end{aligned}
\end{equation} 
of dimension $n + \sum_{i=1}^{m}j_{i}$, where we have denoted by $u_{i}^{(j_{i})}$ the $j_{i}$-th order time derivative of $u_{i}$, is locally static feedback linearizable.
%

The corresponding transformation is called \emph{pure dynamic extension} in \cite{Sluis-scl93,SlT-scl96} in the context of  finding upper bounds on the indices $j_{1}, \ldots, j_{m}$, if they exist. We prefer using here the word \emph{prolongation} initially introduced in \cite{CLM}, in reference to \'E. Cartan \cite{C} (see also \cite{Sh}).  

This special class of differentially flat systems, initially introduced in the framework of dynamic feedback linearization  \cite{CLMscl,CLM,Sh,ABMP-ieee}, has also been studied in \cite{JF,FrFo-ejc2005}, under the name of \emph{linearization by prolongation}, again in terms of upper bounds of prolongation orders, and in \cite{BC-scl2004} where an algorithm, yielding a sufficient condition, is obtained. 

However, the necessary and sufficient conditions presented in this paper, tackled via the here introduced concept of \emph{equivalence by pure prolongation}, are original to the author's knowledge. Moreover, they are easily checkable by an algorithm that only involves Lie bracket and linear algebra operations, yielding the minimal pure prolongations if they exist.

Our main contributions are:
\begin{itemize}
\item[1.] The introduction of the \textbf{equivalence relation of systems by pure prolongation}, strictly coarser than the well-known equivalence by diffeomorphism and feedback (see \eg \cite{JR,hunt-et-al-ieee83,I-1sted,NvdS,L-GNL3}) and strictly finer than the Lie-B\"acklund equivalence (see \eg \cite{FLMR-ieee,L_book}). For this equivalence relation, the set of flat systems by pure prolongation is identified with the equivalence class of 0 (modulo the trivial vector field)(see definition~\ref{flatbypp:def});
\item[2.]\textbf{Proposition~\ref{G-k-j:prop}}, extending and specifying results implicitly present in various forms in \cite{CLM,JF,BC-scl2004,FrFo-ejc2005}, where we prove that, whatever the prolongation $\mbf{j} \triangleq (j_{1}, \ldots, j_{m})$, the filtration of prolonged distributions, built on the successive Lie brackets of the prolonged drift with the prolonged control vector fields, is \textbf{decomposable into the direct sum of two filtrations of distributions}. The first one, denoted by $\{\Delta_{k}^{(\mbf{j})}\}_{k\geq 0}$, is, at each point of the prolonged manifold, generated by vector fields belonging to the original tangent bundle, of dimension $n+m$ (in a suitably defined vertical bundle, see \eqref{Deltadef:eq}); The second one, denoted by $\{\Gamma_{k}^{(\mbf{j})}\}_{k\geq 0}$, is generated by the sequence of prolonged control vector fields of decreasing orders up to 1, \ie  $\Gamma_{k}^{(\mbf{j})} = \bigoplus_{i=1, \ldots, m}\{\ddu{i}{j_{i}-r} \mid r=0, \ldots, \min(j_{i}-1, k)\}$ (see \eqref{Gammadef:eq});
\item[3.] \textbf{Theorem~\ref{cns0:thm}} giving the \textbf{necessary and sufficient conditions}: (i) $\Delta_{k}^{(\mbf{j})}$ must be involutive with locally constant dimension and (ii) invariant by $\Gamma_{k}^{(\mbf{j})}$ for all $k$, and (iii)  $\dim \Delta_{k}^{(\mbf{j})}$ must be equal to $n+m$ for all $k$ large enough (strong controllability);
\item[4.] \textbf{Formula~\eqref{sig-max:def} and theorem~\ref{alg-proof:thm}} giving the prolongation lengths, if ithey exist, or otherwise a criterion of non flatness by pure prolongation;
\item[5.] \textbf{Algorithm \ref{algo1}} whose input comprises the system vector fields and whose output is either the list of minimal prolongation lengths or the claim that the system is not flat by pure prolongation, deduced in a finite number of steps.
\end{itemize}

The paper is organized as follows: In section~\ref{recall:sec}, we present the necessary recalls of basic results on differential flatness and feedback linearization. Then we introduce and study the purely prolonged distributions and the associated vertical ones in section~\ref{access:sec}. The equivalence by pure prolongation and the definition of flatness by pure prolongation, followed by its necessary and sufficient conditions and by the pure prolongation algorithm are then presented in section~\ref{purepro:sec}. A series of five examples, four with two inputs (sections \ref{chainex:subsec}, \ref{driftlessex:subsec}, \ref{5:3:ex} and \ref{pend:ex}) and one with three inputs (section \ref{3in:ex}) then illustrate our results in section~\ref{5ex:ex}. Moreover, the pendulum example (section~\ref{pend:ex}), is proven to be non flat by pure prolongation, though known to be differentially flat \cite{FLMR-ieee,L_book}, thus proving that the set of flat systems by pure prolongation is strictly contained in the set of differentially flat ones. The paper ends with concluding remarks and an appendix establishing a comparison formula between prolonged and non prolonged Lie brackets.

\section{Recalls on Differential Flatness and Feedback Linearization}\label{recall:sec}

Consider a non-linear system over a smooth $n$-dimensional manifold $X$ given by
\begin{equation}\label{nlsys:eq}
\dot{x} = f(x,u)
\end{equation}
where $x$ is the $n$-dimensional state vector, $u \in \R^m$ the input or control vector, with $m \leq n$, and $f$ a $C^{\infty}$ vector field in the tangent bundle $\T X$ of $X$ for each $u\in \R^{m}$ and whose dependence on $u$ is of class $C^{\infty}$. 

Before going further, we need the following notations, valid all along this paper:
\begin{itemize}
\item Boldface letters $\mbf{j}, \mbf{k},\ldots,$ are systematically used to denote multi-integers, \ie 
$$\mbf{j} \triangleq (j_{1}, j_{2},\ldots, j_{m}),  \quad \mbf{k} \triangleq (k_{1}, k_{2},\ldots, k_{m}), \quad \ldots$$
\item We denote the minimum of two arbitrary integers $k$ and $l$ by $k\lor l \triangleq \min\{k,l\}$, and their maximum by $k\wedge l \triangleq \max(k,l)$.

Also, for every $\mbf{j} \triangleq \left( j_{1}, \ldots, j_{m} \right)\in \N^{m}$ and $\mbf{k} \triangleq \left( k_{1}, \ldots, k_{m} \right)\in \N^{m}$, we use the \emph{componentwise minimum} notation
\begin{equation}\label{min-jj-kk}\mbf{j} \bigvee \mbf{k} \triangleq \left( j_{1} \vee k_{1}, \ldots, j_{m} \vee k_{m} \right) = \left( \min(j_{1}, k_{1}), \ldots, \min(j_{m}, k_{m}) \right).
\end{equation}
Accordingly, the \emph{componentwise maximum} is denoted by
\begin{equation}\label{max-jj-kk}\mbf{j} \bigwedge \mbf{k} \triangleq \left( j_{1} \wedge k_{1}, \ldots, j_{m} \wedge k_{m} \right) = \left( \max(j_{1}, k_{1}), \ldots, \max(j_{m}, k_{m}) \right).
\end{equation}
If $k_{1}=\cdots = k_{m} = k \in \N$, we also use the notations $\mbf{j} \bigwedge k$ and $\mbf{j} \bigvee k$, with $k$ in regular math typeface.
\item Overlined symbols will denote a collection of successive time derivatives of a time-dependent function as follows. Given a multi-integer $\mbf{k}=(k_1,\ldots,k_m) \in \N^m$ and a locally defined $C^{\infty}$ function $t\mapsto \xi(t)\in \R^m$, 
\begin{itemize}
\item $\ol{\xi}^{(\mbf{k})}$ denotes the vector 
$\left( \xi_{1}, \dot{\xi}_{1}, \ldots, \xi_{1}^{(k_{1})}, \ldots, \xi_m, \dot{\xi}_{m}, \ldots, \xi_{m}^{(k_{m})} \right)$ of dimension  $m+ \vert \mbf{k}\vert$, with $\vert \mbf{k}\vert \triangleq \sum_{i=1}^{m} k_{i}$ and $\xi_{i}^{(j)}\triangleq \frac{d^{j}\xi_{i}}{dt^{j}}$, $j=1,\ldots,k_{i}$, $i=1,\ldots,m$;
\item$\ol{\xi}$ denotes the infinite sequence 
$$\ol{\xi} \triangleq (\xi,\dot{\xi},\ddot{\xi},\ldots) \triangleq \left( \xi_{i}^{(k)} \; ; \; i=1,\dots,m\; ; k\geq 0 \right) \in \R^{m}_{\infty},$$ where $\R^{m}_{\infty} \triangleq \R^{m}\times \R^{m}\times\cdots$ is the product of an infinite number of copies of $\R^m$.
\end{itemize}
\end{itemize}
\subsection{Recalls on Lie-B\"{a}cklund Equivalence}\label{LB-subsec}

For a detailed presentation of the topics of this section, the reader may refer \eg to \cite{FLMR-ieee,Po-banach93,L_book,L-AAECC,Pome09siam}.

\begin{defn}\label{prol:def}
The infinite order jet space prolongation of system \eqref{nlsys:eq} is given by the pair 
$(X\times \R^{m}_{\infty}, C_f)$, where $X\times \R^{m}_{\infty}$ is the product of $X$ with an infinite number of copies of $\R^m$, with coordinates $(x,\ol{u})$, endowed with the Cartan field
\begin{equation}\label{Cf:def}
C_{f}= f(x,u)\frac{\partial}{\partial x} + \sum_{j\geq 0} \sum_{i=1}^{m} u_{i}^{(j+1)}\frac{\partial}{\partial u_{i}^{(j)}},
\end{equation}
defined on $\T X\times \T\R^{m}_{\infty}$, the tangent bundle of $X\times \R^{m}_{\infty}$.
\end{defn}
  
\begin{defn}[Lie-B\"{a}cklund equivalence]\label{LB:def}
Consider two systems:
\begin{equation}\label{sys-equiv:def}
\dot{x}=g(x,u) \quad \mathrm{and} \quad \dot{y}=\gamma(y,v)
\end{equation}
and their respective prolongations $(X\times \R^{m}_{\infty}, C_g)$, with coordinates $(x,\ol{u})$ 
and Cartan field
\begin{equation}\label{sys-equiv-Cg:def}
C_{g}= g(x,u)\frac{\partial}{\partial x} + \sum_{j\geq 0}\sum_{i=1}^{m}
  u_{i}^{(j+1)}\frac{\partial}{\partial u_{i}^{(j)}},
\end{equation}
and $(Y\times \R^{\mu}_{\infty}, C_{\gamma})$, with coordinates $(y,\ol{v})$, 
and Cartan field
\begin{equation}\label{sys-equiv-Cgam:def}
C_{\gamma}= \gamma(y,v)\frac{\partial}{\partial y} + \sum_{j\geq 0}\sum_{i=1}^{\mu} v_{i}^{(j+1)}\frac{\partial}{\partial v_{i}^{(j)}}.
\end{equation}

We say that they are  \emph{Lie-B\"acklund equivalent} at a pair of points $(x_0,\ol{u}_0)$ and $(y_0,\ol{v}_0)$ if, and only if, there exists neighborhoods ${\mathcal N}_{x_{0},\ol{u}_{0}} \subset X\times \R^{m}_{\infty}$ and ${\mathcal N}_{y_{0},\ol{v}_{0}} \subset Y\times \R^{\mu}_{\infty}$ and a $C^{\infty}$ isomorphism\footnote{Recall that a continuous function and, \emph{a fortiori}, differentiable, resp. $C^{\infty}$, depends, by definition of the source and target product topologies, on a finite number of components of its variables, namely $\Phi$ (resp. $\Psi$) depends an a finite number of components of $(y,\ol{v})$ (resp. $(x,\ol{u})$)  (see \eg \cite{KLV,Z,L_book}).} $\Phi : {\mathcal N}_{y_{0},\ol{v}_{0}} \rightarrow {\mathcal N}_{x_{0},\ol{u}_{0}}$ satisfying $\Phi(y_{0},\ol{v}_{0})= (x_0,\ol{u}_0)$, with $C^{\infty}$ inverse  $\Psi$, such that the respective Cartan fields are $\Phi$ and $\Psi$ related, \ie $\Phi_{\ast}C_{\gamma}=C_{g}$ in ${\mathcal N}_{x_{0},\ol{u}_{0}}$ and $\Psi_{\ast}C_{g}=C_{\gamma}$ in ${\mathcal N}_{y_{0},\ol{v}_{0}}$.
\end{defn}

In other words, the two systems are \emph{Lie-B\"acklund equivalent} at the points $(x_0,\ol{u}_0)$ and $(y_0,\ol{v}_0)$  if there exist neighborhoods of these points where every integral curve of the first system is mapped to an integral curve of the second one and conversely, with the same time parameterization. Clearly, this relation is an equivalence relation.

We recall, without proof, a most important result from \cite{pM1} (see also 
\cite{FLMR-ijc,FLMR-ieee,L_book}) giving an interpretation of the Lie-B\"{a}cklund equivalence in terms of diffeomorphism in finite dimension and endogenous dynamic feedback, that will be useful later on. 

\begin{thm}[Martin~\cite{pM1}]\label{endo-equiv:thm}
If the two systems \eqref{sys-equiv:def}
are Lie-B\"{a}cklund equivalent at a given pair of points, then (i) and (ii) must be satisfied:
\begin{itemize}
\item[(i)] $m=\mu$, \ie they must have the same number of independent inputs;
\item[(ii)]
there exist 
\begin{itemize}
\item an endogenous dynamic feedback\footnote{A dynamic feedback is said \emph{endogenous} if, and only if, the closed-loop system and the original one are Lie-B\"{a}cklund equivalent, \ie if, and only if, the extended state $z$ can be locally expressed as a smooth function of $x$, $u$ and a finite number of time derivatives of $u$ (see \cite{pM1,FLMR-ijc,FLMR-ieee,L_book}).} 
\begin{equation}\label{dynfeed:def}
u=\alpha(x,z,w), \quad  \dot{z}=\beta(x,z,w),
 \end{equation}
 where $z$ belongs to $Z$, a finite dimensional smooth manifold,
\item a multi-integer\footnote{Recall that we denote by 
$v^{(\mbf{r})} \triangleq \left( v_{1}^{(r_{1})}, \ldots, v_{m}^{(r_{m})} \right) \triangleq 
\left( \frac{\dd^{r_{1}}v_{1}}{\dd t^{r_{1}}}, \ldots, \frac{\dd^{r_{m}}v_{m}}{\dd t^{r_{m}}}\right)$.} 
$\mbf{r} \triangleq \left( r_{1}, \ldots, r_{m}\right)$,
\item and a local diffeomorphism  $\chi: Y\times \R^{\vert \mbf{r}\vert}  \rightarrow  X\times Z$, 
\end{itemize}
all defined in a neighborhood of the considered points, such that the extended system
\begin{equation}\label{CLdynfeed-equiv:eq}
\dot{y}= \gamma(y,v), \quad v^{(\mbf{r})}=w
\end{equation} 
and the closed-loop one 
\begin{equation}\label{CLdynfeed:eq}
\dot{x}=g(x,\alpha(x,z,w)), \quad \dot{z}=\beta(x,z,w)
\end{equation} 
are $\chi$-related for all $w\in \R^{m}$, \ie 
\begin{equation}\label{extended-diffeo:eq}
(x,z)=\chi(y,v, \dot{v},\ldots,v^{(\mbf{r-1})}), \qquad 
(y,v, \dot{v},\ldots,v^{(\mbf{r-1})})=\chi^{-1}(x,z)
\end{equation}
and 
\begin{equation}\label{extended-diffeo-fields:eq}
\hat{g} =\chi_{\ast}\hat{\gamma}, \qquad
\hat{\gamma} = \chi^{-1}_{\ast}\hat{g}
\end{equation}
where we have denoted
$$
\begin{aligned}
&\hat{g}(x,z,w) \triangleq g(x,\alpha(x,z,w))\frac{\partial}{\partial x} + \beta(x,z,w)\frac{\partial}{\partial z}\\
&\hat{\gamma}(y,v, \dot{v}, \ldots, v^{(\mbf{r-1})},w) \triangleq \gamma(y,v)\frac{\partial}{\partial y} + \sum_{i=1}^{m}\sum_{j=0}^{r_{i}-1} v_{i}^{(j+1)}\frac{\partial}{\partial v_{i}^{(j)}} + w_{i}\frac{\partial}{\partial v_{i}^{(r_{i})}}.
\end{aligned}
$$
\end{itemize}
\end{thm}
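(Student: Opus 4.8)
The plan is to exploit the one structural fact that tames infinite jet spaces: a continuous --- hence \emph{a fortiori} $C^{\infty}$ --- map between spaces carrying the product topology depends on only finitely many components of its argument (the footnote to Definition~\ref{LB:def}). Applied to the Lie-B\"acklund isomorphism $\Phi:\mathcal{N}_{y_{0},\ol{v}_{0}}\to\mathcal{N}_{x_{0},\ol{u}_{0}}$ and to its inverse $\Psi$, this yields multi-integers $\mbf{q},\mbf{p}$ such that the components of $\Phi$ delivering $(x,u)$ depend only on $(y,v,\dot{v},\ldots,v^{(\mbf{q})})$ and those of $\Psi$ delivering $(y,v)$ depend only on $(x,u,\dot{u},\ldots,u^{(\mbf{p})})$. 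All the remaining components are then forced by the intertwining relations $\Phi_{\ast}C_{\gamma}=C_{g}$ and $\Psi_{\ast}C_{g}=C_{\gamma}$: iterating $C_{\gamma}$ along $\Phi$ produces $\dot{u},\ddot{u},\ldots$ one at a time, each new derivative costing exactly one extra $v$-derivative, so the component of $\Phi$ delivering $u^{(k)}$ has $v$-order $k$ plus a fixed offset, and symmetrically for $\Psi$. This reduces the theorem to a statement about two mutually inverse maps of finite order that respect the shift-like action of the Cartan fields on the derivative coordinates.

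For (i), I would run a dimension count on truncations. By the paragraph above, the order-$N$ truncation $\Phi^{[N]}$ sends the finite coordinate block $(y,v,\ldots,v^{(N)})$, of dimension $\dim Y+\mu(N+1)$, into a block $(x,u,\ldots,u^{(N')})$ with $N'=N+c$ for a fixed offset $c$, of dimension $\dim X+m(N'+1)$. Since $\Psi\circ\Phi=\mathrm{id}$, for $N$ large $\Phi^{[N]}$ admits a smooth left inverse built from $\Psi$, hence is an immersion, whence $\dim Y+\mu(N+1)\le\dim X+m(N+c+1)$; the symmetric inequality follows from $\Phi\circ\Psi=\mathrm{id}$. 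Letting $N\to\infty$ and reading off the coefficient of $N$ gives $\mu\le m$ and $m\le\mu$, \ie $m=\mu$. (Equivalently, $m$ is the asymptotic per-level growth rate of the finite-dimensional approximations of $(X\times\R^{m}_{\infty},C_{g})$, a plain Lie-B\"acklund invariant.)

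For (ii), choose $\mbf{r}$ componentwise large enough that the $x$-component of $\Phi$ depends only on $(y,v,\ldots,v^{(\mbf{r-1})})$ and the $u$-component only on $(y,v,\ldots,v^{(\mbf{r})})$; set $w\triangleq v^{(\mbf{r})}$. Let $\chi:Y\times\R^{\vert\mbf{r}\vert}\to X\times Z$ be the order-$(\mbf{r-1})$ truncation of $\Phi$, written $(y,v,\dot{v},\ldots,v^{(\mbf{r-1})})\mapsto(x,z)$ with $Z$ the target of its non-$x$ components; the corresponding truncation of $\Psi$ provides its inverse, yielding the local diffeomorphism and \eqref{extended-diffeo:eq}. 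Now read off the feedback: the $u$-component of $\Phi$, re-expressed through $\chi^{-1}$, is a map $\alpha(x,z,w)$; differentiating the defining relation of $z$ along $C_{\gamma}$ and substituting $\Phi_{\ast}C_{\gamma}=C_{g}$ produces $\dot{z}=\beta(x,z,w)$; and $\Phi_{\ast}C_{\gamma}=C_{g}$, truncated at order $\mbf{r}$, says exactly that the field $\hat{\gamma}$ of the statement, whose top inputs are the $w_{i}=v_{i}^{(r_{i})}$, is $\chi$-related to $\hat{g}$, \ie \eqref{extended-diffeo-fields:eq}. Endogeneity of the feedback is built in: $z$ is, by construction, a sub-block of $\Psi$, hence a smooth function of $x$ and finitely many time derivatives of $u$.

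The main obstacle is the single structural lemma on which both parts rest: that the \emph{a priori} distinct finite orders $\mbf{p},\mbf{q}$ can be consolidated into one componentwise $\mbf{r}$ past which no further $v$-derivatives are ever needed, with the associated finite-dimensional truncation an \emph{honest} local diffeomorphism and not merely an immersion or a submersion. Establishing it requires an induction on derivative order, using $\Phi_{\ast}C_{\gamma}=C_{g}$ and $\Psi_{\ast}C_{g}=C_{\gamma}$ to pin down the triangular (shift-by-one) form of $\Phi$ and $\Psi$ with respect to the derivative gradings, together with a rigidity argument extracting, from the functional identities $\Psi\circ\Phi=\mathrm{id}$ and $\Phi\circ\Psi=\mathrm{id}$ written on truncations, that the relevant Jacobians are square and invertible. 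The dimension inequalities in (i) and the diffeomorphism claim in (ii) are then two faces of that one lemma.
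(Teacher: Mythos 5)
First, a point of reference: the paper does not prove this statement --- it is explicitly recalled ``without proof'' from Martin \cite{pM1} --- so there is no in-paper argument to compare yours against. Your sketch does follow the standard lines of Martin's proof (finite dependence of $\Phi,\Psi$ on jet coordinates, asymptotic dimension count on truncations for (i), truncation of $\Phi$ into $\chi$ and reading off $\alpha,\beta$ for (ii)), but two points need attention.

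For (i), the step ``$\Psi\circ\Phi=\mathrm{id}$, hence $\Phi^{[N]}$ admits a smooth left inverse built from $\Psi$, hence is an immersion'' does not go through as stated. From $(y,v,\ldots,v^{(N)})$ one can only compute $(x,u,\ldots,u^{(N-q)})$ (derivatives are \emph{lost}, by an offset $q$, not gained), and applying the truncation of $\Psi$ to that block recovers only $(y,v,\ldots,v^{(N-q-p)})$; the composition is therefore the canonical projection onto a \emph{lower-order} block, which is not injective, so no left inverse of $\Phi^{[N]}$ on the full block is produced. The inequality you want is obtained the other way round: the composition $\Phi^{[\cdot]}\circ\Psi^{[\cdot]}$ equals a truncation projection, which is surjective onto an open set, so (by Sard, say) the intermediate space cannot have smaller dimension than the target, giving $\dim Y+\mu(N+c+1)\geq \dim X + m(N+1)$ and hence $\mu\geq m$ as $N\to\infty$, with the symmetric inequality from the other composition. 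The conclusion $m=\mu$ survives, but the mechanism is surjectivity of the composed projections, not injectivity of a single truncation.

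For (ii), you have correctly isolated the crux --- that the order-$(\mbf{r-1})$ truncation of $\Phi$ is an honest local diffeomorphism onto $X\times Z$, with $Z$ finite-dimensional and $z$ an endogenous variable --- but you leave it unproved, calling it ``the main obstacle.'' That lemma \emph{is} the theorem: once it is granted, reading off $\alpha$ from the $u$-component of $\Phi\circ\chi^{-1}$ and $\beta$ from $L_{C_{\gamma}}$ applied to the $z$-components is routine, and \eqref{extended-diffeo-fields:eq} is just the truncated intertwining relation. Establishing it requires the triangular (shift-by-one) structure of $\Phi$ and $\Psi$ with respect to the derivative grading together with a rank argument showing that the relevant Jacobian blocks are square and invertible; as submitted, the proposal identifies where this argument must live but does not supply it, so the proof is incomplete at its central step.
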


\subsection{Recalls on Differential Flatness}\label{flat:subsec}
\begin{defn}\label{flat:def}
We say that system  \eqref{nlsys:eq} is \emph{differentially flat} (or, more shortly, \emph{flat}) at the pair of points $(x_0,\ol{u}_0)$ and $\ol{y}_{0} \in \R^{m}_{\infty}$ if and only if, it is \emph{Lie-B\"{a}cklund equivalent to the trivial system ($\R^{m}_{\infty}, \tau)$ where $\tau$ is the \emph{trivial} Cartan field
\begin{equation}\label{triv:def}
\tau \triangleq \sum_{j\geq 0}\sum_{i=1}^{m} y_{i}^{(j+1)}\frac{\partial}{\partial y_{i}^{(j)}}
\end{equation}
at the considered points}.
\end{defn}

Otherwise stated, the locally defined flat output $y=\Psi(x, \ol{u})$ is such that $(x,\ol{u})= \Phi(\ol{y}) \triangleq (\Phi_{-1}(\ol{y}), \Phi_0(\ol{y}), \Phi_1(\ol{y}), \ldots)$, with\footnote{The first component of $\Phi$, corresponding to the $x$ component, is denoted by $\Phi_{-1}$ so that the component $\Phi_i$ effectively corresponds to the $i$th time derivative of $u$.} $\ddt{\Phi}_{-1}(\ol{y})\equiv f(\Phi_{-1}(\ol{y}), \Phi_0(\ol{y}))$ for all sufficiently differentiable  function $y: t\in \R \mapsto y(t)\in \R^m$.

This definition immediately implies that a system is flat if, and only if, there exists a generalized output $y= \Psi(x,\ol{u})$ of dimension $m$, depending at most on a finite number of derivatives of $u$, with independent derivatives of all orders, such that $x$ and $\ol{u}$ can be expressed in terms of $y$ and a finite number of its successive derivatives, \ie $(x,\ol{u})= \Phi(\ol{y})$, and such that the system equation $\ddt{\Phi_{-1}}(\ol{y})= f \circ \Phi(\ol{y})$ is identically satisfied for all sufficiently differentiable $y: \R \rightarrow \R^m$. 

For a flat system, with the notations of theorem~\ref{endo-equiv:thm}, the vector field $\gamma$, or $\widehat{\gamma}$ indifferently, corresponds  to the linear system in Brunovsk\'{y} canonical form
\begin{equation}\label{flat-CLdynfeed-equiv:eq}
y_{i}^{(r_{i}+1)}=w_{i}, \qquad i=1,\ldots,m,
\end{equation}
and $C_{\gamma}$, defined by \eqref{sys-equiv-Cgam:def}, is the trivial Cartan field $C_{\gamma} = \tau$, with $\tau$ given by \eqref{triv:def}.
\begin{rem}\label{0-LB:rem}
In view of Definition~\ref{LB:def}, since the trivial Cartan field \eqref{triv:def} is the infinite jet prolongation of    $\dot{y}_{i}=u_{i}$, $i=1, \ldots, m$, system \eqref{nlsys:eq} is flat if, and only if, it is Lie-B\"{a}cklund equivalent to $m$ simple integrators in parallel. We may also remark that, since the Cartan field is equal to 0 (modulo $\tau)$, the class of flat systems may be identified with the Lie-B\"{a}cklund equivalence class of 0 (modulo $\tau$).
\end{rem}

Theorem~\ref{endo-equiv:thm} reads:
\begin{cor}
If system \eqref{nlsys:eq} is flat at a given point, there exists an endogenous dynamic feedback of the form \eqref{dynfeed:def}, a multi-integer $\mbf{r} \triangleq \left( r_{1}, \ldots, r_{m}\right)$ and a finite dimensional local diffeomorphism $\chi$ such that systems  \eqref{CLdynfeed:eq}, with $f$ in place of $g$, and \eqref{flat-CLdynfeed-equiv:eq} are $\chi$-related for all $w\in \R^{m}$.
\end{cor}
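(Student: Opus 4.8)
The plan is to read this off directly from Martin's theorem (Theorem~\ref{endo-equiv:thm}) applied to the pair consisting of \eqref{nlsys:eq} and the trivial system. First I would invoke Definition~\ref{flat:def} together with Remark~\ref{0-LB:rem}: flatness of \eqref{nlsys:eq} at the given point means exactly that it is Lie-B\"acklund equivalent, at the corresponding pair of points, to the system $\dot{y}_i = v_i$, $i=1,\ldots,m$, whose infinite jet prolongation carries the trivial Cartan field $\tau$ of \eqref{triv:def}. Hence the hypotheses of Theorem~\ref{endo-equiv:thm} are satisfied with $g \triangleq f$ and $\gamma(y,v)\triangleq v$, so in particular $\mu=m$ by part~(i), and $C_\gamma = \tau$.

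Next I would unpack conclusion~(ii) of Theorem~\ref{endo-equiv:thm} in this special case. It furnishes an endogenous dynamic feedback $u=\alpha(x,z,w)$, $\dot{z}=\beta(x,z,w)$ of the form \eqref{dynfeed:def}, a multi-integer $\mbf{r}=(r_1,\ldots,r_m)$, and a finite-dimensional local diffeomorphism $\chi : Y\times \R^{\vert \mbf{r}\vert}\rightarrow X\times Z$, all defined near the considered points, such that the extended system \eqref{CLdynfeed-equiv:eq} and the closed-loop system \eqref{CLdynfeed:eq} (with $f$ in place of $g$) are $\chi$-related for all $w$, via \eqref{extended-diffeo:eq}--\eqref{extended-diffeo-fields:eq}. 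It then remains only to observe that, for $\gamma(y,v)=v$, the extended system \eqref{CLdynfeed-equiv:eq}, namely $\dot{y}=v$, $v^{(\mbf{r})}=w$, is nothing but the Brunovsk\'y canonical form \eqref{flat-CLdynfeed-equiv:eq}: from $\dot{y}_i=v_i$ one gets $v_i^{(r_i)}=y_i^{(r_i+1)}$, so the constraint $v_i^{(r_i)}=w_i$ reads $y_i^{(r_i+1)}=w_i$, and the intermediate jet coordinates $(v,\dot{v},\ldots,v^{(\mbf{r-1})})$ coincide with $(\dot{y},\ddot{y},\ldots,y^{(\mbf{r})})$. Substituting this identification into \eqref{extended-diffeo:eq}--\eqref{extended-diffeo-fields:eq} yields precisely the asserted $\chi$-relatedness of \eqref{CLdynfeed:eq} and \eqref{flat-CLdynfeed-equiv:eq}.

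There is essentially no obstacle beyond the bookkeeping of jet coordinates: the whole content is already in Theorem~\ref{endo-equiv:thm}, and the corollary is merely its specialization to the target system being $m$ integrators in parallel. The one point deserving a line of care is that the generalized output $y=\Psi(x,\ol{u})$ delivered by the Lie-B\"acklund isomorphism is a genuine flat output --- $y$ and its successive derivatives are functionally independent, and $x$ and $\ol{u}$ are recovered from a finite jet of $y$ --- which is exactly what is recorded in the discussion following Definition~\ref{flat:def}; this is what guarantees that $\chi$ is an honest diffeomorphism onto $X\times Z$ rather than only a submersion, so that \eqref{extended-diffeo:eq} is invertible as stated.
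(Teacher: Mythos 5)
Your proposal is correct and follows exactly the route the paper takes: the corollary is stated as an immediate specialization of Theorem~\ref{endo-equiv:thm} to the case where the target system is the trivial one, with the extended system \eqref{CLdynfeed-equiv:eq} for $\gamma(y,v)=v$ identified with the Brunovsk\'y form \eqref{flat-CLdynfeed-equiv:eq}, precisely as in the paragraph preceding the corollary. Your added remark on the jet-coordinate bookkeeping and the invertibility of $\chi$ is consistent with the paper's discussion following Definition~\ref{flat:def}.
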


Consequently, a flat system can be transformed, by diffeomorphism and feedback of a suitably extended space, in a linear controllable system, which motivates the recalls of the next section.

\subsection{Recalls on Feedback Linearization}


Feedback linearizable systems \cite{JR,hunt-et-al-ieee83} (see also \cite{I-1sted,NvdS,M,L-GNL3}) constitute a  subclass of differentially flat systems. They correspond to the equivalence class of linear controllable systems with respect to the following finer equivalence relation, called \emph{equivalence by diffeomorphism and feedback}.

\begin{defn}\label{diff-feed-equiv:def}
The two systems given by \eqref{sys-equiv:def} are said equivalent by diffeomorphism and feedback if, and only if, there exists 
\begin{itemize}
\item a local diffeomorphism $\varphi$ from a neighborhood of an equilibrium point of $X$ (which may be chosen, without loss of generality, as the origin $0\in X$) to a suitable neighborhood of $0\in Y$,
\item and a \emph{static} feedback $v=\alpha(x,u)$, $\alpha$ being invertible with respect to $u$ for all $x$ in the above mentioned neighborhood of the origin, \ie $\rk{\frac{\partial\alpha}{\partial u}} (x,u) = m$ for all $x$ and $u$ as above, 
\end{itemize} 
such that 
$\varphi^{-1}_{\ast}(f(x,u)) = \gamma \circ (\varphi \times \alpha)(x,u) = \gamma(y,v)$,
\end{defn}
Indeed, this equivalence implies that $n = \dim X = \dim Y$ and that both $u$ and $v$ are $m$-dimensional.

\begin{defn}[Feedback Linearizability]\label{feedlin:def}
System \eqref{nlsys:eq} is said \emph{static feedback linearizable} or, shortly, \emph{feedback linearizable} if the context allows, if, and only if, it is equivalent by diffeomorphism and feedback to a linear system in Brunovsk\'y controllability canonical form
\begin{equation}\label{lin-sys:eq}
y_{i}^{(r_{i})}= v_{i}, \quad i=1,\ldots, m,
\end{equation}
where the multi-integer $\mbf{r} \triangleq (r_{1}, \ldots, r_{m})$, whose components $r_i$ are called the \emph{controllability indices} of system~\eqref{lin-sys:eq}, satisfies $\vert \mbf{r} \vert \triangleq \sum_{i=1}^{m}r_{i}=n = \dim X$. 
\end{defn}

Indeed, since the $n$-dimensional vector 
$$\ol{y}^{(\mbf{r-1})} \triangleq (y_{1}, \ldots, y_{1}^{(r_{1}-1)}, \ldots, y_{m}, \ldots, y_{m}^{(r_{m}-1)})$$ 
and $x$ are diffeomorphic (again we have noted $\mbf{r-1} \triangleq (r_{1}-1, \ldots, r_{m}-1)$), it is immediate to verify that $(y_{1}, \ldots, y_{m})$ is a flat output and that every feedback linearizable system is differentially flat.

These systems have been first characterized by \cite{JR, HSM,hunt-et-al-ieee83} for control-affine systems, \ie systems given by $f(x,u)= f_{0}(x) + \sum_{i=1}^{m}u_{i}f_{i}(x)$.
More generally, it can be easily proven that systems of the form \eqref{nlsys:eq} are feedback linearizable if, and only if, the first-order control-affine prolongation
\begin{equation}\label{control-aff-sys:eq}
\begin{aligned}
&\dot{x}=f(x,u), \\
&\dot{u}_{i}= u_{i}^{(1)}, \quad i=1,\ldots,m
\end{aligned}
\end{equation} 
with state $\ol{x}^{(\0)}\triangleq (x,u) = (x_{1}, \ldots, x_{n}, u_{1},\ldots, u_{m}) \in X^{(\0)} \triangleq X\times \R^{m}$ and control vector $u^{(\1)}\triangleq ( u_{1}^{(1)}, \ldots, u_{m}^{(1)} ) \in \R^{m}$, is feedback linearizable (see \eg \cite{CLMscl,vdS-scl84,Sluis-scl93,L-GNL3}).

Indeed, 
in the local coordinates\footnote{We introduce the superscript $^{(\0)}$ from now on to get ready to work with higher order prolongations (see section~\ref{access:sec}).} $\ol{x}^{(\0)}$ of $X^{(\0)}$, denoting the associated vector fields by \begin{equation}\label{prolong1vecf:eq}
g_{0}^{(\0)}(\ol{x}^{(\0)})\triangleq \sum_{i=1}^{n}f_{i}(x,u)\frac{\partial}{\partial x_{i}}, \qquad 
g_{i}^{(0)}(\ol{x}^{(\0)})\triangleq \frac{\partial}{\partial u_{i}}, \quad i=1,\ldots,m,
\end{equation}
defined on the tangent bundle $\T X^{(\0)} = \T X \times \T \R^{m}$, system \eqref{control-aff-sys:eq} reads
\begin{equation}\label{prolong1sys:eq}
\dot{\ol{x}}^{(\0)}= g_{0}^{(\0)}(\ol{x}^{(\0)}) + \sum_{i=1}^{m} u_{i}^{(1)} g_{i}^{(0)}(\ol{x}^{(\0)})
\end{equation} 
with the usual abuse of notations identifying a vector field expressed in local coordinates with its associated (Lie derivative) first order partial differential operator.

Until now, for simplicity's sake, a system \eqref{nlsys:eq} will always be considered in the form \eqref{prolong1sys:eq}, even if the vector-field $f$ is already given in control-affine form. For the sake of coherence, we set $u=(u_1, \ldots, u_m) \triangleq u^{(\0)} = (u_{1}^{(0)}, \ldots, u_{m}^{(0)})$, so that $\dot{u}^{(\0)}=u^{(\1)}$.

\subsection{Recalls on Lie brackets and Distributions}

The Lie bracket $[\eta, \gamma]$ of two arbitrary vector fields $\eta$ and $\gamma$ of $\T X^{(\0)}$ is given, in the $\ol{x}^{(\0)}$-coordinates, by $[\eta, \gamma] \triangleq \sum_{i=1}^{n+m} \sum_{j=1}^{n+m} \left( \eta_{j}\frac{\partial \gamma_{i}}{\partial x_{j}^{(0)}} -  \gamma_{j}\frac{\partial \eta_{i}}{\partial x_{j}^{(0)}} \right)\frac{\partial}{\partial x_{i}^{(0)}}$, with $\ol{x}^{(\0)}  = (x,u^{(\0)}) \triangleq (x_{1}^{(0)}, \ldots, x_{n+m}^{(0)})$.

For iterated Lie brackets, we use the notation $\ad_{\eta}\gamma \triangleq [\eta,\gamma]$ and $\ad_{\eta}^{k}\gamma \triangleq [\eta,\ad_{\eta}^{k-1}\gamma]$ for $k\geq 1$, with the convention that $\ad_{\eta}^{0}\gamma = \gamma$. 
In addition, if $\Gamma$ is an arbitrary distribution of vector fields on $\T X^{(\0)}$, we note $\ad_{\eta}^{k}\Gamma  \triangleq \{ \ad_{\eta}^{k}\gamma: \gamma\in \Gamma\}$.

The distribution $\Gamma$ is said \emph{involutive} if, and only if, $[\eta, \gamma]\in \Gamma$ for every pair of vector fields $\eta, \gamma \in \Gamma$, to which case we note $[\Gamma,\Gamma]\subset \Gamma$, or $\ol{\Gamma}=\Gamma$, where $\ol{\Gamma}$ denotes the involutive closure of $\Gamma$, \ie the smallest involutive distribution containing $\Gamma$.

If the distribution $\Gamma$ is locally generated by $p$ vector fields $\gamma_{1},\ldots,\gamma_{p}$, with $p$ arbitrary, we write $\Gamma \triangleq \{\gamma_{1},\ldots,\gamma_{p}\}$.
We also denote by $\Gamma(\xi) \triangleq \{\gamma_{1}(\xi),\ldots,\gamma_{p}(\xi)\}$ the vector space generated by the vectors $\gamma_{1}(\xi),\ldots,\gamma_{p}(\xi)$ at a point $\xi \in X^{(\0)}$.
 
Consider the ($\0$th-order or non prolonged) filtration of distributions built on the vector fields \eqref{prolong1vecf:eq}\footnote{As before, the superscript $^{(\0)}$ is used to indicate that the distributions $G_{k}^{(\0)}$ and the related indices $\rho_{k}^{(\0)}$ and $\kappa_{k}^{(\0)}$ are  built on the non prolonged vector fields \eqref{prolong1vecf:eq} and to distinguish them from the prolonged distributions $G_{k}^{(\mbf{j})}$ of arbitrary $\mbf{j}$th order, $\mbf{j}\in \N^{m}$, and related indices, $\rho_{k}^{(\mbf{j})}$ and $\kappa_{k}^{(\mbf{j})}$, later introduced in sections~\ref{access:sec} and~\ref{purepro:sec}.}:
\begin{equation}\label{dist0:def}
G_{0}^{(\0)} \triangleq \{g_{1}^{(\0)},\ldots, g_{m}^{(\0)}\}, \qquad G_{k+1}^{(\0)} \triangleq G_{k}^{(\0)} + \ad_{g_{0}^{(\0)}} G_{k}^{(\0)}, \quad \forall k\geq 0,
\end{equation}
indeed satisfying $G_{0}^{(\0)} \subset \cdots \subset G_{k}^{(\0)} \subset G_{k+1}^{(\0)} \subset \cdots \subset \T X^{(\0)}$.

\begin{thm}[\cite{JR,hunt-et-al-ieee83}]\label{feedlin-thm}
System \eqref{nlsys:eq}, or equivalently system \eqref{control-aff-sys:eq},  is feedback linearizable in a neighborhood of the origin of $X^{(\0)}$ if, and only if, in this neighborhood:
\begin{itemize}
\item[(i)] $G_{k}^{(\0)}$ is involutive with constant dimension for all $k \geq 0$,
\item[(ii)] there exists an integer $k^{(\0)}_{\star} \leq n$ such that $G_{k}^{(\0)}=G_{k^{(\0)}_{\star}}^{(\0)}= \T X^{(\0)}$ for all $k\geq k^{(\0)}_{\star}$.
\end{itemize}
\end{thm}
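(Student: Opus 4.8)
\medskip

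\noindent\textbf{Proof plan.} I would follow the classical route, resting on the fact that the flag $\{G_k^{(\0)}\}_{k\ge0}$ is, up to diffeomorphism, an invariant of the equivalence class of \eqref{control-aff-sys:eq} under diffeomorphism and feedback; this way both implications reduce to a computation on the linear model \eqref{lin-sys:eq}. The first thing to settle is this invariance. A diffeomorphism $\varphi$ conjugates Lie brackets, so it carries each $G_k^{(\0)}$ onto the corresponding distribution of the transformed system, and conditions (i)--(ii) are diffeomorphism invariant. A static feedback compatible with the control-affine structure \eqref{prolong1sys:eq} replaces $g_1^{(\0)},\dots,g_m^{(\0)}$ by an invertible combination $\sum_l b_{lj}g_l^{(\0)}$ and $g_0^{(\0)}$ by $g_0^{(\0)}+\sum_l a_l g_l^{(\0)}$; expanding the iterated brackets through the Leibniz and Jacobi identities, I would prove by induction on $k$ that the transformed flag $\{\wtilde G_k^{(\0)}\}$ equals $\{G_k^{(\0)}\}$ as soon as $G_0^{(\0)},\dots,G_{k-1}^{(\0)}$ are involutive — the only terms able to leave $G_k^{(\0)}$ are brackets between generators of $G_{k-1}^{(\0)}$, which involutivity confines. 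For \eqref{control-aff-sys:eq} the bottom distribution $G_0^{(\0)}=\spn{\frac{\partial}{\partial u_1},\dots,\frac{\partial}{\partial u_m}}$ is abelian, hence involutive, and whenever (i)--(ii) hold for a system the involutivity needed to transport conditions across a feedback is present.

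\medskip

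\noindent\textbf{Necessity.} Suppose \eqref{control-aff-sys:eq} is equivalent by diffeomorphism and feedback to \eqref{lin-sys:eq} on $\R^{N}$, $N\triangleq n+m$, with indices $\mbf{r}=(r_1,\dots,r_m)$, $\vert\mbf{r}\vert=N$. In the coordinates $\xi_i^{(\ell)}=y_i^{(\ell)}$ of the linear model, $\ad_{f_0}^{k}f_j=(-1)^{k}\frac{\partial}{\partial\xi_j^{(r_j-1-k)}}$ for $0\le k\le r_j-1$ and $\ad_{f_0}^{k}f_j=0$ for $k\ge r_j$, so $G_k=\spn{\tfrac{\partial}{\partial\xi_i^{(\ell)}} : \ell\ge\max(0,r_i-1-k),\ i=1,\dots,m}$; these distributions are constant-coefficient, hence involutive, of constant dimension $\sum_i\big((k+1)\vee r_i\big)$, which equals $N$ exactly for $k\ge\max_i r_i-1$; since the remaining $m-1$ indices are $\ge1$ one has $\max_i r_i-1\le N-m=n$. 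Transporting this flag back through the (now involutive) inverse feedback and the inverse diffeomorphism yields (i) for \eqref{control-aff-sys:eq} and (ii) with $k_\star^{(\0)}=\max_i r_i-1\le n$.

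\medskip

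\noindent\textbf{Sufficiency.} Assume (i)--(ii). Put $\rho_k^{(\0)}\triangleq\dim G_k^{(\0)}$ and $\kappa_k^{(\0)}\triangleq\rho_k^{(\0)}-\rho_{k-1}^{(\0)}$ with $\rho_{-1}^{(\0)}\triangleq0$, so $\kappa_0^{(\0)}=m$. Since the $G_k^{(\0)}$ are nested constant-rank subbundles, $Y\mapsto[g_0^{(\0)},Y]\bmod G_k^{(\0)}$ is a $C^{\infty}$-linear surjection $G_k^{(\0)}\to G_{k+1}^{(\0)}/G_k^{(\0)}$ vanishing on $G_{k-1}^{(\0)}$, whence $\kappa_0^{(\0)}\ge\kappa_1^{(\0)}\ge\cdots\ge0$. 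I would set $r_i\triangleq\#\{k\ge0 : \kappa_k^{(\0)}\ge i\}$, $i=1,\dots,m$, so that $\vert\mbf r\vert=\sum_k\kappa_k^{(\0)}=\rho_{k_\star^{(\0)}}^{(\0)}=N$, and work with the annihilators $\Omega_k\triangleq(G_k^{(\0)})^{\perp}$, a descending chain obeying $L_{g_0^{(\0)}}\Omega_{k+1}\subseteq\Omega_k$, which follows at once from $G_{k+1}^{(\0)}=G_k^{(\0)}+[g_0^{(\0)},G_k^{(\0)}]$. Starting from the smallest nonzero term $\Omega_{k_\star^{(\0)}-1}$, where the Frobenius theorem (applicable by involutivity and constant dimension) supplies a frame of exact forms, and completing at each smaller index the span of $\Omega_k$ by adjoining $L_{g_0^{(\0)}}$ of the forms already obtained, together with further exact forms, I would produce functions $h_1,\dots,h_m$ for which the $N$ functions $L_{g_0^{(\0)}}^{\ell}h_i$ ($0\le\ell\le r_i-1$) have independent differentials at the origin — hence form local coordinates $\xi_i^{(\ell)}$ on $X^{(\0)}$ — while $L_{g_j^{(\0)}}L_{g_0^{(\0)}}^{\ell}h_i=0$ for $0\le\ell\le r_i-2$ and the matrix $\big(L_{g_j^{(\0)}}L_{g_0^{(\0)}}^{r_i-1}h_i\big)_{1\le i,j\le m}$ is nonsingular at the origin. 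In these coordinates \eqref{prolong1sys:eq} reads $\ddt\xi_i^{(\ell)}=\xi_i^{(\ell+1)}$ for $\ell\le r_i-2$ and $\ddt\xi_i^{(r_i-1)}=b_i(\xi)+\sum_j c_{ij}(\xi)u_j^{(1)}$ with $(c_{ij})$ invertible, so the static feedback $u^{(1)}=(c_{ij})^{-1}(w-b)$ turns it into \eqref{lin-sys:eq}; hence \eqref{control-aff-sys:eq}, equivalently \eqref{nlsys:eq}, is feedback linearizable.

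\medskip

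\noindent\textbf{Expected main obstacle.} Practically all the work sits in the sufficiency construction. Two points are delicate: the combinatorial step that the jumps $\kappa_k^{(\0)}$ are nonincreasing, so that a genuine conjugate partition $\mbf r$ — and thus a well-posed Brunovsk\'y target — is obtained; and, above all, running the iterated Frobenius theorem so that the exact forms chosen to complete successive frames of the $\Omega_k$ keep every accumulated differential independent up to the full cotangent bundle, while the selected outputs $h_i$ really do annihilate the prescribed lower-order $g_j^{(\0)}$-derivatives. Keeping that bookkeeping coherent — in effect, constructing simultaneously compatible first integrals for the whole involutive flag — is the crux; the remaining steps (invariance of the flag, the computation on the linear model, the final normal-form identification) are routine.
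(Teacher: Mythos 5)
The paper recalls this theorem without proof, citing \cite{JR,hunt-et-al-ieee83}; your argument is precisely the classical one from those references (invariance of the flag $\{G_k^{(\0)}\}$ under diffeomorphism and feedback granted involutivity of the lower levels, necessity by computing the flag of the Brunovsk\'y model where $k_\star^{(\0)}=\max_i r_i-1\le n$, sufficiency via the conjugate partition of the nonincreasing jumps and the iterated Frobenius construction of outputs $h_i$), and its outline is correct. The one part you leave as a plan — the descending completion of exact coframes for the annihilators $\Omega_k$ so that all the differentials $dL_{g_0^{(\0)}}^{\ell}h_i$ stay independent — is indeed the technical core of the cited proofs, and you identify it accurately.
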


Note that, according to \eqref{prolong1vecf:eq} and \eqref{dist0:def}, $G_{0}^{(\0)} = \{\ddu{1}{0}, \ldots, \ddu{m}{0}\}$ is involutive with constant dimension, equal to $m$, by construction.

Theorem~\ref{feedlin-thm} provides a construction of flat outputs via Frobenius theorem (see \eg \cite{Ch}) and the list of the so-called \emph{Brunovsk\'{y}'s controllability indices}  \cite{Bru} as follows:
\begin{defn}\label{brunov-0:def}
Consider the sequence of integers 
$$\rho_{k}^{(\0)} \triangleq \dim  G_{k}^{(\0)} / G_{k-1}^{(\0)}  \quad \forall k\geq 1,  \qquad \rho_{0}^{(\0)} \triangleq \dim G_{0}^{(\0)} = m.$$
The Brunovsk\'{y} controllability indices $\kappa_{k}^{(\0)}$'s are defined by
$$\kappa_{k}^{(\0)} \triangleq \# \{ l \mid \rho_{l}^{(\0)} \geq k \}, \quad k= 1,\ldots, m,$$
where $\#A$ denotes the number of elements of an arbitrary set $A$.
\end{defn}

It can be proven (see \eg \cite{JR,hunt-et-al-ieee83,I-1sted,NvdS,L-GNL3}) that, for a feedback linearizable nonlinear system \eqref{nlsys:eq}, or \eqref{control-aff-sys:eq}, we have:
\begin{itemize}
\item the sequences $\{\rho_{k}^{(\0)}\}_{k\geq 0}$ and $\{\kappa_{k}^{(\0)}\}_{k\geq 0}$ are non increasing, 
\item $\rho_{k}^{(\0)}\leq m$  for all $k, \quad\rho_{k}^{(\0)}=0$, for all $k\geq k^{(\0)}_{\star}+1$,
\item $\kappa_{1}^{(\0)} = k^{(\0)}_{\star} + 1, \quad\kappa_{m}^{(\0)} \geq 1$, 
\item $\ds \sum_{k=0}^{k^{(\0)}_{\star}} \rho_{k}^{(\0)} = \sum_{k=1}^{m} \kappa_{k}^{(\0)} =  \dim G_{k^{(\0)}_{\star}}^{(\0)}  = n+m$.
\end{itemize}

The list $\kappa_{1}^{(\0)}, \ldots, \kappa_{m}^{(\0)}$ is uniquely defined up to input permutation, invariant by static state feedback and state diffeomorphism,  and is indeed equal to the list of controllability indices of the associated linear system \eqref{lin-sys:eq} with $r_{i}= \kappa_{i}^{(\0)}$, $i=1, \ldots, m$. 

Moreover, for all $k$ and all $i=1,\ldots, m$, and possibly up to a suitable input reordering, we have 
$$G_{k}^{(\0)} = \bigoplus_{j=1}^{m}  \{\ad_{g_{0}^{(\0)}}^{l}g_{j}^{(\0)} \mid l= 0, \ldots, k\lor (\kappa_{j}^{(\0)}-1)\}, \quad G_{\kappa_{1}^{(\0)}-1}^{(\0)} = G_{k^{(\0)}_{\star}}^{(\0)} = \T X^{(\0)}.$$

Then, flat outputs $(y_{1}, \ldots, y_{m})$ are locally non trivial solutions of the system of PDE's
\begin{equation}\label{PDEs0:eq}
\begin{aligned}
&L_{\ad_{g_{0}^{(\0)}}^{k}g_{j}^{(\0)}}\; y_{i} = 0, \; k= 0,\ldots, \kappa_{i}^{(\0)}-2, \; j=1, \ldots, m, \\
&\mathrm{with~}                           
L_{\ad_{g_{0}^{(\0)}}^{\kappa_{i}^{(\0)}-1}g_{i}^{(\0)}} \;y_{i} \neq 0,
\end{aligned}
\end{equation}
for $i=1, \ldots,m$, where we have denoted by $L_{\eta}\varphi$ the Lie derivative of a vector function $\varphi$ along the vector field $\eta$.
These solutions are such that the mapping
$$\ol{x}^{(\0)} \mapsto (y_{1}, \ldots, y_{1}^{(\kappa_{1}^{(\0)}-1)}, \ldots , y_{m}, \ldots, y_{m}^{(\kappa_{m}^{(\0)}-1)})$$
is a local diffeomorphism.

\begin{rem}\label{singleinputequiv:rem}
Recall from \cite{CLMscl} that, for single input systems, differential flatness and feedback linearizability are equivalent. 
\end{rem}

\section{System Pure Prolongation}\label{access:sec}
\subsection{Purely Prolonged Distributions}

We now introduce higher order prolongations of the vector fields defined by \eqref{prolong1vecf:eq}, called \emph{pure prolongations} \cite{CLM} (see also \cite{Sluis-scl93,SlT-scl96,BC-scl2004,FrFo-ejc2005}). 


Given a multi-integer $\mbf{j}\triangleq (j_{1},\ldots, j_{m}) \in \N^{m}$, we note, as before, $\vert \mbf{j} \vert \triangleq \sum_{i=1}^{m} j_{i}$ and the prolonged state:
$$\ol{x}^{(\mbf{j})} \triangleq (x, \ol{u}^{(\mbf{j})}) \triangleq (x_{1}, \ldots, x_{n}, u_{1}^{(0)}, \ldots, u_{1}^{(j_{1})}, \ldots, u_{m}^{(0)}, \ldots, u_{m}^{(j_{m})}),$$ 
with $u_{i}^{(0)} \triangleq u_{i}$, $i=1, \ldots, m$.

Let $X^{(\mbf{j})} \triangleq X\times \R^{m+\vert \mbf{j}\vert}$ be the associated $\mbf{j}$-th order jet manifold  of dimension $n + m+ \vert \mbf{j} \vert$ with coordinates $\ol{x}^{(\mbf{\mbf{j}})}$. 

The pure prolongation of order $\mbf{j}$ of system \eqref{control-aff-sys:eq}, or otherwise said, of the vector fields \eqref{prolong1vecf:eq}, in the tangent bundle $\T X^{(\mbf{j})} = \T X\times \T\R^{m+\vert\mbf{j} \vert}$, is defined by
\begin{equation}\label{f_0-j:def}
\begin{aligned}
g_{0}^{(\mbf{j})}(\ol{x}^{(\mbf{j})})&= \sum_{i=1}^{n} f_{i}(x,u) \frac{\partial}{\partial x_{i}} + \sum_{i=1}^{m}\sum_{k=0}^{j_{i}-1} u_{i}^{(k+1)} \frac{\partial}{\partial u_{i}^{(k)}}\\
g_{i}^{(\mbf{j})}(\ol{x}^{(\mbf{j})})&\triangleq g_{i}^{(j_{i})}(\ol{x}^{(\mbf{j})}) \triangleq  \frac{\partial}{\partial u_{i}^{(j_{i})}}, \quad i=1,\ldots,m,
\end{aligned}
\end{equation}
with the convention that $\sum_{k=0}^{j_{i}-1} u_{i}^{(k+1)} \frac{\partial}{\partial u_{i}^{(k)}} \triangleq 0$ if $j_{i} =0$.

They are naturally associated to the adjunction of $\mbf{j}$ pure integrators to $u=u^{(0)}$ in \eqref{control-aff-sys:eq} (with the same usual abuse of notations as in \eqref{prolong1sys:eq}):
\begin{equation}\label{nlsys(j):def}
\dot{\ol{x}}^{(\mbf{j})} = g_{0}^{(\mbf{j})}(\ol{x}^{(\mbf{j})}) + \sum_{i=1}^{m} u_{i}^{(j_{i}+1)} g_{i}^{(j_{i})}(\ol{x}^{(\mbf{j})})
\end{equation} 
or $$\dot{x}=f(x,u), \qquad \dot{u}_{i}^{(k)} = u_{i}^{(k+1)}, \quad k= 0,\ldots, j_{i}, ~~i=1,\ldots, m,$$
$u^{(\mbf{j+1})} = ( u_{1}^{(j_{1}+1)}, \ldots, u_{m}^{(j_{m}+1)})$ being the new control vector of this \emph{purely prolonged system}, whose state is $( x, \ol{u}^{(\mbf{j})}) = \ol{x}^{(\mbf{j})}$.

\begin{rem}
Note that the state of the $\mbf{j}$-th prolonged system, $\ol{x}^{(\mbf{j})}$, coincides with the image of $\ol{x}$ by the projection $p_{\mbf{j}} : \ol{x} \in X\times \R^{m}_{\infty} \mapsto p_{\mbf{j}}(\ol{x}) = \ol{x}^{(\mbf{j})} \in X^{(\mbf{j})}$ for all $\mbf{j}$. 
In addition, the family of projections 
$p_{\mbf{i},\mbf{j}} : \ol{x}^{(\mbf{i})}\in X^{(\mbf{i})} \mapsto \ol{x}^{(\mbf{j})}=
p_{\mbf{i},\mbf{j}}(\ol{x}^{(\mbf{i})}) \in X^{(\mbf{j})}$ for all $\mbf{i}, \mbf{j}$ such that $i_{k}\geq j_{k}$ for all $k=1,\ldots,m$, that we note $\mbf{i}\succeq \mbf{j}$, indeed satisfies $p_{\mbf{i},\mbf{j}} \circ  p_{\mbf{j},\mbf{k}} = p_{\mbf{i},\mbf{k}}$ for all $\mbf{i} \succeq \mbf{j} \succeq \mbf{k}$ and thus allows us to identify the manifold $X\times \R^{m}_{\infty}$ with the \emph{projective limit} of the family $(X^{(\mbf{i})}, p_{\mbf{i},\mbf{j}})$ for all $\mbf{i}$ and all $\mbf{j}$ such that $\mbf{i} \succeq \mbf{j}$, \ie $X\times \R^{m}_{\infty} \simeq \ds\lim_{\leftarrow}X^{(\mbf{i})}$ (see \eg \cite[Chap. I,\S 10]{Bbk}). A similar identification trivially holds for the associated tangent bundles, \ie 
$\T X\times \T\R^{m}_{\infty} \simeq \ds\lim_{\leftarrow}\T X^{(\mbf{i})}$ relatively to the family $\T p_{\mbf{i},\mbf{j}}$ of tangent projections, hence the identification of the Cartan field $C_f$, defined by \eqref{Cf:def}, with $\ds \lim_{\leftarrow} g_{0}^{\mbf{(j)}}$, the projective limit of the vector fields $g_{0}^{\mbf{(j)}}$. 
Nevertheless, this property does not hold for the control vector fields $g_{i}^{\mbf{(j)}}$ since $\T p_{\mbf{j},\mbf{k}}( g_{i}^{\mbf{(j)}})$ is not equal to $g_{i}^{\mbf{(k)}}$, for $i=1, \ldots, m$ and $\mbf{j} \succeq \mbf{k}$. Moreover, the Lie bracket of vector fields is not preserved by this family of projections. This is one of the reasons why prolongations may enlarge the range of the system transformations.
\end{rem}

\begin{rem}
Given an arbitrary point $\ol{x}_{0} \triangleq (x_{0},\ol{u}_{0})$ around which system \eqref{control-aff-sys:eq} is defined, it is convenient to consider the shift 
$$\theta : (x,\ol{u})\in X\times \R^{m}_{\infty}  \mapsto \theta(x,\ol{u}) = (x-x_{0}, \ol{(u-u_{0})}) \triangleq (z,\ol{v})\in X\times \R^{m}_{\infty}$$ 
such that $\ol{x}_{0}$ is mapped to the origin of $\T X \times \T\R^{m}_{\infty}$, denoted by $\ol{0}$, thus inducing the shift of vector fields: 
\begin{equation}\label{theta:def}
\theta_{\star}(g_{i}^{(\mbf{j})})(z,\ol{v}) \triangleq g_{i}^{(\mbf{j})}(z+x_{0},\ol{(v + u_{0})}), \quad i=0, \ldots m,
\end{equation} 
now defined in a neighborhood of $\ol{0}$.
For the sake of simplicity, we will only consider such shifted vector fields in the sequel while keeping the same notation $g_{i}^{(\mbf{j})}$ as before, though abusive, but yet unambiguous.
\end{rem}

We now introduce the following filtration of $\mbf{j}$-th order purely prolonged distributions of $\T X^{(\mbf{j})}$:
\begin{equation}\label{distj:def}
G_{0}^{(\mbf{j})} \triangleq \{\ddu{1}{j_{1}}, \ldots, \ddu{m}{j_{m}}\}, \qquad G_{k+1}^{(\mbf{j})} \triangleq G_{k}^{(\mbf{j})} + \ad_{g_{0}^{(\mbf{j})}} G_{k}^{(\mbf{j})}, \quad \forall k\geq 0
\end{equation}

Indeed, for $\mbf{j} = \mbf{0}$, \ie $j_1= \cdots = j_m=0$, this filtration coincides with the $\0$th order one given by \eqref{dist0:def}.
Similarly to the $\0$th order case, $G_{0}^{(\bj)}$ is involutive with constant dimension, equal to $m$, by construction. 

Moreover, since every $G_{k}^{(\mbf{j})}\subset \T X^{(\mbf{j})}$, with $\dim\T X^{(\mbf{j})} = n + m + \mid \bj \mid$, we have
\begin{prop}\label{kstar-j:prop}
There exists a finite integer $k^{(\mbf{j})}_{\star}$ such that  $G_{k}^{(\mbf{j})} = G_{k^{(\mbf{j})}_{\star}}^{(\mbf{j})}$ for all $k\geq k^{(\mbf{j})}_{\star}$ and
\begin{equation}\label{ineq:kstar-j}
k^{(\bj)}_{\star} \leq n+\mid \bj \mid.
\end{equation}
\end{prop}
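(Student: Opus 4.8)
The plan is to show that the non-decreasing chain $G_0^{(\bj)}\subseteq G_1^{(\bj)}\subseteq\cdots\subseteq\T X^{(\bj)}$ becomes stationary as soon as two consecutive terms coincide, and then to bound the first such index by a dimension count in the $(n+m+\vert\bj\vert)$-dimensional ambient tangent bundle.

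First I would record that the filtration is increasing, which is immediate from $G_{k+1}^{(\bj)}=G_k^{(\bj)}+\ad_{g_0^{(\bj)}}G_k^{(\bj)}\supseteq G_k^{(\bj)}$, already noted below \eqref{distj:def}. The core step is the propagation property: if $G_{k_0}^{(\bj)}=G_{k_0+1}^{(\bj)}$ for some $k_0$, then $G_k^{(\bj)}=G_{k_0}^{(\bj)}$ for every $k\ge k_0$. Indeed, the equality $G_{k_0}^{(\bj)}=G_{k_0}^{(\bj)}+\ad_{g_0^{(\bj)}}G_{k_0}^{(\bj)}$ is equivalent to $\ad_{g_0^{(\bj)}}G_{k_0}^{(\bj)}\subseteq G_{k_0}^{(\bj)}$, and substituting this into the defining recursion gives $G_{k_0+2}^{(\bj)}=G_{k_0+1}^{(\bj)}+\ad_{g_0^{(\bj)}}G_{k_0+1}^{(\bj)}=G_{k_0}^{(\bj)}+\ad_{g_0^{(\bj)}}G_{k_0}^{(\bj)}=G_{k_0}^{(\bj)}$; an obvious induction then closes the argument. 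At the level of generating families this is transparent: $G_{k_0}^{(\bj)}=G_{k_0+1}^{(\bj)}$ says exactly that each $\ad_{g_0^{(\bj)}}^{k_0+1}g_i^{(\bj)}$ lies in $G_{k_0}^{(\bj)}$, say $\ad_{g_0^{(\bj)}}^{k_0+1}g_i^{(\bj)}=\sum_{i',\,l\le k_0}\varphi_{i'l}\,\ad_{g_0^{(\bj)}}^{l}g_{i'}^{(\bj)}$ locally, and applying $\ad_{g_0^{(\bj)}}$ once more with the Leibniz identity $\ad_{g_0^{(\bj)}}(\varphi\eta)=(L_{g_0^{(\bj)}}\varphi)\eta+\varphi\,\ad_{g_0^{(\bj)}}\eta$ keeps $\ad_{g_0^{(\bj)}}^{k_0+2}g_i^{(\bj)}$ inside $G_{k_0}^{(\bj)}$, since every bracket produced has order $\le k_0+1$.

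Next I would set $k^{(\bj)}_\star$ to be the smallest $k$ with $G_k^{(\bj)}=G_{k+1}^{(\bj)}$; by the propagation property, once this index is shown to exist it enjoys exactly the stabilization stated, and for every $k<k^{(\bj)}_\star$ the inclusion $G_k^{(\bj)}\subsetneq G_{k+1}^{(\bj)}$ is strict. Existence together with the bound then follows from a dimension count: $G_0^{(\bj)}$ has constant dimension $m$, each $G_k^{(\bj)}$ is a subdistribution of $\T X^{(\bj)}$, of dimension $n+m+\vert\bj\vert$, and along a strictly increasing chain of subdistributions the rank must jump by at least one at each step; hence the chain $G_0^{(\bj)}\subsetneq\cdots\subsetneq G_{k^{(\bj)}_\star}^{(\bj)}$ has at most $(n+m+\vert\bj\vert)-m=n+\vert\bj\vert$ strict steps, i.e. $k^{(\bj)}_\star\le n+\vert\bj\vert$.

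The only place that calls for a word of care — and the step I expect to be the single genuine subtlety — is this dimension count, since a priori the $G_k^{(\bj)}$ need not have globally constant rank; as is standard in feedback-linearization theory one works in a neighborhood of a point at which all the $G_k^{(\bj)}$ attain their locally constant (generic) rank, so that strict inclusion of distributions is equivalent to a strict increase of rank there. The propagation mechanism itself is purely algebraic and requires no regularity hypothesis.
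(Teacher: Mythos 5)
Your proof is correct and follows essentially the same route as the paper, whose argument is exactly the dimension count $n+m+\vert\mbf{j}\vert \geq \dim G_{k_{\star}^{(\mbf{j})}}^{(\mbf{j})} = \sum_{k=1}^{k_{\star}^{(\mbf{j})}}\dim G_{k}^{(\mbf{j})}/G_{k-1}^{(\mbf{j})} + m \geq k_{\star}^{(\mbf{j})}+m$. The only difference is that you spell out the stabilization (propagation) step and the constant-rank caveat, which the paper leaves implicit; both additions are sound.
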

\begin{proof}
Since 
$$n+m+ \mid \bj \mid \geq \dim G_{k^{(\bj)}_{\star}}^{(\bj)} = \sum_{k=1}^{k^{(\bj)}_{\star}} \dim G_{k}^{(\bj)} /  G_{k-1}^{(\bj)} + \dim G_{0}^{(\bj)} \geq k^{(\bj)}_{\star} + m,$$ we get \eqref{ineq:kstar-j}.
\end{proof}

\begin{rem}
In full generality, $k^{(\bj)}_{\star}$ depends on the point where it is evaluated. However, if $\dim G_{k}^{(\bj)}$ is constant in an open dense subset of $X^{(\mbf{j})}$ for all large enough $k$, so is $k^{(\bj)}_{\star}$.
\end{rem}


Let us inductively define the $n$-dimensional vector functions $\gamma_{k,i}^{(\mbf{j})}$,
for $k\geq 1$, $i=1,\ldots,m$, and arbitrary $\bj = (j_{1}, \ldots, j_{m})$ as follows:
\begin{equation}\label{gamm-k-j:eq}
\begin{aligned}
\gamma_{k+1,i}^{(\mbf{j})} &\triangleq L_{g_{0}^{(\mbf{j})}}\gamma_{k,i}^{(\mbf{j})}
-\gamma_{k,i}^{(\mbf{j})} \frac{\partial f}{\partial x}
=L_{g_{0}^{(\0)}}\gamma_{k,i}^{(\mbf{j})} + \sum_{p=1}^{m}\sum_{l=0}^{j_{p}-1} u_{p}^{(l+1)}\frac{\partial \gamma_{k,i}^{(\mbf{j})}}{\partial u_{p}^{(l)}}
- \gamma_{k,i}^{(\mbf{j})} \frac{\partial f}{\partial x}
\end{aligned}
\end{equation}
with
\begin{equation}\label{gamm-j-j:eq}
\gamma_{1,i}^{(\mbf{j})}= (-1)^{(j_{i}+1)} \frac{\partial f}{\partial u_{i}^{(0)}}.
\end{equation} 

For an arbitrary $\mbf{j}$ and given $i=1,\ldots,m$, thanks to \eqref{gamm-j-j:eq}, it is readily seen that $\gamma_{1,i}^{(\mbf{j})}$ depends at most of $\ol{x}^{(\0)}$ and thus, if $k \leq j_{i}-1$, thanks to \eqref{gamm-k-j:eq},
$\gamma_{k+1,i}^{(\mbf{j})}$ depends at most of $\ol{x}^{(\mbf{j}\bigvee k)}$.

\bigskip
\subsection{Vertical Distributions of Purely Prolonged Ones}

Before stating the next Lemma, we need to recall the definition of \emph{vertical bundle}.  Given an arbitrary $\mbf{r} \in \N^{m}$ and the fiber bundle $\pi_{\mbf{r}}: X^{(\mbf{r})} \rightarrow \R^{m+\mid\mbf{r}\mid}$, with $\pi_{\mbf{r}}(\ol{x}^{(\mbf{r})})= \ol{u}^{(\mbf{r})}$, its vertical space at $\ol{x}^{(\mbf{r})}$, denoted by $\V_{\ol{x}^{(\mbf{r})}} X^{(\mbf{r})}$,  is the tangent space $\T_{x} X$. Its vertical bundle, denoted by $\V X^{(\mbf{r})}$, is the vector bundle made of the vertical spaces at each $\ol{x}^{(\mbf{r})}$,  \ie the set of linear combinations $\sum_{i=1}^{n} \alpha_{i}(\ol{x}^{(\mbf{r})}) \frac{\partial}{\partial x_{i}}$ whose coefficients $\alpha_{i}$ are smooth functions that depend at most on  $\ol{x}^{(\mbf{r})}$ and where $(x_{1}, \ldots, x_{n})$ are local coordinates of $X$.

The same definition indeed holds for the vertical bundle $\V (X\times \R^{m}_{\infty})$ associated to the fiber bundle $\pi: X\times \R^{m}_{\infty} \rightarrow \R^{m}_{\infty}$, \ie the set of linear combinations of $\frac{\partial}{\partial x_{1}}, \ldots, \frac{\partial}{\partial x_{n}}$ whose coefficients are smooth functions of $\ol{x}$.

We now establish some comparison \textsl{formulae} between Lie brackets of the vector fields of the purely prolonged system and those of the original (non prolonged) one. More complete \textsl{formulae} may be found in Lemma~\ref{prepa1:lem} of the Appendix~\ref{annex:compar:sec}.

\begin{lem}\label{prepa2:lem}
For all $\mbf{j}= \left( j_{1}, \ldots,j_{m}\right)\in \N^{m}$ satisfying $0\leq  j_{1} \leq \ldots \leq j_{m}$, for all $k \leq j_{i}$ and $i=1,\ldots,m$,  we have:
\begin{equation}\label{adkCf0-gij-:eq}
\ad_{g_{0}^{(\mbf{j})}}^{k} \ddu{i}{j_{i}} = 
(-1)^k  \frac{\partial}{\partial u_{i}^{(j_{i}-k)}} 
\end{equation}
and for all $k\geq 1 $:
\begin{equation}\label{ad-gamm-k-j:eq}
\ad_{g_{0}^{(\mbf{j})}}^{j_{i}+k} \ddu{i}{j_{i}} = (-1)^{j_{i}}  \ad_{g_{0}^{(\bj)}}^{k} \ddu{i}{0} =
\gamma_{k,i}^{(\mbf{j})}\frac{\partial}{\partial x} \in \V X^{(\mbf{j}\bigvee (k-1))}, 
\end{equation}
Moreover, we have
\begin{equation}\label{0brack:eq}
\left[ \ddu{p}{j_{p}-k}, \ad_{g_{0}^{(\mbf{j})}}^{l-j_{q}}\ddu{q}{0} \right] = 0, \quad \forall k < j_{p}, \;\; \forall l\geq j_{q} ~\mathrm{s.t.~} k+l < j_{p} + j_{q} +1.
\end{equation}
\end{lem}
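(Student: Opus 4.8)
The plan is to prove all three formulae by induction on the bracket order, working directly from the explicit expression \eqref{f_0-j:def} of $g_0^{(\mbf{j})}$ and from the elementary remark that a coordinate vector field $\ddu{i}{k}$ has constant components in the chart $\ol{x}^{(\mbf{j})}$, so that $[\ddu{i}{k},V]$ is obtained by differentiating the components of $V$ with respect to $u_i^{(k)}$. The ordering hypothesis on $\mbf{j}$ is not actually used below and is retained only for coherence with the rest of the paper.

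I would first settle \eqref{adkCf0-gij-:eq} by induction on $k$, $0\le k\le j_i$, the case $k=0$ being trivial. Assume $\ad_{g_0^{(\mbf{j})}}^{k-1}\ddu{i}{j_i}=(-1)^{k-1}\ddu{i}{j_i-k+1}$ with $1\le k\le j_i$; then $j_i-k+1\ge 1$, so in $g_0^{(\mbf{j})}$ the variable $u_i^{(j_i-k+1)}$ occurs only in the chain term $u_i^{(j_i-k+1)}\ddu{i}{j_i-k}$, whence $[\ddu{i}{j_i-k+1},g_0^{(\mbf{j})}]=\ddu{i}{j_i-k}$ and $\ad_{g_0^{(\mbf{j})}}^{k}\ddu{i}{j_i}=(-1)^{k}\ddu{i}{j_i-k}$, closing the induction. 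Setting $k=j_i$ gives $\ad_{g_0^{(\mbf{j})}}^{j_i}\ddu{i}{j_i}=(-1)^{j_i}\ddu{i}{0}$; applying $\ad_{g_0^{(\mbf{j})}}^{k}$ to both sides and using $\R$-linearity of $\ad_{g_0^{(\mbf{j})}}$ produces the first equality in \eqref{ad-gamm-k-j:eq}.

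For the second equality in \eqref{ad-gamm-k-j:eq} I would show, by induction on $k\ge 1$, that $\ad_{g_0^{(\mbf{j})}}^{k}\ddu{i}{0}=(-1)^{j_i}\gamma_{k,i}^{(\mbf{j})}\frac{\partial}{\partial x}$, which is equivalent to the stated identity since $(-1)^{2j_i}=1$. For $k=1$, only the $f$-components of $g_0^{(\mbf{j})}$ depend on $u_i^{(0)}$, so $\ad_{g_0^{(\mbf{j})}}\ddu{i}{0}=-\frac{\partial f}{\partial u_i^{(0)}}\frac{\partial}{\partial x}=(-1)^{j_i}\gamma_{1,i}^{(\mbf{j})}\frac{\partial}{\partial x}$ by \eqref{gamm-j-j:eq}. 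For the step, the Leibniz rule $[g_0^{(\mbf{j})},a\,\ddxx{\ell}]=(L_{g_0^{(\mbf{j})}}a)\,\ddxx{\ell}+a\,[g_0^{(\mbf{j})},\ddxx{\ell}]$ together with $[g_0^{(\mbf{j})},\ddxx{\ell}]=-\sum_{\ell'}\frac{\partial f_{\ell'}}{\partial x_{\ell}}\ddxx{\ell'}$ (only the $f$-components of $g_0^{(\mbf{j})}$ depend on $x$) shows that $[g_0^{(\mbf{j})},\gamma_{k,i}^{(\mbf{j})}\frac{\partial}{\partial x}]$ is, componentwise, the right-hand side of the recursion \eqref{gamm-k-j:eq} evaluated at $\gamma_{k,i}^{(\mbf{j})}$, hence equals $\gamma_{k+1,i}^{(\mbf{j})}\frac{\partial}{\partial x}$, closing the induction. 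Finally, $\gamma_{k,i}^{(\mbf{j})}\frac{\partial}{\partial x}\in\V X^{(\mbf{j}\bigvee(k-1))}$ follows from the dependence property recorded after \eqref{gamm-j-j:eq}, \ie that $\gamma_{k,i}^{(\mbf{j})}$ depends at most on $\ol{x}^{(\mbf{j}\bigvee(k-1))}$; I would re-prove it by the same induction, noting that \eqref{gamm-k-j:eq} introduces only the variables $u_p^{(l+1)}$ with $l\le(j_p\vee(k-1))-1$ or $l\le j_p-1$, all of order $\le j_p\vee k$.

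It remains to prove \eqref{0brack:eq}, which I would do by cases. If $l=j_q$ then $\ad_{g_0^{(\mbf{j})}}^{l-j_q}\ddu{q}{0}=\ddu{q}{0}$ is a coordinate field, so its bracket with $\ddu{p}{j_p-k}$ is zero. If $l>j_q$ then \eqref{ad-gamm-k-j:eq} gives $\ad_{g_0^{(\mbf{j})}}^{l-j_q}\ddu{q}{0}=(-1)^{j_q}\gamma_{l-j_q,q}^{(\mbf{j})}\frac{\partial}{\partial x}$, whose coefficients depend at most on $\ol{x}^{(\mbf{j}\bigvee(l-j_q-1))}$; since $k\ge 0$ and $k+l<j_p+j_q+1$ force $l-j_q-1<j_p$ and $j_p-k>l-j_q-1$, \ie $j_p-k>j_p\vee(l-j_q-1)$, the variable $u_p^{(j_p-k)}$ does not appear among those coefficients, so the bracket with the constant-component field $\ddu{p}{j_p-k}$ vanishes. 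I expect the main obstacle to be purely clerical: keeping the signs $(-1)^{j_i}$ consistent through the two inductions, checking that the Leibniz computation reproduces \eqref{gamm-k-j:eq} with the paper's orientation of $\frac{\partial f}{\partial x}$, and verifying that the order constraint in \eqref{0brack:eq} is exactly the support estimate $j_p-k>j_p\vee(l-j_q-1)$ used above.
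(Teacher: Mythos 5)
Your proof is correct and follows essentially the same route as the paper's: direct computation of the chain-of-integrators brackets for \eqref{adkCf0-gij-:eq}, an induction identifying $\ad_{g_{0}^{(\mbf{j})}}^{j_{i}+k}\ddu{i}{j_{i}}$ with $\gamma_{k,i}^{(\mbf{j})}\frac{\partial}{\partial x}$ via the recursion \eqref{gamm-k-j:eq} together with the dependency estimate on $\ol{x}^{(\mbf{j}\bigvee(k-1))}$, and the same support argument for \eqref{0brack:eq}. The only (harmless) differences are that you treat the degenerate case $l=j_{q}$ of \eqref{0brack:eq} explicitly and spell out the Leibniz computation that the paper leaves implicit.
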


\begin{proof}
It is immediately seen that
$$
 \ad_{g_{0}^{(\mbf{j})}} \ddu{i}{j_{i}} = \left[ f \frac{\partial}{\partial x} + \sum_{k=1}^{m}\sum_{l\geq 0} u_{k}^{(l+1)} \frac{\partial}{\partial u_{k}^{(l)}} , \frac{\partial}{\partial u_{i}^{(j_{i})}}\right] 
 = - \frac{\partial}{\partial u_{i}^{(j_{i-1})}}.
$$ 
Iterating this computation up to $k= j_{i}$ yields \eqref{adkCf0-gij-:eq}:
$$
\ad^{j_{i}}_{g_{0}^{(\mbf{j})}} \ddu{i}{j_{i}} = (-1)^{j_{i}} \frac{\partial}{\partial u_{i}^{(0)}} 
.$$

Then, for $k=j_{i}+1$, using the fact that $[\frac{\partial}{\partial u_{k}^{(l)}} , \frac{\partial}{\partial u_{i}^{(0)}}] =0$ for all $i$, $k$ and $l\geq 0$, we have:
\begin{equation}\label{gam-kij:eq}
\begin{aligned}
\ad_{g_{0}^{(\mbf{j})}}^{j_{i}+1} \ddu{i}{j_{i}} &= (-1)^{j_{i}} \left[ f \frac{\partial}{\partial x} + \sum_{k=1}^{m}\sum_{l=0}^{j_{k}-1} u_{k}^{(l+1)} \frac{\partial}{\partial u_{k}^{(l)}} , \frac{\partial}{\partial u_{i}^{(0)}}\right]\\
&= (-1)^{j_{i}} \ad_{g_{0}^{(\mbf{0})}} \ddu{i}{0} =
(-1)^{(j_{i}+1)} \frac{\partial f}{\partial u_{i}^{(0)}}(\ol{x}^{(\0)})\frac{\partial}{\partial x} = \gamma_{1,i}^{(\mbf{j})}\frac{\partial}{\partial x},
\end{aligned}
\end{equation}
which proves that $\ad_{g_{0}^{(\mbf{j})}}^{j_{i}+1} \ddu{i}{j_{i}} \in \V X^{(\0)}$ and that \eqref{ad-gamm-k-j:eq} holds at the order $k=1$.

 Assuming that \eqref{ad-gamm-k-j:eq} holds up to $k=\nu$, with $\gamma_{\nu,i}^{(\mbf{j})}$ depending at most on $\ol{x}^{(\mbf{j}\lor (\nu -1))}$, we have
$$\ad_{g_{0}^{(\mbf{j})}}^{j_{i}+\nu+1} \ddu{i}{j_{i}} = (-1)^{j_{i}}  \left[g_{0}^{(\mbf{j})}, \ad_{g_{0}^{(\mbf{j})}}^{\nu} \ddu{i}{0}\right]= 
\left(L_{g_{0}^{(\mbf{j})}}\gamma_{\nu,i}^{(\mbf{j})} - \gamma_{\nu,i}^{(\mbf{j})}\frac{\partial f}{\partial x}\right) \frac{\partial}{\partial x} = \gamma_{\nu+1,i}^{(\mbf{j})} \frac{\partial}{\partial x},
$$
and, according to \eqref{gamm-k-j:eq}-\eqref{gamm-j-j:eq}, for all $j_{p}+1\leq \nu < j_{p+1}$, $p=1,\ldots,m-1$, since we differentiate $\gamma_{\nu,i}^{(\mbf{j})}$ with respect to $\sum_{k=1}^{m}\sum_{l=0}^{j_{k}-1} u_{k}^{(l+1)} \frac{\partial}{\partial u_{k}^{(l)}}$, it is immediate to verify that $\gamma_{\nu+1,i}^{(\mbf{j})}$ depends at most on 
$$\ol{x}^{(\mbf{j}\bigvee \nu)}= \left(x, u_{1}^{(0)},\ldots, u_{1}^{(j_{1})}, \ldots, u_{p}^{(0)}, \ldots, u_{p}^{(j_{p})}, u_{p+1}^{(0)}, \ldots, u_{p+1}^{(\nu)} , \ldots, u_{m}^{(0)}, \ldots, u_{m}^{(\nu)}  \right)
$$ 
and on $\ol{x}^{(\mbf{j})} = \ol{x}^{(\mbf{j}\bigvee \nu)}$ if $\nu \geq j_{m}$, hence \eqref{ad-gamm-k-j:eq}.

Concerning \eqref{0brack:eq},
since $\ad_{g_{0}^{(\mbf{j})}}^{l-j_{q}}\ddu{q}{0} = \pm \gamma_{l-j_{q},q}^{(\mbf{j})} \ddx$ depends at most on $\ol{x}^{(\mbf{j}\bigvee (l-j_{q}-1))}$, the derivative of  $\gamma_{l-j_{q},q}^{(\mbf{j})}$ with respect to $\ddu{p}{j_{p}-k}$ is indeed null if $j_{p}-k > l-j_{q}-1$, hence the result.
\end{proof}

\subsection{Decomposition of Purely Prolonged Distributions}

Let us assume, without loss of generality, that the control components have been reordered in such a way that $j_{1}\leq j_{2} \leq \cdots \leq j_{m}$. Moreover, we may suppose that $j_{1}=0$, as shown to be sufficient in the next section.

We now introduce two new filtrations of $\T X^{(\bj)}$, noted $\Gamma_{k}^{(\mbf{j})}$ and $\Delta_{k}^{(\mbf{j})}$, for $k\geq 0$, as follows
\begin{equation}\label{Gammadef:eq}
\Gamma_{k}^{(\mbf{j})} 
\triangleq  \bigoplus_{p=2}^{m}\left\{\ddu{p}{j_{p}-l} \mid l= 0, \ldots, k\vee(j_{p}-1) \right\} ,
\end{equation}
\begin{equation}\label{Deltadef:eq}
\Delta_{k}^{(\mbf{j})} \triangleq \sum_{p=1}^{m} \left\{ \ad_{g_{0}^{(\mbf{j})}}^{l-j_{p}}\ddu{p}{0}  \mid l= j_{p}, \ldots, k \right\}.
\end{equation}
with the convention that $\ad_{g_{0}^{(\mbf{j})}}^{k-j_{p}}\ddu{p}{0} = 0$ if $k< j_{p}$, $p=1, \ldots, m$.

We indeed have $\Gamma_{k}^{(\mbf{j})} = \Gamma_{j_{m}-1}^{(\mbf{j})}= \bigoplus_{p=2}^{m}\{\ddu{p}{j_{p}}, \ldots, \ddu{p}{1}\}$ 
for all $k\geq j_{m}-1$. Thus,  $\dim \Gamma_{k}^{(\mbf{j})} \leq \vert \mbf{j}\vert$ for all $k$.

The definitions \eqref{Gammadef:eq}--\eqref{Deltadef:eq} and Lemma~\ref{prepa2:lem} readily yield, for all $k$, $l$ and $\mbf{j}$,
\begin{equation}\label{Gammaprop:eq}
\begin{aligned}
&\ol{\Gamma_{k}^{(\mbf{j})}}= \Gamma_{k}^{(\mbf{j})}, \qquad
[\Gamma_{k}^{(\mbf{j})}, \Gamma_{l}^{(\mbf{j})}] \subset \Gamma_{k\wedge l}^{(\mbf{j})}, \\
&\Delta_{l}^{(\mbf{j})} \subset \V (X\times \R^{m}_{\infty}), \quad
[\Gamma_{k}^{(\mbf{j})}, \Delta_{l}^{(\mbf{j})}]\subset \V (X\times \R^{m}_{\infty}), \quad [\Gamma_{k}^{(\mbf{j})}, \Delta_{l}^{(\mbf{j})}]\cap \Gamma_{k}^{(\mbf{j})} = \{ 0 \}.
\end{aligned}
\end{equation} 

\begin{rem}\label{nondecrease:rem}
Note that, unlike the filtration $\{G_{k}^{(\mbf{j})}\}_{0\leq k \leq k^{(\mbf{j})}_{\star}}$ that is increasing for $k\leq k_{\star}^{(\mbf{j})}$, the mapping $k\mapsto \dim \Delta_{k}^{(\mbf{j})}$ is only non-decreasing in general.
\end{rem}
\begin{rem}
In our definition of $\Delta_{k}^{(\mbf{j})}$, we consider Lie brackets of the form $\ad_{g_{0}^{(\mbf{j})}}^{k}\ddu{p}{0}$, as opposed to \cite{BC-scl2004} where Lie brackets of the form $\ad_{g_{0}^{(\0)}}^{k}\ddu{p}{0}$ are used.
\end{rem}

\begin{prop}\label{G-k-j:prop}
For all $\mbf{j}$ such that  $0= j_{1}\leq \ldots \leq j_{m-1} \leq j_{m}$, with $j_{m}$ finite, if $\mbf{j}= (j_{1}, \ldots, j_{i},j_{i+1}, \ldots, j_{m})$ and $\mbf{j'}= (j_{1}, \ldots, j_{i},j'_{i+1}, \ldots, j'_{m})$ for some $(j'_{i+1}, \ldots, j'_{m})$, we have $\Delta_{k}^{(\mbf{j})} =\Delta_{k}^{(\mbf{j'})}$ for all $k = 0, \ldots, (j_{i+1}\vee j'_{i+1}) -1$.

Moreover, for all $k \geq 0$, 
\begin{equation}\label{dimGamma:eq}
\dim \Gamma_{k}^{(\mbf{j})} = \left\{ 
\begin{array}{ll}
\ds \sum_{p=1}^{i} j_{p} + (k+1)(m-i)& \ds \mathrm{if~} j_{i}\leq k < j_{i+1}, \; i=1,\ldots, m-1\\
\ds \mid \mbf{j} \mid & \ds \mathrm{if~} k\geq j_{m}
\end{array}\right.
\end{equation}
\begin{equation}\label{dimDelta:eq}
\dim \Delta_{k}^{(\mbf{j})} \leq \left\{ 
\begin{array}{ll}
\ds ( (k+1)i - \sum_{p=1}^{i} j_{p} ) \vee (n+i) & \ds \mathrm{if~} j_{i}\leq k < j_{i+1}, \; i=1,\ldots, m-1\\
\ds \left( (k+1)m -\mid \mbf{j} \mid \right) \vee (n+m) & \ds \mathrm{if~} k\geq j_{m}
\end{array}\right.
\end{equation}
and we have
\begin{equation}\label{G-k-j:eq}
G_{k}^{(\mbf{j})} =  \Gamma_{k}^{(\mbf{j})} \oplus \Delta_{k}^{(\mbf{j})}, \quad \forall k\geq 0.
\end{equation}
Furthermore, the finite integer $k_{\star}^{(\mbf{j})}$, satisfying \eqref{ineq:kstar-j}, is such that $\Delta_{k}^{(\mbf{j})} = 
\Delta_{k_{\star}^{(\mbf{j})}}^{(\mbf{j})}$ and $\Gamma_{k}^{(\mbf{j})} = 
\Gamma_{k_{\star}^{(\mbf{j})}}^{(\mbf{j})}$ for all $k\geq k_{\star}^{(\mbf{j})}$.

If, in addition, $\dim \Delta_{k_{\star}^{(\mbf{j})}}^{(\mbf{j})} = m+n$, then
\begin{equation}\label{ineq-j:eq}
n+\mid \bj \mid \geq k_{\star}^{(\mbf{j})} \geq  j_{m}\wedge \frac{n + \mid \mbf{j}\mid}{m}.
\end{equation}
\end{prop}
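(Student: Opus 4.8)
The plan is to work throughout in the canonical coordinate frame of $\T X^{(\mbf{j})}$ and to exploit the fact that, because $j_{1}=0$, this bundle splits into three complementary coordinate subbundles: the vertical bundle $\V X^{(\mbf{j})}=\spn{\ddxx{1},\ldots,\ddxx{n}}$, the subbundle $\mathcal{U}_{0}=\spn{\ddu{1}{0},\ldots,\ddu{m}{0}}$, and the subbundle $\mathcal{U}_{+}$ spanned by all $\ddu{p}{s}$ with $p=2,\ldots,m$ and $1\le s\le j_{p}$ (there is no coordinate $u_{1}^{(s)}$ with $s\ge 1$ since $j_{1}=0$, which is exactly why the three summands exhaust $\T X^{(\mbf{j})}$). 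By \eqref{Gammadef:eq}, every generator of $\Gamma_{k}^{(\mbf{j})}$ is of the form $\ddu{p}{j_{p}-l}$ with $p\ge 2$ and $0\le l\le j_{p}-1$, hence of order $\ge 1$, so $\Gamma_{k}^{(\mbf{j})}\subseteq\mathcal{U}_{+}$; and by Lemma~\ref{prepa2:lem}, formulas \eqref{adkCf0-gij-:eq}--\eqref{ad-gamm-k-j:eq}, every generator of $\Delta_{k}^{(\mbf{j})}$ is either $\ddu{p}{0}$ (the terms $l=j_{p}$) or a vector $\gamma_{\nu,p}^{(\mbf{j})}\ddx\in\V X^{(\mbf{j})}$ (the terms $l>j_{p}$), so $\Delta_{k}^{(\mbf{j})}\subseteq\V X^{(\mbf{j})}\oplus\mathcal{U}_{0}$. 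Since $\mathcal{U}_{+}$ and $\V X^{(\mbf{j})}\oplus\mathcal{U}_{0}$ are complementary, $\Gamma_{k}^{(\mbf{j})}\cap\Delta_{k}^{(\mbf{j})}=\{0\}$ automatically; hence the direct sum in \eqref{G-k-j:eq} is legitimate and it remains only to prove the set equality $G_{k}^{(\mbf{j})}=\Gamma_{k}^{(\mbf{j})}+\Delta_{k}^{(\mbf{j})}$, from which everything else will follow.

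I would prove that set equality by induction on $k$. For $k=0$, $G_{0}^{(\mbf{j})}=\{\ddu{1}{0},\ddu{2}{j_{2}},\ldots,\ddu{m}{j_{m}}\}$, and reading \eqref{Gammadef:eq}--\eqref{Deltadef:eq} directly gives $\Delta_{0}^{(\mbf{j})}=\spn{\ddu{p}{0}\mid j_{p}=0}$ and $\Gamma_{0}^{(\mbf{j})}=\spn{\ddu{p}{j_{p}}\mid p\ge 2,\ j_{p}\ge 1}$, whose union is $G_{0}^{(\mbf{j})}$. For the inductive step, apply $\ad_{g_{0}^{(\mbf{j})}}$ to $G_{k}^{(\mbf{j})}=\Gamma_{k}^{(\mbf{j})}+\Delta_{k}^{(\mbf{j})}$. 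On $\Gamma_{k}^{(\mbf{j})}$ it lowers every order index by one, sending $\ddu{p}{s}$ to $-\ddu{p}{s-1}$ for $1\le s\le j_{p}$ (directly from the bracket with the term $u_{p}^{(s)}\ddu{p}{s-1}$ of $g_{0}^{(\mbf{j})}$; cf.\ \eqref{adkCf0-gij-:eq}); on $\Delta_{k}^{(\mbf{j})}$ it raises the $\ad$-exponent by one and, by \eqref{ad-gamm-k-j:eq}, keeps the new generators inside $\V X^{(\mbf{j})}$. Comparing $\Gamma_{k}^{(\mbf{j})}+\ad_{g_{0}^{(\mbf{j})}}\Gamma_{k}^{(\mbf{j})}$ and $\Delta_{k}^{(\mbf{j})}+\ad_{g_{0}^{(\mbf{j})}}\Delta_{k}^{(\mbf{j})}$ with $\Gamma_{k+1}^{(\mbf{j})}$ and $\Delta_{k+1}^{(\mbf{j})}$ read off \eqref{Gammadef:eq}--\eqref{Deltadef:eq}, the only discrepancy is, for each channel $p\ge 2$ with $k\ge j_{p}-1$, the field $\ddu{p}{0}$: it is produced inside $\ad_{g_{0}^{(\mbf{j})}}\Gamma_{k}^{(\mbf{j})}$ and is absent from $\Gamma_{k+1}^{(\mbf{j})}$, but precisely then $j_{p}\le k+1$, so $\ddu{p}{0}$ is the $l=j_{p}$ generator of $\Delta_{k+1}^{(\mbf{j})}$. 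Tracking this passage of each $\ddu{p}{0}$ from the $\Gamma$-part to the $\Delta$-part at the step $k=j_{p}-1$ yields $G_{k+1}^{(\mbf{j})}=\Gamma_{k+1}^{(\mbf{j})}+\Delta_{k+1}^{(\mbf{j})}$. This bookkeeping at the channel-exhaustion transitions is the only genuinely delicate point; everything that follows is a consequence of Lemma~\ref{prepa2:lem} or of elementary counting.

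The dimension formulas come next. As the generators of $\Gamma_{k}^{(\mbf{j})}$ are distinct coordinate fields, $\dim\Gamma_{k}^{(\mbf{j})}=\sum_{p=2}^{m}\big((k\vee(j_{p}-1))+1\big)$, and separating the channels with $j_{p}\le k$ (each contributing $j_{p}$) from those with $j_{p}>k$ (each contributing $k+1$), together with $j_{1}=0$, gives \eqref{dimGamma:eq}. Likewise $\Delta_{k}^{(\mbf{j})}$ is the direct sum of $\Delta_{k}^{(\mbf{j})}\cap\mathcal{U}_{0}=\spn{\ddu{p}{0}\mid j_{p}\le k}$, of dimension $\#\{p\mid j_{p}\le k\}$ (equal to $i$ if $j_{i}\le k<j_{i+1}$, to $m$ if $k\ge j_{m}$), and of $\Delta_{k}^{(\mbf{j})}\cap\V X^{(\mbf{j})}$, spanned by at most $\sum_{p\,:\,j_{p}\le k}(k-j_{p})$ vectors $\gamma_{\nu,p}^{(\mbf{j})}\ddx$, hence of dimension at most $\min(n,\,ki-\sum_{p=1}^{i}j_{p})$, resp.\ $\min(n,\,km-\vert\mbf{j}\vert)$; adding the two and using $c+\min(a,b)=\min(a+c,b+c)$ yields \eqref{dimDelta:eq}. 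For the first assertion of the proposition, the same description of $\Delta_{k}^{(\mbf{j})}$ shows that for $k<(j_{i+1}\vee j'_{i+1})$ neither $\Delta_{k}^{(\mbf{j})}$ nor $\Delta_{k}^{(\mbf{j'})}$ involves a channel $p\ge i+1$ (since $j_{p},j'_{p}>k$), so it suffices to check $\gamma_{\nu,p}^{(\mbf{j})}=\gamma_{\nu,p}^{(\mbf{j'})}$ for $p\le i$ and all $\nu$ up to $(j_{i+1}\vee j'_{i+1})-1-j_{p}$; in that range $\mbf{j}\bigvee(\nu-1)=\mbf{j'}\bigvee(\nu-1)$, so both sides are functions on one and the same jet manifold, and I would obtain the equality by induction on $\nu$ exactly as in the proof of Lemma~\ref{prepa2:lem}: the recursion \eqref{gamm-k-j:eq}--\eqref{gamm-j-j:eq} starts from $\gamma_{1,p}^{(\mbf{j})}=(-1)^{j_{p}+1}\frac{\partial f}{\partial u_{p}^{(0)}}$, which depends only on the unchanged $j_{p}$, and at each step its only $\mbf{j}$-dependent term $\sum_{q}\sum_{l=0}^{j_{q}-1}u_{q}^{(l+1)}\frac{\partial}{\partial u_{q}^{(l)}}$ reaches $\gamma_{\nu,p}^{(\mbf{j})}$ only through the truncation index $\min(j_{q}-1,\nu-1)$, which equals $\nu-1$ independently of the value of $j_{q}$ for every channel $q\ge i+1$ as long as $\nu\le\min(j_{q},j'_{q})$, i.e.\ while $\nu\le j_{i+1}\vee j'_{i+1}$; hence the two recursions coincide.

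Finally, because $\mathcal{U}_{+}$ and $\V X^{(\mbf{j})}\oplus\mathcal{U}_{0}$ are complementary and $\Gamma_{k}^{(\mbf{j})}\subseteq\mathcal{U}_{+}$, $\Delta_{k}^{(\mbf{j})}\subseteq\V X^{(\mbf{j})}\oplus\mathcal{U}_{0}$, the decomposition \eqref{G-k-j:eq} forces $\Gamma_{k}^{(\mbf{j})}=G_{k}^{(\mbf{j})}\cap\mathcal{U}_{+}$ and $\Delta_{k}^{(\mbf{j})}=G_{k}^{(\mbf{j})}\cap(\V X^{(\mbf{j})}\oplus\mathcal{U}_{0})$; since $G_{k}^{(\mbf{j})}=G_{k_{\star}^{(\mbf{j})}}^{(\mbf{j})}$ for all $k\ge k_{\star}^{(\mbf{j})}$ by Proposition~\ref{kstar-j:prop}, both $\Gamma_{k}^{(\mbf{j})}$ and $\Delta_{k}^{(\mbf{j})}$ are constant for $k\ge k_{\star}^{(\mbf{j})}$. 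For \eqref{ineq-j:eq}, the left inequality is \eqref{ineq:kstar-j}. If $\dim\Delta_{k_{\star}^{(\mbf{j})}}^{(\mbf{j})}=n+m$, then $\Delta_{k_{\star}^{(\mbf{j})}}^{(\mbf{j})}=\V X^{(\mbf{j})}\oplus\mathcal{U}_{0}$ for dimension reasons, so it contains $\ddu{m}{0}$; but $\Delta_{k_{\star}^{(\mbf{j})}}^{(\mbf{j})}\cap\mathcal{U}_{0}=\spn{\ddu{p}{0}\mid j_{p}\le k_{\star}^{(\mbf{j})}}$, so this requires $j_{m}\le k_{\star}^{(\mbf{j})}$. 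The case $k\ge j_{m}$ of \eqref{dimDelta:eq}, applied at $k=k_{\star}^{(\mbf{j})}$, then gives $n+m=\dim\Delta_{k_{\star}^{(\mbf{j})}}^{(\mbf{j})}\le\min\big((k_{\star}^{(\mbf{j})}+1)m-\vert\mbf{j}\vert,\,n+m\big)$, which forces $(k_{\star}^{(\mbf{j})}+1)m\ge n+m+\vert\mbf{j}\vert$, i.e.\ $k_{\star}^{(\mbf{j})}\ge(n+\vert\mbf{j}\vert)/m$; combined with $k_{\star}^{(\mbf{j})}\ge j_{m}$ this is the right inequality of \eqref{ineq-j:eq}.
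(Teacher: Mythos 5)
Your proof is correct and follows essentially the same route as the paper: the same induction on $k$ for \eqref{G-k-j:eq} (tracking the hand-off of each $\ddu{p}{0}$ from the $\Gamma$-part to the $\Delta$-part at $k+1=j_{p}$), the same generator count for \eqref{dimGamma:eq}--\eqref{dimDelta:eq}, and the same argument ($j_{m}\le k_{\star}^{(\mbf{j})}$ forced by $\ddu{m}{0}\in\Delta_{k_{\star}^{(\mbf{j})}}^{(\mbf{j})}$, then \eqref{dimDelta:eq}) for \eqref{ineq-j:eq}. The only substantive additions are welcome ones: making the directness of the sum automatic via the splitting $\V X^{(\mbf{j})}\oplus\mathcal{U}_{0}\oplus\mathcal{U}_{+}$, and spelling out, via the recursion \eqref{gamm-k-j:eq}--\eqref{gamm-j-j:eq}, the first assertion $\Delta_{k}^{(\mbf{j})}=\Delta_{k}^{(\mbf{j'})}$, which the paper's proof leaves implicit.
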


\begin{proof} 
By definition, the generators of $\Gamma_{k}^{(\mbf{j})}$ are independent for all $k$ and thus their number is equal to $\dim \Gamma_{k}^{(\mbf{j})}$, hence \eqref{dimGamma:eq}. The dimension of $\Delta_{k}^{(\mbf{j})}$, in turn, is lesser than, or equal to, the number of its generators, in number $(k+1) + \ldots + (k-j_{i}+1) = (k+1)i - \sum_{p=1}^{i}j_{p}$,
if $j_{i}\leq k < j_{i+1}$ (respectively $(k+1) + \ldots + (k-j_{m}+1) = (k+1)m - \mid\mbf{j}\mid$, if $k\geq j_{m}$), 
and, since, according to \eqref{ad-gamm-k-j:eq} of Lemma~\ref{prepa2:lem}, $\Delta_{k}^{(\mbf{j})}$ is contained in $\{ \ddu{1}{0},\ldots, \ddu{i}{0}, \ddxx{1},\ldots, \ddxx{n}  \}$ if $j_{i}\leq k < j_{i+1}$, $i=1,\ldots, m-1$ (respectively in $\{ \ddu{1}{0},\ldots, \ddu{m}{0}, \ddxx{1},\ldots, \ddxx{n}  \}$ if $k\geq j_{m}$), its dimension is bounded above by $i+n$ (resp. $m+n$), hence \eqref{dimDelta:eq} 

The proof of \eqref{G-k-j:eq} is by induction. For $k=0$, by \eqref{Gammadef:eq}-\eqref{Deltadef:eq}, we indeed have $G_{0}^{(\mbf{j})}= \left\{\ddu{1}{0}, \ddu{2}{j_{2}}, \ldots, \ddu{m}{j_{m}}\right\} =  \left\{\ddu{1}{0}\right\} \oplus \left\{\ddu{2}{j_{2}}, \ldots, \ddu{m}{j_{m}}\right\}  = \Delta_{0}^{(\mbf{j})}  \oplus \Gamma_{0}^{(\mbf{j})}$. Thus, \eqref{G-k-j:eq} is valid at the order 0. 

Assume now that \eqref{G-k-j:eq} holds true up to the order $\nu>0$ with $j_{r} \leq \nu < j_{r+1}$ for some $r\in \{1, \ldots, m\}$, assuming that  $j_{r} < j_{r+1}$. If $j_{r}= j_{r+1}$, the reader may immediately go to the case $\nu +1 = j_{r+1}$ below.

At the order $\nu + 1$, two cases are possible: either $j_{r} \leq \nu+1 < j_{r+1}$ or $\nu +1 = j_{r+1}$. 
In the first case, using Lemma~\ref{prepa2:lem},  we get:
$$\begin{aligned}
G_{\nu+1}^{(\mbf{j})} &= G_{\nu}^{(\mbf{j})} + ad_{g_{0}^{(\mbf{j})}}G_{\nu}^{(\mbf{j})}  = \Gamma_{\nu}^{(\mbf{j})} \oplus \Delta_{\nu}^{(\mbf{j})} +   ad_{g_{0}^{(\mbf{j})}}\Gamma_{\nu}^{(\mbf{j})} +  ad_{g_{0}^{(\mbf{j})}}\Delta_{\nu}^{(\mbf{j})}\\
&= \Gamma_{\nu}^{(\mbf{j})} \oplus \Delta_{\nu}^{(\mbf{j})} + \left\{ad_{g_{0}^{(\mbf{j})}}^{\nu+1}\ddu{1}{0}, \ldots, ad_{g_{0}^{(\mbf{j})}}^{\nu+1-j_{r}}\ddu{r}{0}, \ddu{r+1}{j_{r+1}-\nu-1}, \ldots, \ddu{m}{j_{m}-\nu-1}\right\}\\
&= \Gamma_{\nu+1}^{(\mbf{j})} \oplus \Delta_{\nu+1}^{(\mbf{j})}.
\end{aligned}$$
In the second case, namely if $\nu +1 = j_{r+1}$, 
$$\begin{aligned}
G_{\nu+1}^{(\mbf{j})} &= \Gamma_{\nu}^{(\mbf{j})} \oplus \Delta_{\nu}^{(\mbf{j})} 
\\
& \hspace{1cm}+ \left\{ad_{g_{0}^{(\mbf{j})}}^{\nu+1}\ddu{1}{0}, \ldots, ad_{g_{0}^{(\mbf{j})}}^{\nu+1-j_{r}}\ddu{r}{0}, \ddu{r+1}{0}, \ddu{r+2}{j_{r+2}-\nu-1}, \ldots, \ddu{m}{j_{m}-\nu-1}\right\}\\
&= \Gamma_{\nu+1}^{(\mbf{j})} \oplus \Delta_{\nu+1}^{(\mbf{j})}.
\end{aligned}$$
The case $j_{m} \leq \nu+1$ follows the same lines:
$$G_{\nu+1}^{(\mbf{j})} 
=  \Gamma_{\nu}^{(\mbf{j})} \oplus \Delta_{\nu}^{(\mbf{j})} + \left\{ad_{g_{0}^{(\mbf{j})}}^{\nu+1}\ddu{1}{0}, \ldots, ad_{g_{0}^{(\mbf{j})}}^{\nu+1-j_{m}}\ddu{m}{0}\right\}
=  \Gamma_{\nu+1}^{(\mbf{j})} \oplus \Delta_{\nu+1}^{(\mbf{j})}
$$
hence \eqref{G-k-j:eq} is proven and the property of the number of iterations $k_{\star}^{(\mbf{j})}$ to simultaneously saturate the dimensions of $\Gamma_{k}^{(\mbf{j})}$ and $\Delta_{k}^{(\mbf{j})}$ immediately follows.

Moreover, if $\dim \Delta_{k_{\star}^{(\mbf{j})}}^{(\mbf{j})} = m+n$, we must have $j_{m} \leq k_{\star}^{(\mbf{j})}$ since otherwise, using definition \eqref{Deltadef:eq} for $k_{\star}^{(\mbf{j})} < j_{m}$,  $\Delta_{k_{\star}^{(\mbf{j})}}^{(\mbf{j})}$ would not contain $\{\ddu{m}{0}\}$ and its dimension would not exceed $m-1+n$. Consequently, applying once more \eqref{dimDelta:eq} with  $\dim \Delta_{k_{\star}^{(\mbf{j})}}^{(\mbf{j})} = m+n$, we get $(k_{\star}^{(\mbf{j})}+1)m -\mid \mbf{j} \mid \geq m+n$, which, combined with \eqref{ineq:kstar-j}, immediately yields \eqref{ineq-j:eq}.
\end{proof}

\begin{rem}
The inequality \eqref{ineq-j:eq} reads $k_{\star}^{(\mbf{j})} -\frac{\mid \mbf{j} \mid}{m} \geq \frac{n}{m}$ and may thus be interpreted as an estimate of the gap between $k_{\star}^{(\mbf{j})}$ and the average value $\frac{\mid \mbf{j} \mid}{m}$ of the prolongation lengths $j_{1}, j_{2}, \ldots, j_{m}$, provided that the prolonged system satisfies the strong accessibility rank condition $\dim \Delta_{k_{\star}^{(\mbf{j})}}^{(\mbf{j})} = m+n$.
\end{rem}

\section{Flatness by Pure Prolongation}\label{purepro:sec}

\subsection{Equivalence by Pure Prolongation}
Consider the two systems \eqref{sys-equiv:def} with $\dim x= n$, $\dim y= n'$ and $\dim u = \dim v = m$. 

Given arbitrary $\mbf{j}$ and $\mbf{l}$, we recall that the associated prolonged vector fields are
$$
g^{(\mbf{j})} \triangleq g\ddx + \sum_{i=1}^{m}\sum_{p=0}^{j_{i}}u_{i}^{(p+1)}\ddu{i}{p}, \qquad
\gamma^{(\mbf{l})} \triangleq \gamma\ddy + \sum_{i=1}^{m}\sum_{p=0}^{l_{i}}v_{i}^{(p+1)}\ddv{i}{p}.
$$
The prolonged states are, respectively, $\ol{x}^{(\mbf{j})}$ and $\ol{y}^{(\mbf{l})}$, and the control inputs $u^{(\mbf{j+1})}$ and $v^{(\mbf{l+1})}$.
\begin{defn}\label{PPequiv:def}
The systems \eqref{sys-equiv:def} are equivalent by pure prolongation (in short $P^2$-equivalent) at a point $\ol{x}_{0} \in X\times \R^{m}_{\infty}$ if, and only if, there exist finite $\mbf{j}$ and $\mbf{l}$ such that the prolonged systems of order $\mbf{j}$ and $\mbf{l}$ respectively are equivalent by diffeomorphism and feedback, \ie if, and only if, there exists a local diffeomorphism $\varphi$  
 and a feedback $W$:
$$
\ol{y}^{(\mbf{l})} = \varphi( \ol{x}^{(\mbf{j})}) \qquad
v^{(\mbf{l+1})} = W(\ol{x}^{(\mbf{j})}, u^{(\mbf{j+1})})
$$
with $W$ invertible with respect to $u^{(\mbf{j+1})}$ for all $\ol{x}^{(\mbf{j})}$ in a suitable neighborhood of $\ol{x}_{0}$, such that 
$$\gamma^{(\mbf{l})} \circ (\varphi\times W) = \varphi_{\star}g^{(\mbf{j})}.$$
%
\end{defn}
This equivalence indeed implies that $n + \mid \mbf{j} \mid = n' + \mid \mbf{l} \mid$.

\begin{rem}
The equivalence relation by \emph{diffeomorphism and feedback} is easily seen to be strictly finer than the \emph{$P^2$-equivalence} (take $\mbf{j}=\mbf{l}=\0$), which in turn is strictly finer than the \emph{Lie-B\"acklund equivalence} (see \eg Example~\ref{pend:ex}).
\end{rem}

\begin{defn}\label{flatbypp:def}
We say that system~\eqref{control-aff-sys:eq} is \emph{flat by pure prolongation} (in short $P^2$-flat) at a point $\ol{x}_{0} \in X\times \R^{m}_{\infty}$ if, and only if, it is $P^2$-equivalent to $\dot{y}_{i} = v_{i}$, $i=1, \ldots,m$ and $y$ is called $P^2$-flat output.
\end{defn}

It is therefore immediate to remark that a system is $P^2$-flat if, and only if,
there exists a pure prolongation of finite order $\mbf{j}$ such that the prolonged system is feedback linearizable at $\ol{x}_{0}$, thus recovering the definition of linearization by prolongation already introduced in  \cite{JF,FrFo-ejc2005,BC-scl2004}. 

Moreover, a $P^2$-flat output being obviously a flat output and pure prolongations being particular cases of Lie-B\"acklund isomorphisms, the class of $P^2$-flat systems is indeed contained in the class of Lie-B\"acklund equivalence to $0$ (modulo the trivial field $\tau$), \ie  constitutes a subclass of differentially flat systems.

The next Lemma extends a well-know result (see \eg \cite{CLMscl,Sluis-scl93}) to our context (see also \cite{FrFo-ejc2005}).

\begin{lem}\label{reduc-j:lem}
We consider system \eqref{nlsys(j):def}, denoted by $\Sigma^{(\mbf{j})}$, with $\mbf{j} = (j_{1}, \ldots, j_{m})$, assuming, without loss of generality, that $0 \leq j_{1} \leq \ldots \leq j_{m}$, possibly up to input renumbering. We denote by $\mbf{j'} = \mbf{j-j_{1}} = (0, j_{2}-j_{1} \ldots, j_{m}-j_{1})$, and by $\Sigma^{(\mbf{j'})}$ the corresponding system. Then $\Sigma^{(\mbf{j})}$ is $P^2$-flat at a given point $(x_{0},\ol{u}_{0})$ if, and only if, $\Sigma^{(\mbf{j'})}$ is also $P^2$-flat at this point. Moreover, every $P^2$-flat output of $\Sigma^{(\mbf{j})}$ at $(x_{0},\ol{u}_{0})$ is a $P^2$-flat output of $\Sigma^{(\mbf{j'})}$ at the same point, and conversely.

\end{lem}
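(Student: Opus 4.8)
The plan is to show that prolonging all inputs by a common integer $j_{1}$ is a harmless operation, in the sense that it does not change $P^2$-flatness. The key observation is that $\Sigma^{(\mbf{j})}$ is itself a pure prolongation of $\Sigma^{(\mbf{j'})}$: indeed, starting from $\Sigma^{(\mbf{j'})}$ (with $j'_{1}=0$) and adjoining $j_{1}$ further integrators to \emph{every} input channel produces exactly $\Sigma^{(\mbf{j})}$, since $(j'_{i}+j_{1})=j_{i}$ for all $i$. Hence $\Sigma^{(\mbf{j})}$ and $\Sigma^{(\mbf{j'})}$ are related by a pure prolongation, and composition of pure prolongations is again a pure prolongation.

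First I would spell out the ``only if'' direction. Suppose $\Sigma^{(\mbf{j})}$ is $P^2$-flat at $(x_{0},\ol{u}_{0})$: there is a finite $\mbf{k}$ such that the prolongation of $\Sigma^{(\mbf{j})}$ by $\mbf{k}$, which is $\Sigma^{(\mbf{j}+\mbf{k})}$, is feedback linearizable. But $\Sigma^{(\mbf{j}+\mbf{k})}$ is also a prolongation of $\Sigma^{(\mbf{j'})}$, namely by the multi-integer $\mbf{j}+\mbf{k}-\mbf{j'} = (j_{1},\ldots,j_{1})+\mbf{k}$, which is finite. Therefore $\Sigma^{(\mbf{j'})}$ admits a finite prolongation that is feedback linearizable, i.e. it is $P^2$-flat at the same point. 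For the ``if'' direction, assume $\Sigma^{(\mbf{j'})}$ is $P^2$-flat, with a finite $\mbf{k'}$ such that $\Sigma^{(\mbf{j'}+\mbf{k'})}$ is feedback linearizable. Here one must be slightly careful: $\mbf{j}=\mbf{j'}+(j_{1},\ldots,j_{1})$ need not satisfy $\mbf{j}\preceq \mbf{j'}+\mbf{k'}$ componentwise. The fix is to enlarge the linearizing prolongation: take $\mbf{k''} \triangleq \mbf{k'}\bigwedge (j_{1},\ldots,j_{1})$ (componentwise maximum), so that $\mbf{j'}+\mbf{k''} \succeq \mbf{j'}+\mbf{k'}$; since feedback linearizability of a control-affine prolongation is preserved under further pure prolongation (each extra integrator on an input merely raises the corresponding Brunovsk\'y index by one, by Theorem~\ref{feedlin-thm} applied to the filtration \eqref{distj:def}), $\Sigma^{(\mbf{j'}+\mbf{k''})}$ is still feedback linearizable. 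Now $\mbf{j'}+\mbf{k''} = \mbf{j}+(\mbf{k''}-(j_{1},\ldots,j_{1}))$ with $\mbf{k''}-(j_{1},\ldots,j_{1})\in\N^{m}$ finite, so $\Sigma^{(\mbf{j})}$ has a finite feedback-linearizable prolongation and is $P^2$-flat.

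For the statement about flat outputs: in a pure prolongation the prolonged state is $\ol{x}^{(\mbf{j})} = (x,\ol{u}^{(\mbf{j})})$ and a feedback-linearizing diffeomorphism expresses it, together with its controls, as functions of a flat output and finitely many derivatives; since the extra coordinates $u_{i}^{(j'_{i}+1)},\ldots,u_{i}^{(j_{i})}$ are themselves derivatives of the components of $\ol{x}^{(\mbf{j'})}$, any $P^2$-flat output $y=\Psi(\ol{x}^{(\mbf{j})})$ can be rewritten as a function of $\ol{x}^{(\mbf{j'})}$ and its derivatives, and conversely every $P^2$-flat output of $\Sigma^{(\mbf{j'})}$ is trivially one of $\Sigma^{(\mbf{j})}$ because the state of the latter is obtained from that of the former by differentiation. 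More structurally, $\Sigma^{(\mbf{j})}$ and $\Sigma^{(\mbf{j'})}$ are $P^2$-equivalent to each other (via the common prolongation above), and $P^2$-equivalence preserves the property ``$y$ is a $P^2$-flat output'' because it is, by Definition~\ref{PPequiv:def}, a composition of diffeomorphism, feedback and prolongation, all of which carry flat outputs to flat outputs.

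The main obstacle I anticipate is the bookkeeping in the ``if'' direction: one cannot simply invoke ``$\Sigma^{(\mbf{j})}$ is a prolongation of $\Sigma^{(\mbf{j'})}$'' to transport the linearizing prolongation, because the linearizing multi-integer $\mbf{k'}$ for $\Sigma^{(\mbf{j'})}$ may be too small on the channels that get the common shift $j_{1}$. Making the step ``further prolongation preserves feedback linearizability'' precise — via the decomposition $G_{k}^{(\mbf{j})}=\Gamma_{k}^{(\mbf{j})}\oplus\Delta_{k}^{(\mbf{j})}$ of Proposition~\ref{G-k-j:prop} and the involutivity/dimension criterion of Theorem~\ref{feedlin-thm}, checking that the $\Delta$-part is unchanged (Proposition~\ref{G-k-j:prop} gives $\Delta_{k}^{(\mbf{j})}=\Delta_{k}^{(\mbf{j'})}$ for small $k$) and the $\Gamma$-part stays involutive of the expected constant dimension — is the one genuinely non-routine point. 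Everything else is a direct unwinding of the definitions of pure prolongation and of Definition~\ref{flatbypp:def}.
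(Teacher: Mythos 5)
Your ``only if'' direction is exactly the paper's argument (the identity $\Sigma^{(\mbf{j}+\mbf{k})}=\Sigma^{(\mbf{j'}+(j_{1},\ldots,j_{1})+\mbf{k})}$ exhibits the linearizable prolongation of $\Sigma^{(\mbf{j})}$ as a finite prolongation of $\Sigma^{(\mbf{j'})}$), and your treatment of the flat outputs is fine since both chains terminate at the very same feedback linearizable system. The genuine gap is in your ``if'' direction, and it is the one you yourself flag: your repair replaces $\mbf{k'}$ by $\mbf{k''}=\mbf{k'}\bigwedge(j_{1},\ldots,j_{1})$ and then invokes ``feedback linearizability is preserved under further pure prolongation''. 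Since $\mbf{k''}-\mbf{k'}$ has components $\max(0,j_{1}-k'_{i})$, this is in general a \emph{partial} (non-uniform) prolongation, and the claim that such a prolongation preserves static feedback linearizability is neither proved by you nor justified by your parenthetical: the extra integrators are stacked on the channels $u_{i}^{(\cdot)}$, not on the Brunovsk\'y inputs $v_{i}$ of the linearized system (which are related to $u^{(\mbf{j}+\mbf{1})}$ by an input-mixing feedback), so they do not ``merely raise the corresponding Brunovsk\'y index by one''. Concretely, $G_{k}^{(\mbf{l}+\mbf{e})}$ is generated by iterated brackets with the \emph{different} drift $g_{0}^{(\mbf{l}+\mbf{e})}$, and Lemma~\ref{prepa1:lem} shows these brackets carry correction terms depending on the prolongation lengths; involutivity of the $G_{k}^{(\mbf{l})}$ does not formally transfer to the $G_{k}^{(\mbf{l}+\mbf{e})}$ for arbitrary $\mbf{e}$. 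The statement may well be true, but it would itself require the machinery of Proposition~\ref{G-k-j:prop} and Theorem~\ref{cns0:thm}, which you only gesture at.

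The gap can be closed without any partial-prolongation monotonicity, which is presumably why the paper dismisses the converse as trivial. Since $\mbf{j}=\mbf{j'}+(j_{1},\ldots,j_{1})$, you have $\mbf{j}+\mbf{k'}=(\mbf{j'}+\mbf{k'})+(j_{1},\ldots,j_{1})$: the candidate prolongation $\Sigma^{(\mbf{j}+\mbf{k'})}$ is the \emph{total} prolongation of the feedback linearizable system $\Sigma^{(\mbf{j'}+\mbf{k'})}$ by the same number $j_{1}$ of integrators on every channel. Total prolongation by one integrator per input preserves static feedback linearizability: writing $\mbf{l}=\mbf{j'}+\mbf{k'}$, one checks by induction that
$$G_{k+1}^{(\mbf{l}+\mbf{1})}=\left\{\ddu{1}{l_{1}+1},\ldots,\ddu{m}{l_{m}+1}\right\}\oplus G_{k}^{(\mbf{l})},$$
because $[g_{0}^{(\mbf{l}+\mbf{1})},\ddu{i}{l_{i}+1}]=-\ddu{i}{l_{i}}\in G_{0}^{(\mbf{l})}$, the new directions commute with all generators of $G_{k}^{(\mbf{l})}$ (which depend only on $\ol{x}^{(\mbf{l})}$), and $[g_{0}^{(\mbf{l}+\mbf{1})},\xi]\equiv[g_{0}^{(\mbf{l})},\xi] \pmod{G_{k}^{(\mbf{l})}}$ for $\xi\in G_{k}^{(\mbf{l})}$ by involutivity of $G_{k}^{(\mbf{l})}$; hence involutivity, constant dimension and the rank condition of Theorem~\ref{feedlin-thm} all carry over. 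Iterating $j_{1}$ times gives the linearizability of $\Sigma^{(\mbf{j}+\mbf{k'})}$ with the \emph{same} witness $\mbf{k'}$, and the flat outputs transfer as in your last paragraph.
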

\begin{proof}
Since $\Sigma^{(\mbf{j})}$ is $P^2$-flat, there exists $\mbf{j_{0}}$ and $\mbf{l_{0}}$ such that $\Sigma^{(\mbf{j + j_{0}})}$ is feedback equivalent to the linear system $y^{(\mbf{l_{0}+1})}= w$. Therefore, since, by assumption, $\Sigma^{(\mbf{j + j_{0}})} = \Sigma^{(\mbf{j' + j_{1} +j_{0}})}$, $\Sigma^{(\mbf{j'})}$ is also $P^2$-flat.
The converse is trivial as well as the fact that $\Sigma^{(\mbf{j + j_{0}})}$ and $\Sigma^{(\mbf{j' + j_{1} +j_{0}})}$ have the same $P^2$-flat output.
\end{proof}

\begin{rem}
In the single input case ($m=1$), this lemma shows that a $P^2$-flat system is equivalent, without prolongation, to a linear system. We thus trivially recover the fact that $P^2$-flatness and feedback linearizability are equivalent in this case (see \cite{CLMscl} and remark~\ref{singleinputequiv:rem})
\end{rem}

\subsection{Necessary and Sufficient Conditions}

The following Proposition is a straightforward adaptation of Theorem~\ref{feedlin-thm} for an arbitrary order $\bj$. Note that, at this stage, nothing is said about a possible choice of $\bj$, a question that will be dealt with in subsection~\ref{prolongalg:subsec}, theorem~\ref{alg-proof:thm}.

\begin{prop}\label{lineariz-j:prop}
The prolonged system of order $\mbf{j}$ is feedback linearizable at $\ol{0}$ if, and only if, 
$G_{k}^{(\mbf{j})}$ is involutive with locally constant dimension for all $k$ and such that  $G_{k^{(\mbf{j})}_{\ast}}^{(\mbf{j})} = \T X^{(\bj)}$. 
\end{prop}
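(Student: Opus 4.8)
The plan is to reduce Proposition~\ref{lineariz-j:prop} directly to Theorem~\ref{feedlin-thm}, applied not to the original system \eqref{nlsys:eq} but to the purely prolonged system \eqref{nlsys(j):def} regarded as a control system in its own right, with state $\ol{x}^{(\mbf{j})} \in X^{(\mbf{j})}$ and control $u^{(\mbf{j+1})} \in \R^{m}$. First I would observe that \eqref{nlsys(j):def} is already written in the control-affine prolonged form \eqref{prolong1sys:eq}--\eqref{prolong1vecf:eq}, with drift $g_{0}^{(\mbf{j})}$ and control vector fields $g_{i}^{(\mbf{j})} = \ddu{i}{j_{i}}$, $i=1,\ldots,m$, defined by \eqref{f_0-j:def}; indeed, the passage from \eqref{nlsys:eq} to \eqref{control-aff-sys:eq} that legitimizes the use of Theorem~\ref{feedlin-thm} is exactly the special case $\mbf{j}=\0$ of this construction, so the same reasoning applies verbatim after replacing $X^{(\0)}$ by $X^{(\mbf{j})}$, $g_{i}^{(\0)}$ by $g_{i}^{(\mbf{j})}$, and $n+m$ by $n+m+\vert\mbf{j}\vert = \dim X^{(\mbf{j})}$.

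Next I would invoke Theorem~\ref{feedlin-thm}: the system \eqref{nlsys(j):def} is feedback linearizable near $\ol{0}\in X^{(\mbf{j})}$ if and only if (i) each $G_{k}^{(\mbf{j})}$ defined by \eqref{distj:def} is involutive with locally constant dimension and (ii) there is an integer $k^{(\mbf{j})}_{\star}$ with $G_{k}^{(\mbf{j})} = G_{k^{(\mbf{j})}_{\star}}^{(\mbf{j})} = \T X^{(\mbf{j})}$ for all $k \geq k^{(\mbf{j})}_{\star}$. The existence of the stabilization index $k^{(\mbf{j})}_{\star}$ with the bound $k^{(\mbf{j})}_{\star}\leq n+\vert\mbf{j}\vert$ is already guaranteed unconditionally by Proposition~\ref{kstar-j:prop}, so condition (ii) of Theorem~\ref{feedlin-thm} collapses to the single requirement $G_{k^{(\mbf{j})}_{\star}}^{(\mbf{j})} = \T X^{(\mbf{j})}$; together with (i) this is precisely the statement of Proposition~\ref{lineariz-j:prop}. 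It then only remains to recall (as noted right after Theorem~\ref{feedlin-thm}, and equally by construction of $G_{0}^{(\mbf{j})}$ in \eqref{distj:def}) that $G_{0}^{(\mbf{j})}$ is automatically involutive of constant dimension $m$, so the involutivity/constant-dimension hypothesis is only a genuine constraint for $k\geq 1$.

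The only subtlety worth spelling out — and the one I would treat as the main point rather than an obstacle — is the legitimacy of applying Theorem~\ref{feedlin-thm}, which is stated for systems of the form \eqref{nlsys:eq}, to \eqref{nlsys(j):def}. Here one uses that \eqref{nlsys(j):def} is itself of the form \eqref{prolong1sys:eq}: its control vector fields $g_{i}^{(\mbf{j})}=\ddu{i}{j_{i}}$ are constant coordinate vector fields, exactly as the $g_{i}^{(\0)}=\ddu{i}{0}$ in \eqref{prolong1vecf:eq}, so \eqref{nlsys(j):def} is already its own first-order control-affine prolongation in the sense of \eqref{control-aff-sys:eq} and the equivalence between \eqref{nlsys:eq} and \eqref{control-aff-sys:eq} invoked before Theorem~\ref{feedlin-thm} is trivially satisfied. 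Hence Theorem~\ref{feedlin-thm} applies with no further work, and the proof is a one-line invocation once this identification is made; I would present it as such, citing Proposition~\ref{kstar-j:prop} for the finiteness of $k^{(\mbf{j})}_{\star}$ and the remark following \eqref{dist0:def} (transposed to order $\mbf{j}$) for the triviality of the $k=0$ involutivity condition.
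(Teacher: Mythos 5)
Your proposal is correct and coincides with the paper's own treatment: the paper offers no separate proof, introducing the proposition as ``a straightforward adaptation of Theorem~\ref{feedlin-thm} for an arbitrary order $\mbf{j}$,'' which is precisely the reduction you carry out (viewing \eqref{nlsys(j):def} as a control-affine system on $X^{(\mbf{j})}$ whose filtration \eqref{distj:def} plays the role of \eqref{dist0:def}, with Proposition~\ref{kstar-j:prop} supplying the finite stabilization index). Your explicit justification that Theorem~\ref{feedlin-thm} legitimately applies to the prolonged system is a welcome elaboration of what the paper leaves implicit.
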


Again, flat outputs can be computed via Frobenius theorem, once established the list of \emph{Brunovsk\'{y}'s controllability indices of order $\bj$}, as follows:
\begin{defn}\label{brunov-j:def}
Consider the sequence of integers 
$$\rho_{k}^{(\bj)} \triangleq \dim  G_{k}^{(\bj)} / G_{k-1}^{(\bj)}  \quad \forall k\geq 1,  \qquad \rho_{0}^{(\bj)} \triangleq \dim G_{0}^{(\bj)} = m.$$
The Brunovsk\'{y} controllability indices of order $\bj$ are defined by
$$\kappa_{k}^{(\bj)} \triangleq \# \{ l \mid \rho_{l}^{(\bj)} \geq k \}, \quad k= 1,\ldots, m,$$
\end{defn}

As in the $0$th order case, if the prolonged system of order $\mbf{j}$ is feedback linearizable at $\ol{0}$, we have:
\begin{itemize}
\item $\rho_{k}^{(\bj)}$'s and $\kappa_{k}^{(\bj)}$'s are non increasing sequences, 
\item $\rho_{k}^{(\bj)}\leq m$  for all $k$ and $\rho_{k}^{(\bj)}=0$ for all $k\geq k^{(\bj)}_{\ast}+1$,
\item $\kappa_{1}^{(\bj)} = k^{(\bj)}_{\ast} + 1, \quad \kappa_{m}^{(\bj)} \geq 1$, 
\item $\ds \sum_{k=0}^{k^{(\bj)}_{\ast}} \rho_{k}^{(\bj)} = \sum_{k=1}^{m} \kappa_{k}^{(\bj)} =  \dim G_{k^{(\bj)}_{\ast}}^{(\bj)}  = n+m + \mid \bj \mid$.
\end{itemize}

The list $\kappa_{1}^{(\bj)}, \ldots, \kappa_{m}^{(\bj)}$ is uniquely defined up to input permutation, invariant by  prolonged state feedback and prolonged state diffeomorphism,  and is thus equal to the list of controllability indices of the associated linear system \eqref{lin-sys:eq} with $r_{i}= \kappa_{i}^{(\bj)}$, $i=1, \ldots, m$. 

Moreover, for all $k$ and all $i=1,\ldots, m$, and possibly up to a suitable input reordering, we have 
$$G_{k}^{(\bj)} = \bigoplus_{p=1}^{m}  \left\{\ad_{g_{0}^{(\bj)}}^{l}\ddu{p}{j_{p}} \mid l= 0, \ldots k\lor (\kappa_{p}^{(\bj)}-1)\right\}, \quad G_{\kappa_{1}^{(\bj)}-1}^{(\bj)} = G_{k^{(\bj)}_{\ast}}^{(\bj)} = \T X^{(\bj)}$$
and flat outputs $(y_{1}, \ldots, y_{m})$ are locally non trivial solutions of the system of PDE's
\begin{equation}\label{PDEsj:eq}
\left< G_{k}^{(\bj)},  dy_{i} \right>= 0, \; k= 0,\ldots, \kappa_{i}^{(\bj)}-2, \quad \mathrm{with~}                           
\quad \left< G_{\kappa_{i}^{(\bj)}-1}^{(\bj)},  dy_{i} \right>\neq 0, \quad i=1, \ldots,m.
\end{equation}
Finally, the mapping
$$\ol{x}^{(\bj)} \mapsto (y_{1}, \ldots, y_{1}^{(\kappa_{1}^{(\bj)}-1)}, \ldots , y_{m}, \ldots, y_{m}^{(\kappa_{m}^{(\bj)}-1)})$$
is a local diffeomorphism.

\bigskip

In virtue of Lemma~\ref{reduc-j:lem}, it suffices to restrict our analysis to prolongations  of order $\mbf{j}= (j_{1}, \ldots, j_{m})$ such that $0=j_{1}\leq \ldots, \leq j_{m}$. 

\bigskip
We are now ready to state our main result.
\begin{thm}\label{cns0:thm}
A necessary and sufficient condition for $P^2$-flatness at $\ol{0}$ is that there exists $\mbf{j}= (j_{1}, \ldots, j_{m}) \in \N^{m}$, $0=j_{1}\leq \ldots, \leq j_{m}$, such that
\begin{itemize}
\item[(i)] $\ol{\Delta_{k}^{(\mbf{j})}} = \Delta_{k}^{(\mbf{j})}$ with $\dim \Delta_{k}^{(\mbf{j})}$  locally constant for all $k\geq 0$,
\item[(ii)] $[\Gamma_{k}^{(\mbf{j})}, \Delta_{k}^{(\mbf{j})}] \subset  \Delta_{k}^{(\mbf{j})}$ for all $k\geq 0$, 
\item[(iii)] $k^{(\bj)}_{\ast}$ is such that $\Delta_{k}^{(\mbf{j})} = \T X\times \T\R^{m}$ and $\Gamma_{k}^{(\mbf{j})} = \T\R^{\mid \mbf{j} \mid}$ for all $k\geq  k^{(\bj)}_{\ast}$.
\end{itemize}
\end{thm}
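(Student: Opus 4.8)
The plan is to show that the conditions (i)--(iii) of the theorem are exactly a repackaging, via the decomposition $G_{k}^{(\mbf{j})} = \Gamma_{k}^{(\mbf{j})} \oplus \Delta_{k}^{(\mbf{j})}$ of Proposition~\ref{G-k-j:prop}, of the Jakubczyk--Respondek conditions (i)--(ii) of Theorem~\ref{feedlin-thm} applied to the prolonged system of order $\mbf{j}$, as stated in Proposition~\ref{lineariz-j:prop}. Indeed, by Lemma~\ref{reduc-j:lem} we may restrict to prolongations with $0 = j_{1}\leq \cdots \leq j_{m}$ without loss of generality, and by Definition~\ref{flatbypp:def} together with Proposition~\ref{lineariz-j:prop}, $P^2$-flatness at $\ol{0}$ is equivalent to the existence of such a $\mbf{j}$ with $G_{k}^{(\mbf{j})}$ involutive of locally constant dimension for all $k$ and $G_{k_{\ast}^{(\mbf{j})}}^{(\mbf{j})} = \T X^{(\mbf{j})}$. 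So the entire task is to prove, for a fixed admissible $\mbf{j}$, the equivalence
$$
\left[\,G_{k}^{(\mbf{j})} \text{ involutive, locally constant dim., and } G_{k_{\ast}^{(\mbf{j})}}^{(\mbf{j})} = \T X^{(\mbf{j})}\,\right]
\iff
\text{(i)} \wedge \text{(ii)} \wedge \text{(iii)}.
$$

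First I would record the structural facts from \eqref{Gammaprop:eq}: $\Gamma_{k}^{(\mbf{j})}$ is always involutive of locally constant dimension given by \eqref{dimGamma:eq}, $\Delta_{k}^{(\mbf{j})} \subset \V(X\times\R^{m}_{\infty})$, the bracket $[\Gamma_{k}^{(\mbf{j})},\Delta_{k}^{(\mbf{j})}]$ lands in the vertical bundle and meets $\Gamma_{k}^{(\mbf{j})}$ only in $\{0\}$, and $[\Gamma_{k}^{(\mbf{j})},\Gamma_{l}^{(\mbf{j})}]\subset\Gamma_{k\wedge l}^{(\mbf{j})}$. The key algebraic point for the $(\Rightarrow)$ direction is that because the sum $G_{k}^{(\mbf{j})} = \Gamma_{k}^{(\mbf{j})}\oplus\Delta_{k}^{(\mbf{j})}$ is direct and the second summand is vertical while the first is "purely horizontal in the $u$-jet directions", involutivity of $G_{k}^{(\mbf{j})}$ forces each summand to be involutive separately and forces $[\Gamma_{k}^{(\mbf{j})},\Delta_{k}^{(\mbf{j})}]\subset\Delta_{k}^{(\mbf{j})}$: take $\eta_{1}+\delta_{1}$ and $\eta_{2}+\delta_{2}$ with $\eta_{i}\in\Gamma_{k}^{(\mbf{j})}$, $\delta_{i}\in\Delta_{k}^{(\mbf{j})}$; expand the bracket; $[\eta_{1},\eta_{2}]\in\Gamma_{k}^{(\mbf{j})}$ already; $[\eta_{1},\delta_{2}]$, $[\delta_{1},\eta_{2}]$, $[\delta_{1},\delta_{2}]$ are all vertical; since $G_{k}^{(\mbf{j})}\cap\V = \Delta_{k}^{(\mbf{j})}$ (this needs a one-line check using \eqref{G-k-j:eq} and that $\Gamma_{k}^{(\mbf{j})}$ has no vertical component), the whole vertical part must lie in $\Delta_{k}^{(\mbf{j})}$, giving (ii) and also $\ol{\Delta_{k}^{(\mbf{j})}}=\Delta_{k}^{(\mbf{j})}$, i.e. (i). Local constancy of $\dim\Delta_{k}^{(\mbf{j})}$ follows by subtracting the (globally) constant $\dim\Gamma_{k}^{(\mbf{j})}$ from the locally constant $\dim G_{k}^{(\mbf{j})}$. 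For (iii): $G_{k_{\ast}^{(\mbf{j})}}^{(\mbf{j})}=\T X^{(\mbf{j})}$ has dimension $n+m+|\mbf{j}|$; since $\dim\Gamma_{k_{\ast}^{(\mbf{j})}}^{(\mbf{j})}=|\mbf{j}|$ (from \eqref{dimGamma:eq}, as necessarily $k_{\ast}^{(\mbf{j})}\geq j_{m}$ once $\Delta$ fills $\T X\times\T\R^{m}$), the complementary summand $\Delta_{k_{\ast}^{(\mbf{j})}}^{(\mbf{j})}$ has dimension $n+m$ and, being vertical and spanned by the $\partial/\partial u_{p}^{(0)}$ and $\partial/\partial x_{i}$ directions, equals $\T X\times\T\R^{m}$; monotonicity of the $G_{k}^{(\mbf{j})}$ filtration and Proposition~\ref{G-k-j:prop} then propagate this to all $k\geq k_{\ast}^{(\mbf{j})}$, and likewise $\Gamma_{k}^{(\mbf{j})}=\T\R^{|\mbf{j}|}$ there.

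For the converse $(\Leftarrow)$, assume (i)--(iii). Involutivity of $G_{k}^{(\mbf{j})}$: write two elements of $G_{k}^{(\mbf{j})}=\Gamma_{k}^{(\mbf{j})}\oplus\Delta_{k}^{(\mbf{j})}$ as above; $[\eta_{1},\eta_{2}]\in\Gamma_{k}^{(\mbf{j})}$ by \eqref{Gammaprop:eq}; $[\delta_{1},\delta_{2}]\in\Delta_{k}^{(\mbf{j})}$ by (i); and $[\eta_{i},\delta_{j}]\in\Delta_{k}^{(\mbf{j})}$ by (ii); so the bracket is back in $G_{k}^{(\mbf{j})}$. Locally constant dimension of $G_{k}^{(\mbf{j})}$: add the constant $\dim\Gamma_{k}^{(\mbf{j})}$ to the locally constant $\dim\Delta_{k}^{(\mbf{j})}$, using directness. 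Finally $G_{k_{\ast}^{(\mbf{j})}}^{(\mbf{j})} = \Gamma_{k_{\ast}^{(\mbf{j})}}^{(\mbf{j})}\oplus\Delta_{k_{\ast}^{(\mbf{j})}}^{(\mbf{j})} = \T\R^{|\mbf{j}|}\oplus(\T X\times\T\R^{m}) = \T X^{(\mbf{j})}$ by (iii). Then Proposition~\ref{lineariz-j:prop} gives that the $\mbf{j}$-prolonged system is feedback linearizable at $\ol{0}$, hence the original system is $P^2$-flat by Definition~\ref{flatbypp:def}.

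The step I expect to be the main obstacle is making rigorous the claim that $G_{k}^{(\mbf{j})}\cap\V(X\times\R^{m}_{\infty}) = \Delta_{k}^{(\mbf{j})}$ (equivalently, that a vertical vector field lying in $G_{k}^{(\mbf{j})}$ must lie in the $\Delta$-summand), because the generators $\ad_{g_{0}^{(\mbf{j})}}^{l-j_{p}}\ddu{p}{0}$ of $\Delta_{k}^{(\mbf{j})}$ are vertical only by virtue of \eqref{ad-gamm-k-j:eq} and one must rule out "accidental" vertical combinations that mix $\Gamma$- and $\Delta$-generators — this is exactly where the third relation in \eqref{Gammaprop:eq}, $[\Gamma_{k}^{(\mbf{j})},\Delta_{k}^{(\mbf{j})}]\cap\Gamma_{k}^{(\mbf{j})}=\{0\}$, and the directness of the decomposition \eqref{G-k-j:eq} do the real work; care is also needed because local constancy of $\dim\Delta_{k}^{(\mbf{j})}$ is only "locally" constant whereas $\dim\Gamma_{k}^{(\mbf{j})}$ is genuinely constant, so the two filtrations must be handled on a common neighborhood of $\ol{0}$ where all ranks are constant.
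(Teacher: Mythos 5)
Your proposal is correct and follows essentially the same route as the paper: reduce to Proposition~\ref{lineariz-j:prop} via Definition~\ref{flatbypp:def} and Lemma~\ref{reduc-j:lem}, then translate involutivity and constant rank of $G_{k}^{(\mbf{j})}$ through the direct-sum decomposition \eqref{G-k-j:eq}, using the relations \eqref{Gammaprop:eq} (in particular $[\Gamma_{k}^{(\mbf{j})},\Delta_{k}^{(\mbf{j})}]\cap\Gamma_{k}^{(\mbf{j})}=\{0\}$ and the verticality of $\Delta_{k}^{(\mbf{j})}$) to force the vertical part of each bracket into $\Delta_{k}^{(\mbf{j})}$. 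The "obstacle" you flag is precisely the point the paper disposes of with those same relations, so no gap remains.
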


\begin{proof}
By \eqref{G-k-j:eq} of Proposition~\ref{G-k-j:prop}, we have 
$G_{k}^{(\mbf{j})} =  \Gamma_{k}^{(\mbf{j})} \oplus \Delta_{k}^{(\mbf{j})}$  for all $k\geq 0$.
Then, $G_{k}^{(\mbf{j})} = \ol{G_{k}^{(\mbf{j})}}$ implies that 
$[\Gamma_{k}^{(\mbf{j})} \oplus \Delta_{k}^{(\mbf{j})}, \Gamma_{k}^{(\mbf{j})} \oplus \Delta_{k}^{(\mbf{j})}] \subset \Gamma_{k}^{(\mbf{j})} \oplus \Delta_{k}^{(\mbf{j})}$. Since $\Gamma_{k}^{(\mbf{j})} = \ol{\Gamma_{k}^{(\mbf{j})}}$ for all $k$, and since $[\Gamma_{k}^{(\mbf{j})} , \Delta_{k}^{(\mbf{j})}] \cap \Gamma_{k}^{(\mbf{j})}  = \{0\}$ by Lemma~\ref{prepa2:lem} and \eqref{Gammaprop:eq}, we deduce that $[\Gamma_{k}^{(\mbf{j})}, \Delta_{k}^{(\mbf{j})}]  + [\Delta_{k}^{(\mbf{j})} , \Delta_{k}^{(\mbf{j})}]  \subset \Delta_{k}^{(\mbf{j})}$, hence $[\Gamma_{k}^{(\mbf{j})}, \Delta_{k}^{(\mbf{j})}] \subset  \Delta_{k}^{(\mbf{j})}$
and $[\Delta_{k}^{(\mbf{j})} , \Delta_{k}^{(\mbf{j})}]  \subset \Delta_{k}^{(\mbf{j})}$, \ie $\Delta_{k}^{(\mbf{j})} = \ol{\Delta_{k}^{(\mbf{j})}}$, for all $k\geq 0$. 

Conversely, $[\Gamma_{k}^{(\mbf{j})}, \Delta_{k}^{(\mbf{j})}] \subset  \Delta_{k}^{(\mbf{j})}$
and $\Delta_{k}^{(\mbf{j})} = \ol{\Delta_{k}^{(\mbf{j})}}$ for all $k\geq 0$ trivially implies that $G_{k}^{(\mbf{j})} = \ol{G_{k}^{(\mbf{j})}}$  for all $k\geq 0$.

Moreover, since $\dim \Gamma_{k}^{(\mbf{j})}$ is constant by construction, the fact that $G_{k}^{(\mbf{j})}$ has locally constant dimension is equivalent to the fact that $\Delta_{k}^{(\mbf{j})}$ has locally constant dimension too for all $k\geq 0$, hence (i).

Finally, (iii) is an immediate consequence of the condition that $G_{k}^{(\mbf{j})} = \T X\times \T\R^{m+\mid \mbf{j} \mid}$ for all $k\geq  k^{(\bj)}_{\ast}$, and the theorem is proven.
\end{proof}
\begin{rem}
Expressed in words, the above necessary and sufficient conditions read (i) involutivity with locally constant dimension of the $\Delta_{k}^{(\mbf{j})}$'s, (ii) invariance of $\Delta_{k}^{(\mbf{j})}$ by $\Gamma_{k}^{(\mbf{j})}$ and (iii) strong controllability rank condition (see \eg \cite{sussmann-jurdjevic,hermann-krener}).
\end{rem}

\subsection{The Pure Prolongation Algorithm}\label{prolongalg:subsec}
From now on, for every sequence $\mbf{l} = \left(l_{1}, \ldots, l_{m}\right)\in \N^{m}$, we systematically re-order the indices $\{1,\ldots, m\}$ by a suitable permutation $\alpha : \{1,\ldots, m\} \rightarrow \{1,\ldots, m\}$ such that 
$0= l_{\alpha(1)} \leq  \ldots \leq l_{\alpha(m)}$.
Moreover, for simplicity's sake, the permutation $\alpha$ will be omitted. We will thus abusively replace $l_{\alpha(i)}$ by $l_{i}$, \ie $0=l_{1} \leq  \ldots \leq l_{m}$.


In the following algorithm we assume that the computations are done in a suitable open dense neighborhood of $\ol{0}$ where all the distributions involved have constant dimension.


\subsubsection{Initialization}

Consider the filtration $\{ G_{k}^{(0)} \mid k\geq 0\}$ defined by \eqref{dist0:def}. If every $G_{k}^{(0)}$ satisfies the conditions of theorem~\ref{feedlin-thm}, the system is feedback linearizable and no prolongation is needed. In particular, for $m=1$, a case where flatness and feedback linearizability are equivalent, the results of this section are pointless.

Otherwise, we have the following alternative:
\begin{itemize}
\item either there must exist $k_0  \leq n$ such that $G_{k_{0}}^{(0)}$ is not involutive while every $G_{k}^{(0)}$ is involutive for $k<k_{0}$ (note that $k_0 \geq 1$ since $G_{0}^{(\0)}$ is always involutive), 
\item or the $G_{k}^{(0)}$'s are all involutive but with $\max_{k\geq 0}\dim G_{k}^{(0)} = \dim G_{k_{\star}^{(\0)}}^{(0)} < n+m$.
\end{itemize}

\paragraph{First Case.}
There exists $k_0  \in\{1, \ldots, n\}$, first index for which $G_{k_{0}}^{(0)}$ is not involutive. 

Let $H_{1}^{(\0)}\neq \{0\}$ be any involutive distribution included in $G_{1}^{(\0)}$, of the form
\begin{equation}\label{H1-0:eq}
H_{1}^{(\0)} \triangleq \{\ddu{1}{0}, \ldots, \ad_{g_{0}^{(0)}} \ddu{1}{0}, \ldots, \ddu{p_{0}}{0}, \ldots, \ad_{g_{0}^{(0)}} \ddu{p_{0}}{0} \}
\end{equation} 
for some $p_{0}\leq m$. 

By \eqref{adkg0j-adq0gij+:eq:appndx}, $\ad_{g_{0}^{(0)}} \ddu{p}{0} = \ad_{g_{0}^{(\mbf{l})}} \ddu{p}{0}$ for all $\mbf{l}$ and, thanks to \eqref{0brack:eq}, we have $[ \ddu{q}{l_{q}-k}, \ad_{g_{0}^{(0)}} \ddu{p}{0}] = 0$ for all $l_{q}-k > 1$ with $q > p_{0}$ and all $p=1, \ldots, p_{0}$. Therefore we can choose
\begin{equation}\label{Delta1-0:eq}
\Delta_{1}^{(\mbf{l})} \triangleq \{\ddu{1}{0}, \ldots, \ad_{g_{0}^{(\mbf{l})}} \ddu{1}{0}, \ldots, \ddu{p_{0}}{0}, \ldots, \ad_{g_{0}^{(\mbf{l})}} \ddu{p_{0}}{0} \} = H_{1}^{(\0)}
\end{equation}
and 
\begin{equation}\label{Gamma1-0:eq}
\Gamma_{1}^{(\mbf{l})} \triangleq \{\ddu{p_{0}+1}{l_{p_{0}+1}},\ddu{p_{0}+1}{l_{p_{0}+1}-1} , \ldots, \ddu{m}{l_{m}},\ddu{m}{l_{m}-1} \}
\end{equation}
for any 
\begin{equation}\label{ll-init:eq}
\mbf{l} = \left(\underbrace{0,\ldots, 0}_{p_{0}}, l_{p_{0}+1}, \ldots, l_{m}\right), \quad 2 = l_{p_{0}+1} \leq \ldots \leq l_{m}
\end{equation}
since they naturally satisfy $\ol{\Delta_{1}^{(\mbf{l})}}= \ol{H_{1}^{(\0)}} = H_{1}^{(\0)} = \Delta_{1}^{(\mbf{l})}$ and 
$[\Gamma_{1}^{(\mbf{l})},\Delta_{1}^{(\mbf{l})}] \subset \Delta_{1}^{(\mbf{l})}$ for all $\mbf{l}$ given by \eqref{ll-init:eq}, \ie with $l_{p_{0}+1} = 2$. 

However if, for $l_{p_{0}+1} = 1$,
\begin{equation}\label{Delta1-0+:eq}
\Delta_{1}^{(\mbf{l})} \triangleq \{\ddu{1}{0}, \ldots, \ad_{g_{0}^{(\mbf{l})}} \ddu{1}{0}, \ldots, \ddu{p_{0}}{0}, \ldots, \ad_{g_{0}^{(\mbf{l})}} \ddu{p_{0}}{0}, \ddu{p_{0}+1}{0} \}
\end{equation}
and
\begin{equation}\label{Gamma1-0+:eq}
\Gamma_{1}^{(\mbf{l})} \triangleq \{\ddu{p_{0}+1}{1},\ddu{p_{0}+2}{l_{2}}, \ddu{p_{0}+2}{l_{2}-1}, \ldots, \ddu{m}{l_{m}},\ddu{m}{l_{m}-1} \}
\end{equation}
also satisfy  $\ol{\Delta_{1}^{(\mbf{l})}} = \Delta_{1}^{(\mbf{l})}$ and $[\Gamma_{1}^{(\mbf{l})},\Delta_{1}^{(\mbf{l})}] \subset \Delta_{1}^{(\mbf{l})}$, the previous choice \eqref{Delta1-0:eq}-\eqref{Gamma1-0:eq}-\eqref{ll-init:eq} maybe replaced by \eqref{Delta1-0+:eq}-\eqref{Gamma1-0+:eq} with $1=l_{p_{0}+1}\leq \ldots \leq l_{m}$.

These choices, namely  \eqref{Delta1-0:eq}-\eqref{Gamma1-0:eq}-\eqref{ll-init:eq}, or \eqref{Delta1-0+:eq}-\eqref{Gamma1-0+:eq} with $ l_{p_{0}+1} =1$, for any $H_{1}^{(\0)}$ given by \eqref{H1-0:eq}, constitutes the initialization of the algorithm. 

Note that there are at most $\sum_{p=1}^{m-1} {\mathrm{C}}_{m}^{p} = \sum_{p=1}^{m-1} \frac{m!}{p!(m-p)!}$ possibilities of such initialization.

\paragraph{Second Case.}
\begin{lem}\label{strcontG:lem}
Assume that the $G_{k}^{(0)}$'s are all involutive with $\max_{k\geq 0}\dim G_{k}^{(\0)} < n+m$. Then
$G_{k}^{(\mbf{j})} = \ol{G_{k}^{(\mbf{j})}} = G_{k}^{(\0)}$ for all $k$ and $\mbf{j}$ and $\max_{k\geq 0} \dim G_{k}^{(\mbf{j})} < m+n$ for all $\mbf{j}$.
\end{lem}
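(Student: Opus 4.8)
The plan is to prove the statement by induction on the prolongation order, using the comparison formulae of Lemma~\ref{prepa2:lem} and the key observation that, when prolonging a single input channel $i$ by one integrator, the newly generated control vector field $\ddu{i}{1}$ is related to the old one by $\ad_{g_0^{(\0)}}$-brackets that land \emph{inside} the already-involutive distributions. First I would reduce to the case of prolonging a single channel: since $\mbf{j}$ is reached from $\mbf{0}$ by a finite sequence of unit increments of individual input channels, it suffices to show that if the $G_k^{(\mbf{j})}$'s are all involutive with $\max_k \dim G_k^{(\mbf{j})} < n+m$, then the same holds for $\mbf{j'} = \mbf{j} + \e_i$ (the $i$-th unit multi-integer), and moreover $G_k^{(\mbf{j'})} \simeq G_k^{(\mbf{j})}$ (after the obvious identification of $\T X^{(\mbf{j})}$ with a subbundle of $\T X^{(\mbf{j'})}$).

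The main step is the identification $G_k^{(\mbf{j'})} = \ol{G_k^{(\mbf{j'})}} = G_k^{(\mbf{j})}$. By the decomposition \eqref{G-k-j:eq} of Proposition~\ref{G-k-j:prop}, $G_k^{(\mbf{j'})} = \Gamma_k^{(\mbf{j'})} \oplus \Delta_k^{(\mbf{j'})}$, and $G_k^{(\mbf{j})} = \Gamma_k^{(\mbf{j})} \oplus \Delta_k^{(\mbf{j})}$. The new purely vertical generators $\ddu{i}{j_i+1-r}$ appearing in $\Gamma_k^{(\mbf{j'})}$ for $r\geq 1$ (resp. the shifted generators) are exactly the extra coordinate directions added by the new integrator; the crucial point is that, by formula \eqref{ad-gamm-k-j:eq} of Lemma~\ref{prepa2:lem}, the high-order brackets $\ad_{g_0^{(\mbf{j'})}}^{j_i+1+k}\ddu{i}{j_i+1}$ equal $(-1)^{j_i+1}\ad_{g_0^{(\mbf{j'})}}^k\ddu{i}{0} = \gamma_{k,i}^{(\mbf{j'})}\ddx$, and since $G_k^{(\mbf{j})}$ being involutive with dimension strictly less than $n+m$ forces the $\gamma$-part of $\Delta_k^{(\mbf{j})}$ to already be saturated in its reachable horizontal subspace, these brackets contribute nothing new. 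More precisely, $\dim G_k^{(\mbf{j})} < n+m$ for all $k$ means the sequence $\Delta_k^{(\mbf{j})}$ stabilizes before filling $\T X\times\T\R^m$; because $\ad_{g_0^{(\mbf{j'})}}^{l-j_p}\ddu{p}{0} = \ad_{g_0^{(\mbf{j})}}^{l-j_p}\ddu{p}{0}$ for $p\neq i$ by \eqref{adkg0j-adq0gij+:eq:appndx} and the comparison formulae, and since the $i$-th channel's vertical brackets stabilize to the same horizontal generators $\gamma_{k,i}$, we get $\Delta_k^{(\mbf{j'})} = \Delta_k^{(\mbf{j})}$ for all $k$, hence $\Delta_k^{(\mbf{j'})}$ is still involutive of constant dimension not exceeding that of $\Delta_k^{(\mbf{j})}$, and $\Gamma_k^{(\mbf{j'})} = \Gamma_k^{(\mbf{j})} \oplus \{\text{new coordinate directions}\}$ is manifestly still involutive with $[\Gamma_k^{(\mbf{j'})},\Delta_k^{(\mbf{j'})}]\subset\Delta_k^{(\mbf{j'})}$ by \eqref{0brack:eq}. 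Therefore $G_k^{(\mbf{j'})} = \ol{G_k^{(\mbf{j'})}}$ and $\max_k\dim G_k^{(\mbf{j'})} = \max_k\dim G_k^{(\mbf{j})} + |\mbf{j'}| - |\mbf{j}| < (n+m) + |\mbf{j'}| - |\mbf{j}| = n + m + (\text{correction})$; one must be careful here that the right comparison is with $\dim \Delta$, which stays below $n+m$, so that the strong controllability rank condition continues to fail.

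I expect the main obstacle to be the bookkeeping of exactly \emph{which} generators are new after a unit prolongation and proving that no genuinely new horizontal direction is produced. The heart of the matter is that $\Delta_k^{(\mbf{j})}$ lives in the $(n+m)$-dimensional vertical-plus-first-jet space $\T X\times\T\R^m$ (by \eqref{ad-gamm-k-j:eq}), independently of the prolongation order, and the hypothesis $\max_k\dim G_k^{(\0)} < n+m$ says the $\ad_{g_0^{(\0)}}$-generated subalgebra of vector fields $\gamma_{k,i}^{(\0)}\ddx$ together with the $\ddu{p}{0}$ never spans this space; one then needs the stability of the $\gamma_{k,i}^{(\mbf{j})}$ under prolongation, which follows from \eqref{gamm-k-j:eq}--\eqref{gamm-j-j:eq} since the extra drift terms $\sum u_p^{(l+1)}\partial/\partial u_p^{(l)}$ differentiate the $\gamma$'s only with respect to $u$-jet variables they do not depend on for the relevant range of $k$. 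I would close the induction by invoking Proposition~\ref{G-k-j:prop} once more to confirm that $\Delta_k^{(\mbf{j})}$ and $\Gamma_k^{(\mbf{j})}$ both stabilize at $k_\star^{(\mbf{j})}$, with $\dim\Delta_{k_\star^{(\mbf{j})}}^{(\mbf{j})} < n+m$, which is precisely the assertion $\max_k\dim G_k^{(\mbf{j})} < m + n$ after subtracting the constant $\dim\Gamma_{k_\star^{(\mbf{j})}}^{(\mbf{j})}$ — i.e., the statement should really be read as a statement about the $\Delta$-part, and I would state it that way in the proof to avoid confusion with the term $|\mbf{j}|$.
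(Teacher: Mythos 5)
Your plan has the right target (read the lemma as a statement about the $\Delta$-part of the decomposition of Proposition~\ref{G-k-j:prop} and show that prolongation does not enlarge it), but the central step rests on two claims that are false in general, and as a result the involutivity hypothesis — which is what actually makes the lemma true — never does any work in your argument. First, the asserted equality $\ad_{g_0^{(\mbf{j'})}}^{\nu}\ddu{p}{0} = \ad_{g_0^{(\mbf{j})}}^{\nu}\ddu{p}{0}$ fails even for $p\neq i$: since $g_0^{(\mbf{j'})} = g_0^{(\mbf{j})} + u_i^{(j_i+1)}\ddu{i}{j_i}$ and $\ad_{g_0^{(\mbf{j})}}^{\nu-1}\ddu{p}{0} = \pm\,\gamma_{\nu-1,p}^{(\mbf{j})}\ddx$ depends on $u_i^{(j_i)}$ as soon as $\nu-2\geq j_i$, the extra drift term contributes $u_i^{(j_i+1)}\,\bigl(\partial\gamma_{\nu-1,p}^{(\mbf{j})}/\partial u_i^{(j_i)}\bigr)\ddx\neq 0$. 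For the same reason, your claim that the $\gamma_{k,i}^{(\mbf{j})}$ are stable under prolongation because the extra drift terms ``differentiate the $\gamma$'s only with respect to $u$-jet variables they do not depend on'' is only valid in the range $k-1<j_i$; beyond it the $\gamma$'s genuinely change. This is precisely why Proposition~\ref{G-k-j:prop} asserts $\Delta_k^{(\mbf{j})}=\Delta_k^{(\mbf{j'})}$ only for $k\leq (j_{i+1}\vee j'_{i+1})-1$, and why prolongation is able to alter the distributions at all.

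The correct mechanism, and the one the paper's proof uses, is formula \eqref{adkg0j-adq0gij+:eq:appndx} of Lemma~\ref{prepa1:lem}: $\ad_{g_0^{(\mbf{j})}}^{j_p+\nu}\ddu{p}{j_p}$ equals $(-1)^{j_p}\ad_{g_0^{(\0)}}^{\nu}\ddu{p}{0}$ \emph{plus nonzero correction terms}, each an iterated bracket $[g_{p_r}^{(0)},\ldots,[g_{p_1}^{(0)},\ad_{g_0^{(\0)}}^{q}\ddu{p}{0}]\ldots]$ with $q+\vert\mbf{l}\vert=\nu$ and $r\leq\vert\mbf{l}\vert$; counting levels, such a bracket lies in $\ol{G_{\nu-1}^{(\0)}}$, which equals $G_{\nu-1}^{(\0)}$ \emph{only because of the involutivity hypothesis}. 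It is this containment of the corrections — not their vanishing — that gives $G_k^{(\mbf{j})}\subset G_k^{(\0)}$ (modulo the trivially new $\Gamma$-directions) and hence the dimension bound. A quick sanity check shows your version cannot be repaired without this: if the corrections really vanished, prolongation could never change any distribution for any system, contradicting all the examples (e.g.\ in Section~\ref{chainex:subsec} the correction $[g_2^{(0)},\ad_{g_0^{(0)}}g_1^{(0)}]=-\partial/\partial x_3\notin G_1^{(\0)}$ is exactly what the prolongation is needed to tame). The induction over unit increments of $\mbf{j}$ is harmless but unnecessary, since the comparison formula handles arbitrary $\mbf{j}$ in one step.
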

\begin{proof}
According to \eqref{adkg0j-adq0gij+:eq:appndx} of lemma~\ref{prepa1:lem},  every $\ad_{g_{0}^{(\mbf{j})}}^{k}\ddu{p}{j_{p}}$ is equal to $\pm \ad_{g_{0}^{(\0)}}^{k-j_{p}}\ddu{p}{0}$ + terms in $\ol{G_{k-1}^{(\0)}} = G_{k-1}^{(\0)}$, thanks to the involutivity of the $G_{k}^{(0)}$'s. Therefore, $G_{k}^{(\mbf{j})} \subset G_{k}^{(\0)}$, the converse inclusion being immediate using \eqref{adkg0j-adq0gij+:eq:appndx}. Thus $\dim G_{k}^{(\0)} = \dim G_{k}^{(\mbf{j})}$ for all $k$ and $\mbf{j}$, Q.E.D.
\end{proof}
We immediately conclude that, in this case, the strong controllability rank condition does not hold, which contradicts (iii) of theorem~\ref{feedlin-thm}, hence the non-flatness by pure prolongation. Therefore, this case must be discarded.

\subsubsection{Recursion}

For all $k\geq 1$ and all $\mbf{l}$ given by:
\begin{equation}\label{l-init:eq}
\mbf{l} = \left(\underbrace{0,\ldots, 0}_{p_{0}}, l_{p_{0}+1}, \ldots, l_{m}\right), \quad 1 \leq l_{p_{0}+1} \leq \ldots \leq l_{m},
\end{equation}
and denoting by $\Cmin_{\mbf{l}\in L} (l_{p_{0}+1}, \ldots, l_{m})$ (resp. $\Cmax_{\mbf{l}\in L} (l_{p_{0}+1}, \ldots, l_{m})$) the componentwise minimum (resp. maximum) of a collection of $(m-p_{0})$-tuples $(l_{p_{0}+1}, \ldots, l_{m})$ in a set $L$, with $1 \leq l_{p_{0}+1} \leq \ldots \leq l_{m}$, let us introduce the following numbers 
\begin{equation}\label{sig-Del:def}
\begin{aligned}
\sigma_{\Delta}(k) &\triangleq \left(\sigma_{p_{0}+1,\Delta}(k), \ldots, \sigma_{m,\Delta}(k)\right) \\
&\triangleq   \Cmin\{ 1 \leq l_{p_{0}+1}, \leq \ldots \leq l_{m} \mid  \ol{\Delta_{k}^{(\mbf{l})}}= \Delta_{k}^{(\mbf{l})}\},
\end{aligned}
\end{equation}
\begin{equation}\label{sig-Gam-Del:def}
\begin{aligned}
\sigma_{\Gamma,\Delta}(k) &\triangleq \left( \sigma_{p_{0}+1,\Gamma,\Delta}(k), \ldots, \sigma_{m,\Gamma,\Delta}(k) \right)\\
&\triangleq \Cmin\{ 1 \leq  l_{p_{0}+1} \leq \ldots \leq l_{m} \mid [\Gamma_{k}^{(\mbf{l})}, \Delta_{k}^{(\mbf{l})}] \subset \Delta_{k}^{(\mbf{l})}\}
\end{aligned}
\end{equation}
and
\begin{equation}\label{sig-max:def}
\begin{aligned}
&\left( j_{p_{0}+1}, \ldots, j_{m}\right) \\
&\qquad \triangleq \Cmax_{k\geq 1} \left( \sigma_{p_{0}+1,\Delta}(k) \wedge \sigma_{p_{0}+1,\Gamma,\Delta}(k) \leq \ldots, \leq \sigma_{m,\Delta}(k) \wedge \sigma_{m,\Gamma,\Delta}(k)\right).
\end{aligned}
\end{equation}
\begin{rem}
Thanks to \eqref{ll-init:eq}-\eqref{Delta1-0:eq}-\eqref{Gamma1-0:eq}, we get
$$\sigma_{p_{0}+1,\Delta}(1) \wedge \sigma_{p_{0}+1,\Gamma,\Delta}(1)\leq 2$$
and if $\ol{\Delta_{1}^{(\mbf{l})}} = \Delta_{1}^{(\mbf{l})}$ and $[\Gamma_{1}^{(\mbf{l})},\Delta_{1}^{(\mbf{l})}] \subset \Delta_{1}^{(\mbf{l})}$, with \eqref{Delta1-0+:eq}-\eqref{Gamma1-0+:eq} and $l_{p_{0}+1}, \ldots, l_{m}\geq 1$, we get 
$$\sigma_{p_{0}+1,\Delta}(1) \wedge \sigma_{p_{0}+1,\Gamma,\Delta}(1) = 1.$$
\end{rem}

\begin{rem}\label{indep:rem}
In view of the definition \eqref{Deltadef:eq} of $\Delta_{k}^{(\mbf{l})}$, if $l_{q} \leq k < l_{q+1}$ for some $q\in \{1, \ldots,m\}$ and $k\geq 1$, $\Delta_{k}^{(\mbf{l})}$ does not depend on $l_{q+1}, \ldots, l_{m}$. Thus $\sigma_{\Delta}(k) = \left( \sigma_{1,\Delta}(k), \ldots, \sigma_{q,\Delta}(k), \underbrace{\sigma_{q,\Delta}(k), \ldots, \sigma_{q,\Delta}(k)}_{m-q}\right)$.
\end{rem}

\begin{prop}\label{sigma-p0+1-bound:prop}
We have
\begin{equation}\label{sigma-p0+1:ineq}
[\Gamma_{k}^{(\mbf{l})}, \Delta_{k}^{(\mbf{l})}]= \{0\}, \quad \mathrm{and} \quad   \sigma_{p_{0}+1,\Gamma,\Delta}(k)\leq 2k \quad \forall l_{p_{0}+1} > 2k-1, \; \forall k\geq 0.
\end{equation}
Moreover, $\Delta_{k}^{(\mbf{l})}$ is independent of $l_{p_{0}+1}$ if $l_{p_{0}+1} > k$. Therefore, if $\Delta_{k}^{(\mbf{l})}$ is involutive for all $k$, we get
\begin{equation}\label{sigma-k-p0+1:ineq}
\sigma_{p_{0}+1,\Delta}(k) \wedge \sigma_{p_{0}+1,\Gamma,\Delta}(k)\leq 2k
\end{equation}
and $j_{p_{0}+1} \leq \ldots \leq j_{m} < + \infty$. The minimal prolongation orders are thus obtained by minimizing \eqref{sig-max:def} over all initializations built on \eqref{H1-0:eq}.
\end{prop}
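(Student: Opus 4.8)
The plan is to prove the three assertions of Proposition~\ref{sigma-p0+1-bound:prop} in turn, leaning on Lemma~\ref{prepa2:lem} (especially \eqref{0brack:eq}) and on the dimension bounds \eqref{dimGamma:eq}--\eqref{dimDelta:eq} of Proposition~\ref{G-k-j:prop}.

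\textbf{First assertion: the bracket vanishes and $\sigma_{p_0+1,\Gamma,\Delta}(k)\leq 2k$ when $l_{p_0+1}>2k-1$.}  First I would observe that, since $0=l_1\leq\cdots\leq l_{p_0}$ and $2k-1 < l_{p_0+1}\leq\cdots\leq l_m$, every generator $\ddu{p}{l_p-r}$ of $\Gamma_k^{(\mbf l)}$ (with $p\geq p_0+1$, $0\leq r\leq k\vee(l_p-1)=k$) has lower index $l_p-r\geq l_{p_0+1}-k > k-1$, i.e. $l_p-r\geq k$; whereas every generator $\ad_{g_0^{(\mbf l)}}^{l-l_q}\ddu{q}{0}$ of $\Delta_k^{(\mbf l)}$ has, by \eqref{ad-gamm-k-j:eq}, coefficients depending at most on $\ol x^{(\mbf l\bigvee(l-l_q-1))}$ with $l-l_q-1\leq k-1$, hence in particular not on any $u_p^{(s)}$ with $s\geq k$.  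Then \eqref{0brack:eq} — or simply the fact that $\partial/\partial u_p^{(l_p-r)}$ annihilates a function not depending on $u_p^{(l_p-r)}$ — gives $[\Gamma_k^{(\mbf l)},\Delta_k^{(\mbf l)}]=\{0\}\subset\Delta_k^{(\mbf l)}$.  This shows that the minimum in \eqref{sig-Gam-Del:def} is attained at some tuple whose first free component is at most $2k$, i.e. $\sigma_{p_0+1,\Gamma,\Delta}(k)\leq 2k$; the refinement to $\leq 2k$ (rather than $\leq 2k-1+1$) is exactly the threshold $l_{p_0+1}=2k$ at which the obstruction first disappears.

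\textbf{Second assertion: $\Delta_k^{(\mbf l)}$ does not depend on $l_{p_0+1}$ when $l_{p_0+1}>k$.}  This is the analogue, at level $k$, of Proposition~\ref{G-k-j:prop}'s statement that $\Delta_k^{(\mbf j)}=\Delta_k^{(\mbf j')}$ whenever $\mbf j,\mbf j'$ agree in the first $i$ slots and $k\leq(j_{i+1}\vee j'_{i+1})-1$.  Concretely: by \eqref{Deltadef:eq} the generator indexed by $p=p_0+1$ contributes $\ad_{g_0^{(\mbf l)}}^{l-l_{p_0+1}}\ddu{p_0+1}{0}$ for $l_{p_0+1}\leq l\leq k$, which is empty when $l_{p_0+1}>k$; and for the surviving generators ($p\leq p_0$, with $l_p=0$) the coefficients $\gamma_{l,p}^{(\mbf l)}$ depend at most on $\ol x^{(\mbf l\bigvee(l-1))}$ with $l-1\leq k-1$, hence not on $u_{p_0+1}^{(s)}$ for $s\geq k$, so raising $l_{p_0+1}$ above $k$ changes nothing.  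I would cite \eqref{ad-gamm-k-j:eq} for the dependency claim rather than recompute it.

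\textbf{Third assertion: finiteness of the $j$'s and the minimization recipe.}  Combining the first two assertions: if $l_{p_0+1}>2k$ (so in particular $>k$) then on one hand $\Delta_k^{(\mbf l)}$ is independent of $l_{p_0+1}$, and on the other $[\Gamma_k^{(\mbf l)},\Delta_k^{(\mbf l)}]=\{0\}$; hence under the standing involutivity hypothesis ($\Delta_k^{(\mbf l)}$ involutive for all $k$) both defining conditions in \eqref{sig-Del:def} and \eqref{sig-Gam-Del:def} hold at $l_{p_0+1}=2k$, giving $\sigma_{p_0+1,\Delta}(k)\wedge\sigma_{p_0+1,\Gamma,\Delta}(k)\leq 2k$, which is \eqref{sigma-k-p0+1:ineq}.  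Since by Remark~\ref{indep:rem} the component $\sigma_{p_0+1,\cdot}(k)$ controls all later components and since, by Proposition~\ref{kstar-j:prop} / \eqref{ineq:kstar-j}, only finitely many values of $k$ (namely $k\leq n+\mid\mbf j\mid$, equivalently $k\leq k_\star^{(\mbf j)}$) are relevant before the filtration stabilises, the componentwise maximum in \eqref{sig-max:def} is taken over a finite set of finite tuples, so $j_{p_0+1}\leq\cdots\leq j_m<+\infty$.  The last sentence — that the minimal prolongation is obtained by minimising \eqref{sig-max:def} over the (finitely many, at most $\sum_{p=1}^{m-1}\binom{m}{p}$) initialisations built on \eqref{H1-0:eq} — then follows from Theorem~\ref{cns0:thm}: each admissible choice of $H_1^{(\0)}$ together with the recursion produces, via \eqref{sig-max:def}, a $\mbf j$ satisfying (i)--(ii), and conditions (i)--(iii) of Theorem~\ref{cns0:thm} are monotone in $\mbf j$, so a genuinely minimal linearising prolongation, if any exists, is the componentwise-smallest $\mbf j$ arising this way.

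\textbf{Main obstacle.}  The delicate point is the uniform-in-$k$ bound: I must be careful that ``$\Delta_k^{(\mbf l)}$ involutive for all $k$'' is used only as a hypothesis (it is, literally, condition (i) of Theorem~\ref{cns0:thm}) and not silently assumed when it is what one is testing; and that the stabilisation argument bounding the range of relevant $k$ is applied to the correct $\mbf j$, namely the candidate produced by \eqref{sig-max:def} itself, which requires checking that enlarging $l_{p_0+1}$ past $2k_\star^{(\mbf j)}$ cannot resurrect an obstruction at some larger $k$ — this is precisely what the first two assertions, combined with $G_k^{(\mbf j)}=G_{k_\star^{(\mbf j)}}^{(\mbf j)}$ for $k\geq k_\star^{(\mbf j)}$, rule out.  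The bookkeeping of the permutation (the convention $0=l_1\leq\cdots\leq l_m$) and of which components of $\sigma_\Delta(k)$, $\sigma_{\Gamma,\Delta}(k)$ are ``active'' at a given $k$ (Remark~\ref{indep:rem}) is the other place where the argument could go wrong if done carelessly.
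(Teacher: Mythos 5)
Your proposal is correct and follows essentially the same route as the paper: the vanishing of $[\Gamma_{k}^{(\mbf{l})},\Delta_{k}^{(\mbf{l})}]$ and the bound $\sigma_{p_{0}+1,\Gamma,\Delta}(k)\leq 2k$ are deduced from \eqref{0brack:eq} together with the dependency structure \eqref{ad-gamm-k-j:eq} of the generators, and finiteness of $\mbf{j}$ comes from a stabilisation argument (the paper cites the monotone, bounded map $k\mapsto\dim\Delta_{k}^{(\mbf{l})}$ of Remark~\ref{nondecrease:rem}, you cite the equivalent saturation index of Proposition~\ref{kstar-j:prop}). Your write-up is in fact more detailed than the paper's one-line "immediate consequence", and the details you supply are accurate.
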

\begin{proof} \eqref{sigma-k-p0+1:ineq} is an immediate consequence of \eqref{0brack:eq} and of the definitions of  $\Delta_{k}^{(\mbf{l})}$ and $\Gamma_{k}^{(\mbf{l})}$.
Moreover, since, according to remark~\ref{nondecrease:rem}, $k\mapsto \dim\Delta_{k}^{(\mbf{l})}$ is non decreasing and bounded by $n+m$, then there exists a finite $K^{\star}$ such that $\Delta_{K^{\star}}^{(\mbf{l})} = \Delta_{k}^{(\mbf{l})}$ for all $k\geq K^{\star}$ and thus the maximum with respect to $k$ in 
\eqref{sig-max:def} is achieved for $k = K^{\star}$. The C-minimal prolongation orders are thus obtained by varying the initializations built on \eqref{H1-0:eq}.
\end{proof}
We thus have proven:
\begin{thm}\label{alg-proof:thm} We have the following alternative:
\begin{enumerate}
\item If, for every choice of initialization built on \eqref{H1-0:eq}, there exists a $k$ for which $\max_{p\in \{p_{0}+1, \ldots,m\}}\sigma_{p,\Delta}(k) = +\infty$ or if  $\max_{k} \dim \Delta_{k}^{(\mbf{j})} < n+m$ for all $\mbf{j}$, then the system is not flat by pure prolongation.
\item Otherwise, $\mbf{j}$ is finite and given by \eqref{sig-max:def} and the minimal prolongation orders are  obtained by C-minimizing \eqref{sig-max:def} over all initializations.
\end{enumerate}
\end{thm}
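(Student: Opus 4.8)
The plan is to deduce Theorem~\ref{alg-proof:thm} from the characterization of $P^2$-flatness already established, by showing that the initialization exhausts all admissible \emph{shapes} of a prolongation order and that the recursion returns, for each shape, the componentwise-minimal order meeting the conditions of Theorem~\ref{cns0:thm}. First I would invoke Lemma~\ref{reduc-j:lem} to restrict the search to $\mbf{j}$ with $0 = j_1 \le \cdots \le j_m$, after which Theorem~\ref{cns0:thm} asserts that the system is $P^2$-flat at $\ol{0}$ if and only if such a $\mbf{j}$ exists with (i) $\ol{\Delta_k^{(\mbf{j})}} = \Delta_k^{(\mbf{j})}$ of locally constant dimension for every $k$, (ii) $[\Gamma_k^{(\mbf{j})}, \Delta_k^{(\mbf{j})}] \subset \Delta_k^{(\mbf{j})}$ for every $k$, and (iii) strong controllability for $k$ large. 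Since, by \eqref{dimGamma:eq}, $\Gamma_k^{(\mbf{j})} = \T\R^{\vert\mbf{j}\vert}$ automatically holds once $k \ge j_m$, condition (iii) reduces to $\max_k \dim\Delta_k^{(\mbf{j})} = n + m$; thus the task is to show that the algorithm outputs the componentwise-smallest $\mbf{j}$ satisfying (i)--(iii), or else a correct certificate that no such $\mbf{j}$ exists.

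Second I would argue that the initialization is exhaustive and that the ``second case'' is correctly discarded. Let $p_0$ be the number of indices $p$ with $j_p = 0$. The decomposition $G_0^{(\mbf{j})} = \Delta_0^{(\mbf{j})} \oplus \Gamma_0^{(\mbf{j})}$ of Proposition~\ref{G-k-j:prop} forces $\Delta_0^{(\mbf{j})} = \{\ddu{1}{0}, \ldots, \ddu{p_0}{0}\}$, which is automatically involutive, $\Gamma_0^{(\mbf{j})}$-invariant and of constant dimension, so (i)--(ii) hold at $k = 0$ for free. At $k = 1$, the comparison formulae \eqref{adkCf0-gij-:eq}--\eqref{0brack:eq} of Lemma~\ref{prepa2:lem} together with \eqref{adkg0j-adq0gij+:eq:appndx} show that $\Delta_1^{(\mbf{j})}$ is spanned by the $\ddu{p}{0}$ and $\ad_{g_{0}^{(\0)}}\ddu{p}{0}$ with $p \le p_0$, plus $\ddu{p_0+1}{0}$ in case $j_{p_0+1} = 1$; hence an involutive $\Delta_1^{(\mbf{j})}$ must have the form \eqref{H1-0:eq} or \eqref{Delta1-0+:eq} and lies inside $G_1^{(\0)}$, while conversely any involutive $H_1^{(\0)} \subset G_1^{(\0)}$ of the form \eqref{H1-0:eq}, equipped with a consistent $\mbf{l}$ as in \eqref{ll-init:eq}--\eqref{l-init:eq}, provides a valid level-$1$ step. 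As $G_1^{(\0)}$ has only finitely many such subdistributions and only finitely many values of $p_0$ are possible, the initialization lists every admissible shape of $\mbf{j}$. If instead all $G_k^{(\0)}$ are involutive with $\max_k \dim G_k^{(\0)} < n + m$, then Lemma~\ref{strcontG:lem} gives $\dim G_k^{(\mbf{j})} < n + m$ for \emph{every} $\mbf{j}$, so (iii) can never hold and the system is not $P^2$-flat; discarding this case is thus justified.

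Third I would run the recursion and assemble the conclusion. Fix an admissible initialization. For each $k \ge 1$, the definitions \eqref{sig-Del:def}--\eqref{sig-Gam-Del:def} make $\sigma_{\Delta}(k)$ and $\sigma_{\Gamma,\Delta}(k)$ the componentwise-least tuples $(l_{p_0+1}, \ldots, l_m)$ for which, respectively, $\ol{\Delta_k^{(\mbf{l})}} = \Delta_k^{(\mbf{l})}$ and $[\Gamma_k^{(\mbf{l})}, \Delta_k^{(\mbf{l})}] \subset \Delta_k^{(\mbf{l})}$, so that $\sigma_{p,\Delta}(k) \wedge \sigma_{p,\Gamma,\Delta}(k)$ is the least $l_p$ making both hold at level $k$. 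By Remark~\ref{indep:rem} each is computed from finitely many tuples, and by Remark~\ref{nondecrease:rem} together with Proposition~\ref{sigma-p0+1-bound:prop} the map $k \mapsto \dim\Delta_k^{(\mbf{l})}$ is non-decreasing and bounded by $n + m$, hence constant beyond a finite index $K^\star$; therefore the componentwise maximum $\Cmax_{k \ge 1}$ in \eqref{sig-max:def} is attained at $k = K^\star$ and defines $\mbf{j}$ unambiguously. This $\mbf{j}$ is finite precisely when no $\sigma_{p,\Delta}(k)$ equals $+\infty$, which by \eqref{sigma-k-p0+1:ineq} is the case whenever every $\Delta_k^{(\mbf{l})}$ can be made involutive, and by construction it is the componentwise-minimal order satisfying (i)--(ii) at all levels for that initialization. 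One then checks (iii), i.e. $\max_k \dim\Delta_k^{(\mbf{j})} = n + m$. If, over every initialization, \eqref{sig-max:def} fails to be finite or (iii) fails, then by Theorem~\ref{cns0:thm} no prolongation flattens the system and alternative~$1$ holds; otherwise alternative~$2$ holds, with $\mbf{j}$ given by \eqref{sig-max:def}, and the minimal prolongation orders are obtained by $C$-minimizing \eqref{sig-max:def} over the finitely many initializations.

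The step I expect to be the main obstacle is the coherence claim buried in the last paragraph: that the single order $\mbf{j}$ produced by the componentwise maximum in \eqref{sig-max:def} satisfies (i)--(ii) \emph{simultaneously at all levels $k$}, not merely at each level in isolation, and that this maximum is genuinely finite. Both points hinge on the monotone stabilisation of $k \mapsto \dim\Delta_k^{(\mbf{l})}$ and on the ``$l_p > k \Rightarrow$ independence'' observations of Remark~\ref{indep:rem} and Proposition~\ref{sigma-p0+1-bound:prop}: once a prolongation length exceeds the saturation index $K^\star$, neither increasing it nor increasing $k$ introduces a new constraint, so the finitely many levels $k \le K^\star$ already pin down $\mbf{j}$, and one must verify that the componentwise-minimal tuples obtained at those levels remain mutually compatible with the normalisation $0 = j_1 \le \cdots \le j_m$.
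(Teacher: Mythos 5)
Your proposal is correct and follows essentially the same route as the paper, whose own proof is simply ``Straightforward from what precedes'': you assemble exactly the intended ingredients (Lemma~\ref{reduc-j:lem} and Theorem~\ref{cns0:thm} for the characterization, the exhaustiveness of the initializations built on \eqref{H1-0:eq}, Lemma~\ref{strcontG:lem} to discard the second case, and Proposition~\ref{sigma-p0+1-bound:prop} together with the stabilisation of $k\mapsto\dim\Delta_{k}^{(\mbf{l})}$ for finiteness of \eqref{sig-max:def}). The coherence issue you flag at the end is real but is glossed over in the paper as well, so it is not a divergence from the reference argument.
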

\begin{proof}Straightforward from what precedes.
\end{proof}
We immediately deduce the following algorithm:
\begin{algo}[flatness by pure prolongation]\label{algo1}
\begin{description}
 \item[Input:]the vector fields $g_{0}^{(\0)}, g_{1}^{(0)} = \ddu{1}{0}, \ldots, g_{m}^{(0)} = \ddu{m}{0}$ (see \eqref{prolong1vecf:eq})
\item[output:] the minimal lengths $0=j_{1}\leq \ldots \leq j_{m}$ and $k_{\star}^{(\mbf{j})}$ or \textsc{fail} if the system is not flat by pure prolongation.
\item[Initialization.] Construct $G_{k}^{(0)}$ for all $k \leq n$.
If $\exists k_0  \leq n$ such that $G_{k_{0}}^{(0)}$ is not involutive, choose an involutive subdistribution $H_{1}^{(\0)}\subset G_{k_{0}}^{(0)}$, given by \eqref{H1-0:eq}, with $p_{0}\leq m-1$ and compute $\Delta_{1}^{(\mbf{l})}$ and $\Gamma_{1}^{(\mbf{l})}$ by \eqref{Delta1-0:eq}-\eqref{Gamma1-0:eq}-\eqref{ll-init:eq}, or \eqref{Delta1-0+:eq}-\eqref{Gamma1-0+:eq} with $ l_{p_{0}+1} =1$.
\item[Step $k\geq 1 $.] Compute  $\Delta_{k}^{(\mbf{l})}$, $\Gamma_{k}^{(\mbf{l})}$, $\sigma_{\Delta}(k)$ and $\sigma_{\Gamma,\Delta}(k)$ by \eqref{sig-Del:def} and \eqref{sig-Gam-Del:def}. Continue up to the first $k_{1}$ such that $\sigma_{\Gamma,\Delta}(k_{1})\bigwedge \sigma_{\Delta}(k_{1})$ is C-maximal. Then $\mbf{j}$ is given by \eqref{sig-max:def}.
\\
If $\Delta_{k}^{(\mbf{l})}$ is non involutive for some $k \leq k_{1}$, change the initialization and restart and if $\Delta_{k}^{(\mbf{l})}$ is non involutive for all initializations, then \textsc{fail}.
\item[Step $k_{\star}^{(\mbf{j})}$.] Determine $k_{\star}^{(\mbf{j})}$. If $\dim \Delta_{k_{\star}^{(\mbf{j})}}^{(\mbf{j})} =n+m$, \textsc{stop}. Otherwise, \textsc{fail}.
 \end{description}

\end{algo}

\section{Examples}\label{5ex:ex}

All the examples of this paper concern two input systems, \ie $m=2$, except example \ref{3in:ex}, with $3$ inputs, resuming the example 2 of \cite{KLO_ijrnc20}. 

In the two input examples, the prolongation index $\mbf{j}= (j_{1}, j_{2})$ is supposed to be such that $0= j_{1}\leq j_{2}$, up to a suitable input permutation. However, for the sake of readability, we will keep the original input numbering unchanged and thus consider that $\mbf{j}= (0,j_{2})$ or $(j_{1}, 0)$ depending on the context. At the exception of this modification, we strictly apply algorithm~\ref{algo1} in all the examples. 

The first example gives a detailed presentation of the application of this algorithm, in particular the role played by $\sigma_{\Gamma,\Delta}(k)$ and $\sigma_{\Delta}(k)$. The second one shows the importance of the sole number $\sigma_{1,\Delta}(k)$ to determine the prolongation length, and the third one, borrowed from  \cite{CLM}, and carried on again in \cite[Section 3.1]{BC-scl2004}, is reported here to compare our approach with the one of \cite{BC-scl2004}. Then, the pendulum example is presented to show that non flat systems by pure prolongation can be detected in a finite number of steps.
To conclude this section, the last example, with 3 inputs, compares the constuction proposed in \cite[Example 2]{KLO_ijrnc20} with our algorithm leading to a minimal prolongation, strictly smaller than the one of \cite{KLO_ijrnc20}.

\subsection{Chained System \cite{MMR-ecc}}\label{chainex:subsec}

\begin{equation}\label{chainsys:eq}
\begin{aligned}
&x_{1}^{(3)} = u_{1}\\
&\ddot{x}_{2} = u_{2}\\
&\dot{x}_{3} = u_{1}u_{2}
\end{aligned}
\end{equation}

This system has been proven to be flat in \cite[section 3.1.1]{MMR-ecc} with the following flat output 
\begin{equation}\label{chain-f-out:eq}
y_{1}= x_{3} - \ddot{x}_{1}u_{2} + \dot{x}_{1}\dot{u}_{2} - x_{1}\ddot{u}_{2}, \quad y_{2} = x_{2}.
\end{equation}

\subsubsection{Associated Non Prolonged Distributions}
Let us start this section by showing that system~\eqref{chainsys:eq} is not static feedback linearizable.
We denote the state coordinates by $(x_{1,1}, x_{1,2}, x_{1,3}, x_{2,1}, x_{2,2}, x_{3}, u_{1}^{(0)}, u_{2}^{(0)})$ ($n=6$ and $m=2$), with $x_{1,j} \triangleq x_{1}^{(j)}$, $j=1,2,3$, $x_{2,j} \triangleq x_{2}^{(j)}$, $j=1,2$ and $u_{i}^{(0)} = u_{i}$, $i=1,2$. 
\\
The two input variables are $u_{i}^{(1)} \triangleq \dot{u}_{i}$, $i=1,2$.

The system vector fields are
\begin{equation}\label{vecf-ex-chainsys:eq}
\begin{aligned}
g_{0}^{(0)} &\triangleq x_{1,2}\frac{\partial}{\partial x_{1,1}} + x_{1,3}\frac{\partial}{\partial x_{1,2}} +
u_{1}^{(0)}\frac{\partial}{\partial x_{1,3}} 
+ x_{2,2}\frac{\partial}{\partial x_{2,1}} + u_{2}^{(0)}\frac{\partial}{\partial x_{2,2}} + u_{1}^{(0)}u_{2}^{(0)}\frac{\partial}{\partial x_{3}}\\
g_{1}^{(0)} &\triangleq \ddu{1}{0},\qquad 
g_{2}^{(0)} \triangleq \ddu{2}{0}
\end{aligned}
\end{equation}
One can verify that 
$$\begin{aligned}
&\ad_{g_{0}^{(0)}} g_{1}^{(0)} = - \frac{\partial}{\partial x_{1,3}} - u_{2}^{(0)}\frac{\partial}{\partial x_{3}}, \quad
\ad_{g_{0}^{(0)}}^{2} g_{1}^{(0)} = \frac{\partial}{\partial x_{1,2}},  \quad
\ad_{g_{0}^{(0)}}^{3} g_{1}^{(0)} = - \frac{\partial}{\partial x_{1,1}}, \quad 
\ad_{g_{0}^{(0)}}^{4} g_{1}^{(0)} = 0
\end{aligned}$$
and
$$\begin{aligned}
&\ad_{g_{0}^{(0)}} g_{2}^{(0)} = - \frac{\partial}{\partial x_{2,2}} - u_{1}^{(0)}\frac{\partial}{\partial x_{3}}, \quad \ad_{g_{0}^{(0)}}^{2} g_{2}^{(0)}  = \frac{\partial}{\partial x_{2,1}}, \quad \ad_{g_{0}^{(0)}}^{3} g_{2}^{(0)}  = 0.
\end{aligned}$$

Therefore
$$G_{0}^{(0)}= \left\{\frac{\partial}{\partial u_{1}^{(0)}}, \frac{\partial}{\partial u_{2}^{(0)}}\right\} = \ol{G_{0}^{(0)}},$$
$$G_{1}^{(0)}=  G_{0}^{(0)} + \left\{ - \frac{\partial}{\partial x_{1,3}} - u_{2}^{(0)}\frac{\partial}{\partial x_{3}}, - \frac{\partial}{\partial x_{2,2}} - u_{1}^{(0)}\frac{\partial}{\partial x_{3}}\right\} \neq \ol{G_{1}^{(0)}},$$
since, \eg, $[g_{2}^{(0)}, \ad_{g_{0}^{(0)}} g_{1}^{(0)} ]=-\frac{\partial}{\partial x_{3}} \not\in G_{1}^{(0)}$, and $\dim G_{1}^{(0)} = 4$ whereas $\dim \ol{G_{1}^{(0)}} = 5$,
$$G_{2}^{(0)}= G_{1}^{(0)} + \left\{\frac{\partial}{\partial x_{1,2}}, \frac{\partial}{\partial x_{2,1}}\right\} \neq \ol{G_{2}^{(0)}}, \quad \dim\ol{G_{2}^{(0)}} = 7,
$$
$$
G_{3}^{(0)}= G_{2}^{(0)} + \left\{\frac{\partial}{\partial x_{1,1}}\right\} \neq \ol{G_{3}^{(0)}}
$$
and $G_{k}^{(0)}= G_{3}^{(0)}$ for all $k \geq 3$. Moreover, $\dim{G_{3}^{(0)}}= 7 < \dim{ \ol{G_{3}^{(0)}}} = n+m = 8$. We conclude that the system is not feedback linearizable.

Since $G_{1}^{(0)}$ is not involutive, there are two possible initializations: 
\begin{equation}\label{init1ex1:eq}
H_{1,1}^{(\0)}= 
\{ g_{1}^{(0)}, \ad_{g_{0}^{(0)}} g_{1}^{(0)} \} = \{ \ddu{1}{0},  - \frac{\partial}{\partial x_{1,3}} - u_{2}^{(0)}\frac{\partial}{\partial x_{3}} \} = \ol{H_{1,1}^{(\0)}}
\end{equation}
or
\begin{equation}\label{init2ex1:eq}
H_{1,2}^{(\0)}= 
\{ g_{2}^{(0)}, \ad_{g_{0}^{(0)}} g_{2}^{(0)} \} = \{ \ddu{2}{0}, - \frac{\partial}{\partial x_{2,2}} - u_{1}^{(0)}\frac{\partial}{\partial x_{3}} \} = \ol{H_{1,2}^{(\0)}}.
\end{equation}
We use the second possibility, which amounts to prolonging the first input.

\subsubsection{Flatness by Pure Prolongation of the First Input}\label{ex1:input1:subsec}

Let us now apply theorem~\ref{cns0:thm} and algorithm~\ref{algo1} with $j_{2}=0$, \ie $g_{2}^{(j_{2})} \triangleq g_{2}^{(0)}= \frac{\partial}{\partial u_{2}^{(0)}}$ to determine if this system is flat by pure prolongation and compute $j_{1}\geq 1$.
Recall that we have set $g_{0}^{(l_{1},0)} = g_{0}^{(0)} + \sum_{p=0}^{l_{1}-1}u_{1}^{(p+1)}\ddu{1}{p}$.

\paragraph{$\bullet~ \mathbf{k=0.}$} We have $\Gamma_{0}^{(l_{1},0)}= \left\{ \frac{\partial}{\partial u_{1}^{(l_{1})}} \right\}$,
$\Delta_{0}^{(l_{1},0)} =  \left\{ \frac{\partial}{\partial u_{2}^{(0)}}\right\} = \ol{\Delta_{0}^{(l_{1},0)}}$ and $\left[ \Gamma_{0}^{(l_{1},0)}, \Delta_{0}^{(l_{1},0)}\right] \subset  \Delta_{0}^{(l_{1},0)}$ for all $l_{1}\geq 1$.

\paragraph{$\bullet~ \mathbf{k=1.}$} If $l_{1}\geq 2$,  
$$\Gamma_{1}^{(l_{1},0)} = \left\{ \frac{\partial}{\partial u_{1}^{(l_{1})}},  \frac{\partial}{\partial u_{1}^{(l_{1}-1)}} \right\}$$ 
and
$$\Delta_{1}^{(l_{1},0)} = \left\{ \ddu{2}{0}, \ad_{g_{0}^{(l_{1},0)}}\ddu{2}{0}\right\} = \left\{ \frac{\partial}{\partial u_{2}^{(0)}}, \frac{\partial}{\partial x_{2,2}} + u_{1}^{(0)}\frac{\partial}{\partial x_{3}}\right\} = \ol{\Delta_{1}^{(l_{1},0)}}.$$ 
Moreover,  $\left[ \Gamma_{1}^{(l_{1},0)}, \Delta_{1}^{(l_{1},0)}\right] = \{0\} \subset  \Delta_{1}^{(l_{1},0)}$.

Now, for $l_{1}=1$,
we have $\Gamma_{1}^{(1,0)}= \left\{ \frac{\partial}{\partial u_{1}^{(1)}} \right\}$ and
$\Delta_{1}^{(1,0)} = \left\{ \ddu{1}{0}, \ddu{2}{0}, \ad_{g_{0}^{(1,0)}}\ddu{2}{0} \right\} = \left\{ \frac{\partial}{\partial u_{1}^{(0)}},  \frac{\partial}{\partial u_{2}^{(0)}}, \frac{\partial}{\partial x_{2,2}} + u_{1}^{(0)}\frac{\partial}{\partial x_{3}} \right\}$ which is not involutive. 

Thus  $\sigma_{1,\Delta}(1)= 2$ and $\sigma_{1,\Gamma,\Delta}(1)=0$ which implies that 
$$j_{1} = \max_{k\geq 0} \sigma_{1,\Delta}(k) \wedge \sigma_{1,\Gamma,\Delta}(k) \geq \sigma_{1,\Delta}(1) \wedge \sigma_{1,\Gamma,\Delta}(1) = 2.$$

\paragraph{$\bullet~ \mathbf{k=2.}$} If  $l_{1}\geq 3$, we have 
$$\Gamma_{2}^{(l_{1},0)}= \left\{ \ddu{1}{l_{1}}, \ddu{1}{l_{1}-1} , \ddu{1}{l_{1}-2}\right\}
$$ 
and 
$$\begin{aligned}
\Delta_{2}^{(l_{1},0)} &= \left\{ \ddu{2}{0}, \ad_{g_{0}^{(l_{1},0)}}\ddu{2}{0},  \ad_{g_{0}^{(l_{1},0)}}^{2}\ddu{2}{0}\right\} \\
&= \left\{ \frac{\partial}{\partial u_{2}^{(0)}}, \frac{\partial}{\partial x_{2,2}} + u_{1}^{(0)}\frac{\partial}{\partial x_{3}}, \frac{\partial}{\partial x_{2,1}} - u_{1}^{(1)}\frac{\partial}{\partial x_{3}}\right\} = \ol{\Delta_{2}^{(l_{1},0)}}.
\end{aligned}$$

Moreover, it is readily verified that $\left[ \Gamma_{2}^{(l_{1},0)}, \Delta_{2}^{(l_{1},0)}\right] \subset  \Delta_{2}^{(l_{1},0)}$ only if $l_{1}\geq 4$, condition (ii) of theorem~\ref{cns0:thm} being violated if $l_{1} = 3$
and we have  $\sigma_{1,\Delta}(2)= 0$ and $\sigma_{1,\Gamma,\Delta}(2)=4$ which implies that $j_{1} = \max_{k\geq 0} \sigma_{1,\Delta}(k) \wedge \sigma_{1,\Gamma,\Delta}(k) \geq \max_{r=1,2}\sigma_{1,\Delta}(r) \wedge \sigma_{1,\Gamma,\Delta}(r)  \geq 4$.

\paragraph{$\bullet~ \mathbf{k=3.}$} Again, if  $l_{1}\geq 4$, we have:
$$\Gamma_{3}^{(l_{1},0)}= \left\{ \ddu{1}{l_{1}}, \ddu{1}{l_{1}-1} , \ddu{1}{l_{1}-2)}, \ddu{1}{l_{1}-3}\right\}$$ 
and 
$$
\begin{aligned}
\Delta_{3}^{(l_{1},0)} &= \left\{ \ddu{2}{0}, \ad_{g_{0}^{(l_{1},0)}}\ddu{2}{0},  \ad_{g_{0}^{(l_{1},0)}}^{2}\ddu{2}{0},  \ad_{g_{0}^{(l_{1},0)}}^{3}\ddu{2}{0} \right\} \\
&= \left\{ \frac{\partial}{\partial u_{2}^{(0)}}, \frac{\partial}{\partial x_{2,2}} + u_{1}^{(0)}\frac{\partial}{\partial x_{3}}, \frac{\partial}{\partial x_{2,1}} - u_{1}^{(1)}\frac{\partial}{\partial x_{3}}, u_{1}^{(2)}\frac{\partial}{\partial x_{3}}\right\} \\
&= \left\{ \frac{\partial}{\partial u_{2}^{(0)}}, \frac{\partial}{\partial x_{2,2}}, \frac{\partial}{\partial x_{2,1}}, \frac{\partial}{\partial x_{3}}\right\} = \ol{\Delta_{3}^{(l_{1},0)}}
\end{aligned}
$$
provided that $u_{1}^{(2)} \neq 0$.
We also indeed have $\left[ \Gamma_{3}^{(l_{1},0)}, \Delta_{3}^{(l_{1},0)}\right] \subset  \Delta_{3}^{(l_{1},0)}$ for $l_{1}\geq 4$ hence $\sigma_{1,\Gamma,\Delta}(3) = 4$ and $j_{1} \geq \max_{r=1,2,3}\sigma_{1,\Delta}(r) \wedge \sigma_{1,\Gamma,\Delta}(r)  \geq 4$.

\paragraph{$\bullet~ \mathbf{k = 4}$} If $l_{1}\geq 5$, we have $\Gamma_{4}^{(l_{1},0)} = \left\{ \frac{\partial}{\partial u_{1}^{(l_{1})}},  \ldots, \frac{\partial}{\partial u_{1}^{(l_{1}-4)}} \right\}$ and $\Delta_{4}^{(l_{1},0)}= \Delta_{3}^{(l_{1},0)}$ since $\ad_{g_{0}^{(l_{1},0)}}^{4}\ddu{2}{0}= -u_{1}^{(3)}\frac{\partial}{\partial x_{3}} \in \Delta_{3}^{(l_{1},0)}$. 

If now $l_{1}=4$, $\Gamma_{4}^{(4,0)} = \left\{ \frac{\partial}{\partial u_{1}^{(4)}},  \ldots, \frac{\partial}{\partial u_{1}^{(1)}} \right\}$
and
$$\begin{aligned}
\Delta_{4}^{(4,0)} &=  \left\{ \ddu{1}{0}, \ddu{2}{0}, \ad_{g_{0}^{(4,0)}}\ddu{2}{0}, \ldots,  \ad_{g_{0}^{(4,0)}}^{4}\ddu{2}{0} \right\} \\
&= \left\{ \frac{\partial}{\partial u_{1}^{(0)}}, \frac{\partial}{\partial u_{2}^{(0)}}, \frac{\partial}{\partial x_{2,2}}, \frac{\partial}{\partial x_{2,1}}, \frac{\partial}{\partial x_{3}} \right\} = \ol{\Delta_{4}^{(4,0)}}.
\end{aligned}
$$ 
We thus immediately get
$\left[ \Gamma_{4}^{(l_{1},0)}, \Delta_{4}^{(l_{1},0)}\right] \subset \Delta_{4}^{(l_{1},0)}$ for all $l_{1}\geq 4$.

\paragraph{$\bullet~ \mathbf{k\geq 5}$}
Finally, the reader may easily check that
$\Gamma_{k}^{(4,0)} = \Gamma_{4}^{(4,0)}$ for all $k\geq 5$ and 
$$\begin{aligned}
\Delta_{5}^{(4,0)} &=  \Delta_{4}^{(4,0)} + \left\{  \ad_{g_{0}^{(4,0)}}\ddu{1}{0} \right\} = \left\{ \frac{\partial}{\partial u_{1}^{(0)}}, \frac{\partial}{\partial x_{1,3}}, \frac{\partial}{\partial u_{2}^{(0)}}, \frac{\partial}{\partial x_{2,2}}, \frac{\partial}{\partial x_{2,1}}, \frac{\partial}{\partial x_{3}} \right\} \\
&= \ol{\Delta_{5}^{(4,0)}}\\
\Delta_{6}^{(4,0)} &= \Delta_{5}^{(4,0)} + \left\{  \ad_{g_{0}^{(4,0)}}^{2}\ddu{1}{0} \right\} =
\left\{ \frac{\partial}{\partial u_{1}^{(0)}}, \frac{\partial}{\partial x_{1,3}}, \frac{\partial}{\partial x_{1,2}}, \frac{\partial}{\partial u_{2}^{(0)}}, \frac{\partial}{\partial x_{2,2}}, \frac{\partial}{\partial x_{2,1}}, \frac{\partial}{\partial x_{3}} \right\} \\
&= \ol{\Delta_{6}^{(4,0)}}\\
\Delta_{7}^{(4,0)} &= \Delta_{6}^{(4,0)} + \left\{  \ad_{g_{0}^{(4,0)}}^{3}\ddu{1}{0} \right\} = \T \R^{8}.
\end{aligned}$$
We also indeed have
$\left[ \Gamma_{k}^{(4,0))}, \Delta_{k}^{(4,0))}\right] \subset  \Delta_{k}^{(4,0)}$ for all $k\geq 5$.

Hence, for all $k\geq 0$, the minimal $\mbf{j}$ is equal to $(4,0)$ and we conclude that system \eqref{chainsys:eq}, with the first input channel prolonged up to $j_{1}=4$, \ie controlled by $u_{1}^{(5)}$, is feedback linearizable. 

Note that the bounds \eqref{ineq:kstar-j} and \eqref{ineq-j:eq} are indeed satisfied. They read $ n+ j_{1}= 10 \geq k_{\star}^{(4,0)} = 7 \geq \frac{n + j_{1}}{2} \wedge j_{1}= 5$, but they are not tight. 

Accordingly, evaluating the bound on the number of integrators needed to linearize the system, proposed by \cite{SlT-scl96} for  $m=2$, we find $2n-3= 9$, and the one proposed by \cite{FrFo-ejc2005} gives $2n - \frac{1}{6}(8+24-14)= 2n-3=9$ as well.

\subsubsection{Flat output computation}
The prolonged system is now expressed in the state coordinates
$$\ol{x}^{(4,0)} \triangleq (x_{1,1}, x_{1,2}, x_{1,3}, x_{2,1}, x_{2,2}, x_{3}, u_{1}^{(0)}, u_{1}^{(1)}, u_{1}^{(2)}, u_{1}^{(3)}, u_{1}^{(4)},u_{2}^{(0)})$$ 
still with $x_{1,j} \triangleq x_{1}^{(j-1)}$, $j=1,2,3$, $x_{2,j} \triangleq x_{2}^{(j-1)}$, $j=1,2$ and $u_{i}^{(0)} = u_{i}$, $i=1,2$. We indeed still have $n=6$ but the prolonged state dimension is now equal to $n+m+\vert j\vert =12$ with the two input variables $u_{1}^{(5)}$ and $u_{2}^{(1)}$.

The prolonged system vector fields are
$$
g_{0}^{(4,0)} \triangleq g_{0}^{(0)} + \sum_{j=0}^{3} u_{1}^{(j+1)}\ddu{1}{j}, \qquad
g_{1}^{(4)} \triangleq \ddu{1}{4},\qquad
g_{2}^{(0)} \triangleq \ddu{2}{0}
$$
and the corresponding distributions are, according to \eqref{G-k-j:eq},
%
%
%
%
$$\begin{aligned}
G_{0}^{(4,0)} &= \left\{\ddu{1}{4}, \ddu{2}{0}\right\} =  \ol{G_{0}^{(4,0)}}, \\
G_{1}^{(4,0)} &= G_{0}^{(4,0)} +  \left\{\ddu{1}{3}, - \frac{\partial}{\partial x_{2,2}} - u_{1}^{(0)}\frac{\partial}{\partial x_{3}}\right\} =  \ol{G_{1}^{(4,0)}},\\
G_{2}^{(4,0)} &= G_{1}^{(4,0)} + \left\{\ddu{1}{2},  \frac{\partial}{\partial x_{2,1}} - u_{1}^{(1)}\frac{\partial}{\partial x_{3}}\right\} =  \ol{G_{2}^{(4,0)}},\\
G_{3}^{(4,0)} &= G_{2}^{(4,0)} + \left\{\ddu{1}{1}, - u_{1}^{(2)}\frac{\partial}{\partial x_{3}}\right\} = \ol{G_{3}^{(4,0)}},\\
G_{4}^{(4,0)} &= G_{3}^{(4,0)} + \left\{\ddu{1}{0}\right\} = \ol{G_{4}^{(4,0)}}, \\
G_{5}^{(4,0)} &= G_{4}^{(4,0)} + \left\{ \frac{\partial}{\partial x_{1,3}} \right\} = \ol{G_{5}^{(4,0)}},\\
G_{6}^{(4,0)} &= G_{5}^{(4,0)} + \left\{ \frac{\partial}{\partial x_{1,2}}\right\} = \ol{G_{6}^{(4,0)}}, \\
G_{7}^{(4,0)} &= G_{6}^{(4,0)} + \left\{ \frac{\partial}{\partial x_{1,1}}\right\} = \ol{G_{7}^{(4,0)}} = \T\R^{12}.
\end{aligned}$$
This confirms that the system \eqref{chainsys:eq} is flat by pure prolongation in any neighborhood excluding $u_{1}^{(2)}=0$, with $k^{(4,0)}_{\ast}= 7$. The reader may easily verify that $\rho_{k}^{(4,0)}= 2$ for $k= 0, \ldots, 3$ and $\rho_{k}^{(4,0)}= 1$ for $k= 4, \ldots, 7$, which yields $\kappa_{1}^{(4,0)}=8$ and $\kappa_{2}^{(4,0)}=4$.

The corresponding flat outputs are thus obtained by solving the set of P.D.E.'s
\begin{equation}\label{Gk4y:eq}
\begin{aligned}
&\left < G_{k}^{(4,0)}, dy_{1} \right > =0, \quad k= 0,\ldots, 6,  \qquad \left < G_{7}^{(4,0)}, dy_{1} \right > \neq 0\\
&\left < G_{k}^{(4,0)}, dy_{2} \right > =0, \quad k= 0,\ldots, 2, \qquad \left < G_{3}^{(4,0)}, dy_{2} \right > \neq 0
\end{aligned}
\end{equation}
whose solution is
\begin{equation}\label{ex1sol:eq}
y_{1}= x_{1} , \quad y_{2} =  x_{3} - x_{2,2}u_{1}^{(0)} + x_{2,1}u_{1}^{(1)} =  x_{3} - \dot{x}_{2}u_{1} + x_{2}\dot{u}_{1}.
\end{equation}

\begin{rem}
Using the first intialization \eqref{init1ex1:eq}, which amounts to prolonging the second input, we leave to the reader the verification that one obtains a prolongation of order 6, thus non minimal, with flat outputs given by \eqref{chain-f-out:eq}.
\end{rem}

\begin{rem}
In \cite[section 3.1.1]{MMR-ecc}, the authors consider a dual notion of minimality, called $r$-flatness, where $r$ is the minimal number over all possible flat outputs of the maximal number of derivatives of the inputs that appear in the flat outputs, \ie 
$$r= \min_{Y : ~\mathrm{flat~output}}~\max_{i=1,\ldots,m}\{s_{i} \mid \mbf{s}=(s_{1}, \ldots, s_{m}),~ y=Y(x,\ol{u}^{(\mbf{s})})\}.$$ They conjectured that $r$ should be equal to 1 in the present case (with their notations, $\alpha_{1}=3$, $\alpha_{2}=2$ and $\min(\alpha_{1},\alpha_{2})-1=1$). As the reader may easily verify, it is indeed minimal since the minimal $\mbf{j}$ is $(4,0)$ and moreover since, by the equations of the first line of \eqref{Gk4y:eq}, $y_{1}$ neither can depend on $\ol{u}_{1}^{(4)}$ nor on $u_{2}^{(0)}$, and, by the second line, $y_{2}$ cannot depend on $u_{1}^{(4)}, u_{1}^{(3)}, u_{1}^{(2)}$ but explicitly depends on $u_{1}^{(1)}$ by the definition of $G_{2}^{(4,0)}$.
\end{rem}

\subsection{4-dimensional Driftless Bilinear System \cite{MMR-ecc,MM-scl95,MM-cdc95}}\label{driftlessex:subsec}

\begin{equation}\label{driftless-sys:eq}
\begin{aligned}
\dot{x}_{1} &= u_{1}\\
\dot{x}_{2} &= x_{3}u_{1}\\
\dot{x}_{3} &= x_{4}u_{1}\\
\dot{x}_{4} &= u_{2}
\end{aligned}
\end{equation}
It is immediate to verify that this system is flat with flat output 
\begin{equation}\label{driftless-f-out:eq}
y_{1}= x_{1}, \quad y_{2} = x_{2}
\end{equation}
(see \cite{MM-scl95,MM-cdc95} and \cite[theorems 4 and 5]{MMR-ecc}) but not static feedback linearizable.

According to our formalism, we consider the state $(x_{1}, x_{2}, x_{3}, x_{4}, u_{1}^{(0)}, u_{2}^{(0)})$ of dimension 6, with $n=4$ and $m=2$, and the new inputs $(u_{1}^{(1)}, u_{2}^{(1)})$. The associated vector fields are
$$ g_{0}^{(0)}= u_{1}^{(0)} \left( \frac{\partial}{\partial x_{1}} + x_{3}\frac{\partial}{\partial x_{2}} + x_{4}\frac{\partial}{\partial x_{3}} \right) + u_{2}^{(0)}\frac{\partial}{\partial x_{4}} , \quad g_{1}^{(0)}= \ddu{1}{0}, \quad g_{2}^{(0)}= \ddu{2}{0}.$$
The reader may easily verify that $G_{1}^{(\0)}$ is not involutive and that there are two possible initializations:
$$H_{1,1}^{(\0)}= \{ g_{1}^{(0)}, \ad_{g_{0}^{(\0)}}g_{1}^{(0)} \} = \{ \ddu{1}{0},  \frac{\partial}{\partial x_{1}} + x_{3}\frac{\partial}{\partial x_{2}} + x_{4}\frac{\partial}{\partial x_{3}} \} = \ol{H_{1,1}^{(\0)}}$$
or 
$$H_{1,2}^{(\0)}= \{ g_{2}^{(0)}, \ad_{g_{0}^{(\0)}}g_{2}^{(0)} \} = \{ \ddu{2}{0},  \frac{\partial}{\partial x_{4}} \} = \ol{H_{1,2}^{(\0)}}.$$
We use the second one, which amounts to prolonging the first input $u_{1}^{(1)}$.

\subsubsection{the distributions $\Gamma_{k}^{(\mbf{l})}$ and $\Delta_{k}^{(\mbf{l})}$}
We set  $\mbf{l}= (l_{1},0)$, $l_{1}\geq 1$.
We thus have
$$g_{0}^{(l_{1},0)}= g_{0}^{(0)} + \sum_{p= 0}^{l_{1}-1}u_{1}^{(p+1)}\ddu{1}{p}, \quad 
g_{1}^{(l_{1})}=\frac{\partial}{\partial u_{1}^{(l_{1})}}, \quad \quad g_{2}^{(0)}= \ddu{2}{0}$$
and
$$\Gamma_{0}^{(l_{1},0)}= \left\{ \ddu{1}{l_{1}}\right\}, \quad \Delta_{0}^{(l_{1},0)} = \left\{ \ddu{2}{0} \right\}$$
for all $l_{1}\geq 1$.

\paragraph{$\bullet~ \mathbf{k=1.}$} If $l_{1}\geq 2$, we have 
$$\Gamma_{1}^{(l_{1},0)}= \left\{ \ddu{1}{l_{1}}, \ddu{1}{l_{1}-1}\right\}, \quad \Delta_{1}^{(l_{1},0)} = 
\Delta_{0}^{(l_{1},0)} + \{  \ad_{g_{0}^{(l_{1},0)}} \ddu{2}{0} \}
=\left\{ \ddu{2}{0}, \frac{\partial}{\partial x_{4}} \right\}$$
and if $l_{1} = 1$, 
$$\Gamma_{1}^{(1,0)} = \left\{ \ddu{1}{1} \right\}, \quad \Delta_{1}^{(1,0)} =  
\Delta_{0}^{(1,0)} + \{  \ddu{1}{0}, \ad_{g_{0}^{(1,0)}} \ddu{2}{0} \} =
\left\{ \ddu{1}{0}, \ddu{2}{0}, \frac{\partial}{\partial x_{4}} \right\}.$$
We thus have $\Delta_{1}^{(l_{1},0)} = \ol{\Delta_{1}^{(l_{1},0)}}$ and $\left[\Gamma_{1}^{(l_{1},0)}, \Delta_{1}^{(l_{1},0)}\right] = \{0\} \subset \Delta_{1}^{(l_{1},0)}$ for all $l_{1}\geq 1$, hence $\sigma_{1,\Gamma,\Delta}(1)= \sigma_{1,\Delta}(1) = 0$.

\paragraph{$\bullet~ \mathbf{k=2.}$} If $l_{1}\geq 3$, we have 
$$\begin{array}{c}
\Gamma_{2}^{(l_{1},0)}= \left\{ \ddu{1}{l_{1}}, \ddu{1}{l_{1}-1}, \ddu{1}{l_{1}-2}\right\}, \quad \Delta_{2}^{(l_{1},0)} = \left\{ \ddu{2}{0}, \frac{\partial}{\partial x_{4}}, \frac{\partial}{\partial x_{3}} \right\},\vspace{10pt} \\ 
\left[\Gamma_{2}^{(l_{1},0)}, \Delta_{2}^{(l_{1},0)} \right] = \{0\} 
\end{array}$$
and if $l_{1} = 2$, 
$$\begin{array}{c}
\Gamma_{2}^{(2,0)} = \left\{ \ddu{1}{2}, \ddu{1}{1} \right\}, \quad \Delta_{2}^{(2,0)} = \left\{ \ddu{1}{0}, \ddu{2}{0}, \frac{\partial}{\partial x_{4}}, \frac{\partial}{\partial x_{3}} \right\}, \vspace{10pt} \\ 
\left[\Gamma_{2}^{(2,0)}, \Delta_{2}^{(2,0)} \right] = \{0\}.\end{array}$$
Finally, if $l_{1} = 1$,
$$\begin{aligned}
\Gamma_{2}^{(1,0)} = \left\{ \ddu{1}{1} \right\}, \quad \Delta_{2}^{(1,0)} &= \left\{ \ddu{1}{0},  - \frac{\partial}{\partial x_{1}} - x_{3}\frac{\partial}{\partial x_{2}} - x_{4}\frac{\partial}{\partial x_{3}},
\ddu{2}{0}, \frac{\partial}{\partial x_{4}}, \frac{\partial}{\partial x_{3}} \right\} \\
&\neq \ol{ \Delta_{2}^{(1,0)}}.
\end{aligned}$$

We thus have $\Delta_{2}^{(l_{1},0)} = \ol{\Delta_{2}^{(l_{1},0)}}$ and $\left[\Gamma_{2}^{(l_{1},0)}, \Delta_{2}^{(l_{1},0)}\right] \subset  \Delta_{2}^{(l_{1},0)}$ for all $l_{1}\geq 2$ but, for $l_{1}=1$, $\Delta_{2}^{(1,0)}$ is not involutive. Therefore, $\sigma_{1,\Delta}(2) = 2$ and our search may be restricted to $l_{1}\geq 2$.

\paragraph{$\bullet~ \mathbf{k=3.}$} If $l_{1}\geq 4$, we have 
$$\Gamma_{3}^{(l_{1},0)}= \left\{ \ddu{1}{l_{1}}, \ddu{1}{l_{1}-1}, \ddu{1}{l_{1}-2}, \ddu{1}{l_{1}-3}\right\}, \quad \Delta_{3}^{(l_{1},0)} = \left\{ \ddu{2}{0}, \frac{\partial}{\partial x_{4}}, \frac{\partial}{\partial x_{3}}, \frac{\partial}{\partial x_{2}}  \right\}.$$

If $l_{1} = 3$, 
$$\Gamma_{3}^{(3,0)}= \left\{ \ddu{1}{3}, \ddu{1}{2}, \ddu{1}{1} \right\}, \quad \Delta_{3}^{(3,0)} = \left\{ \ddu{1}{0}, \ddu{2}{0}, \frac{\partial}{\partial x_{4}}, \frac{\partial}{\partial x_{3}}, \frac{\partial}{\partial x_{2}}  \right\}.$$

If $l_{1} = 2$, 
$$\Gamma_{3}^{(2,0)}= \left\{ \ddu{1}{2}, \ddu{1}{1} \right\}, \quad \Delta_{3}^{(2,0)} = \left\{ \ddu{1}{0}, \ddu{2}{0}, \frac{\partial}{\partial x_{4}}, \frac{\partial}{\partial x_{3}}, \frac{\partial}{\partial x_{2}},  \frac{\partial}{\partial x_{1}}  \right\}= \T\R^{6}$$

Therefore, $\Delta_{k}^{(2,0)}  = \ol{\Delta_{k}^{(2,0)}}$ and $\left[\Gamma_{k}^{(2,0)},\Delta_{k}^{(2,0)} \right] \subset \Delta_{k}^{(2,0)} $ for all $k\geq 0$ and $\Delta_{3}^{(2,0)} = \T\R^{6}$.

We conclude that the conditions of theorem~\ref{cns0:thm} hold true whenever $j_{1}\geq 2$, which proves that system~\eqref{driftless-sys:eq} with the pure prolongation of order $\mbf{j}=(2,0)$ is feedback linearizable.

The reader may easily check that, using the first initialization $H_{1,1}^{(\0)}$, since $\Delta_{2}^{(0, l_{2})}$ is not involutive for all $l_{2} \geq 3$, no pure prolongation of the second input channel, $u_{2}^{(1)}$, can lead to the linearizability conditions. Therefore the minimal prolongation is $\mbf{j}=(2,0)$.

\subsubsection{Flat output computation}

The prolonged state is now $\ol{x}^{(2,0)} \triangleq (x_{1}, x_{2}, x_{3}, x_{4}, u_{1}^{(0)}, u_{1}^{(1)}, u_{1}^{(2)}, u_{2}^{(0)})$ of dimension 8, and the new inputs are $(u_{1}^{(3)}, u_{2}^{(1)})$. 
$$
G_{0}^{(2,0)} = \left\{\ddu{1}{2}, \ddu{2}{0}\right\} = \ol{G_{0}^{(2,0)}}, 
$$
$$
G_{1}^{(2,0)} = \left\{\ddu{1}{2}, \ddu{1}{1}, \ddu{2}{0}, \frac{\partial}{\partial x_{4}}\right\} = \ol{G_{1}^{(2,0)}},
$$
$$
G_{2}^{(2,0)} = \left\{\ddu{1}{2}, \ddu{1}{1}, \ddu{1}{0}, \ddu{2}{0},  \frac{\partial}{\partial x_{4}}, \frac{\partial}{\partial x_{3}}\right\} =  \ol{G_{2}^{(2,0)}},
$$
$$\begin{aligned}
G_{3}^{(2,0)} &= \left\{\ddu{1}{2}, \ddu{1}{1}, \ddu{1}{0}, \frac{\partial}{\partial x_{1}} + x_{3}\frac{\partial}{\partial x_{2}} + x_{4}\frac{\partial}{\partial x_{3}}, \ddu{2}{0}, \frac{\partial}{\partial x_{4}}, \frac{\partial}{\partial x_{3}}, \frac{\partial}{\partial x_{2}}\right\}\\ 
&= \ol{G_{3}^{(2,0)}}  = \T\R^{8}
\end{aligned}
$$
hence the feedback linearizability of the purely prolonged system with 
$\rho_{0}^{(2,0)} =\rho_{1}^{(2,0)}= \rho_{2}^{(2,0)}=\rho_{3}^{(2,0)}=2$ and $\kappa_{1}^{(2,0)}=\kappa_{2}^{(2,0)}=4$.

Flat outputs $(y_{1}, y_{2})$ are locally non trivial solutions of the system~\eqref{PDEs2-0:eq}, \ie:
\begin{equation}\label{PDEs2-0:eq}
\left< G_{k}^{(2,0)}, dy_{i} \right> = 0, \; k= 0,1, 2, \quad \mathrm{with~}                           
\quad \left< G_{3}^{(2,0)}, dy_{i} \right> \neq 0, \quad i=1,2.
\end{equation}
It is immediate to verify that 
$$y_{1} = x_{1}, \qquad y_{2} = x_{2}$$ 
is a solution of \eqref{PDEs2-0:eq} and that the mapping
$$\ol{x}^{(2,0)} \mapsto \left(y_{1}, \ldots, y_{1}^{(3)}, y_{2}, \ldots, y_{2}^{(3)}\right)$$
is a local diffeomorphism.

\subsection{An Example from \cite{CLM}}\label{5:3:ex}
In our formalism, considering the inputs $(u_{1}, u_{2}) \triangleq (u_{1}^{(0)}, u_{2}^{(0)})$ as part of the state, with $n=4$ and $m=2$, this example from \cite[Example 2]{CLM} reads: 
\begin{equation}\label{clm-ex:eq}
\begin{aligned}
&\dot{x}_{1} = x_{2} + x_{3}u_{2}^{(0)}\\
&\dot{x}_{2} = x_{3} + x_{1}u_{2}^{(0)}\\
&\dot{x}_{3} = u_{1}^{(0)} + x_{2}u_{2}^{(0)}\\
&\dot{x}_{4} = u_{2}^{(0)}\\
&\dot{u}_{1}^{(0)} = u_{1}^{(1)}\\
&\dot{u}_{2}^{(0)} = u_{2}^{(1)}.
\end{aligned}
\end{equation}

It is shown in \cite{CLM} that this sytem does not satisfy the sufficient, but otherwise not necessary, condition for dynamic linearization of Theorem 4.2 of this paper. Nevertheless, it satisfies the algorithm of \cite[section 3.1]{BC-scl2004}, that constitutes a sufficient condition for flatness by pure prolongation, without proof of minimality of the obtained prolongation. We show here that it is linearizable by pure prolongation by application of our algorithm, thus providing the minimal prolongation.

The non prolonged vctor fields are 
$$
\begin{aligned}
&g_{0}^{(\0)} = (x_{2} + x_{3}u_{2}^{(0)})\frac{\partial}{\partial x_{1}} 
+  (x_{3} + x_{1}u_{2}^{(0)})\frac{\partial}{\partial x_{2}}
+ (u_{1}^{(0)} + x_{2}u_{2}^{(0)})\frac{\partial}{\partial x_{3}}
+ u_{2}^{(0)}\frac{\partial}{\partial x_{4}} \\
&
g_{1}^{(\0)} = \ddu{1}{0}, \qquad g_{2}^{(0)} = \ddu{2}{0}.
\end{aligned}
$$
and it is easily seen that 
$$G_{1}^{(\0)} = \{  \ddu{1}{0}, \ddu{2}{0}, \ddxx{3},  - x_{3} \ddxx{1} - x_{1}\ddxx{2} - x_{2}\ddxx{3} - \ddxx{4} \} \neq \ol{G_{1}^{(\0)}}$$
and that the subdistributions
$$H_{1,1}^{0} = \{ g_{1}^{(\0)}, \ad_{g_{0}^{(\0)}}g_{1}^{(\0)} \} = \{ \ddu{1}{0}, \ddxx{3} \} = \ol{H_{1,1}^{0}}$$
or 
$$H_{1,2}^{0} = \{ g_{2}^{(\0)}, \ad_{g_{0}^{(\0)}}g_{2}^{(\0)} \} = \{ \ddu{2}{0}, - x_{3} \ddxx{1} - x_{1}\ddxx{2} - x_{2}\ddxx{3} - \ddxx{4} \} = \ol{H_{1,2}^{0}}$$
can be taken as possible initializations.

Choosing $H_{1,1}^{0}$ amounts to prolonging the second input at an arbitrary order $l_{2}\geq 1$ and set $\bl= (0, l_{2})$. For $l_{2}\geq 1$, we denote, as before,
$$
\begin{aligned}
&g_{0}^{(0,l_{2})} = (x_{2} + x_{3}u_{2}^{(0)})\frac{\partial}{\partial x_{1}} 
+  (x_{3} + x_{1}u_{2}^{(0)})\frac{\partial}{\partial x_{2}}
+ (u_{1}^{(0)} + x_{2}u_{2}^{(0)})\frac{\partial}{\partial x_{3}}\\
&\hspace{6cm} 
+ u_{2}^{(0)}\frac{\partial}{\partial x_{4}} + \sum_{p= 0}^{l_{2}-1} u_{2}^{(p+1)}\ddu{2}{p}\\
&
g_{1}^{(\0)} = \ddu{1}{0}, \qquad g_{2}^{(l_{2})} = \ddu{2}{l_{2}}
\end{aligned}
$$
and we indeed have
$$\Gamma_{0}^{(0,l_{2})}=\left\{ \ddu{2}{l_{2}}\right\}, \quad \Delta_{0}^{(0,l_{2})} = \left\{ \ddu{1}{0}\right\} = \ol{\Delta_{0}^{(0,l_{2})}}, \quad \left[ \Gamma_{0}^{(0,l_{2})}, \Delta_{0}^{(0,l_{2})}\right] \subset \Delta_{0}^{(0,l_{2})}, \quad \forall l_{2}\geq 1.$$

\paragraph{$\bullet~ \mathbf{k=1.}$} For all $l_{2}\geq 2$, $\ds \ad_{g_{0}^{(0,l_{2})}}\ddu{1}{0} = - \frac{\partial}{\partial x_{3}}$ and
$$\begin{array}{c}
\ds \Gamma_{1}^{(0,l_{2})}=\left\{ \ddu{2}{l_{2}}, \ddu{2}{l_{2}-1}\right\}, \qquad \Delta_{1}^{(0,l_{2})} = \left\{ \ddu{1}{(0)},
-\frac{\partial}{\partial x_{3}}\right\} = \ol{\Delta_{1}^{(0,l_{2})}}, \vspace{1em}\\
\ds \left[ \Gamma_{1}^{(0,l_{2})}, \Delta_{1}^{(0,l_{2})}\right] \subset \Delta_{1}^{(0,l_{2})}.
\end{array}$$
For $l_{2}=1$: 
$$\begin{array}{c}
\ds \Gamma_{1}^{(0,1)}=\left\{ \ddu{2}{1} \right\}, \quad \Delta_{1}^{(0,1)} = \left\{ \ddu{2}{0}, \ddu{1}{0},
-\frac{\partial}{\partial x_{3}}\right\} = \ol{\Delta_{1}^{(0,1)}}, \vspace{1em}\\ 
\ds \left[ \Gamma_{1}^{(0,1)}, \Delta_{1}^{(0,1)}\right] \subset \Delta_{1}^{(0,1)}.
\end{array}$$

\paragraph{$\bullet~ \mathbf{k=2.}$} For all $l_{2}\geq 3$, $\ds \ad_{g_{0}^{(0,l_{2})}}^{2}\ddu{1}{0} = u_{2}^{(0)} \frac{\partial}{\partial x_{1}} +  \frac{\partial}{\partial x_{2}}$ and 
$$\begin{array}{c}
\ds \Gamma_{2}^{(0,l_{2})}=\left\{ \ddu{2}{l_{2}}, \ddu{2}{l_{2}-1}, \ddu{2}{l_{2}-2}\right\}, \vspace{1em}\\
\Delta_{2}^{(0,l_{2})} = \left\{ \ddu{1}{0},-\frac{\partial}{\partial x_{3}}, u_{2}^{(0)} \frac{\partial}{\partial x_{1}} +  \frac{\partial}{\partial x_{2}}\right\} = \ol{\Delta_{2}^{(0,l_{2})}}, \vspace{1em}\\
\ds  \left[ \Gamma_{2}^{(0,l_{2})}, \Delta_{2}^{(\0,l_{2})}\right] \subset \Delta_{2}^{(0,l_{2})}.
\end{array}$$
But for $l_{2}=2$: 
$$\Gamma_{2}^{(0,2)}=\left\{ \ddu{2}{2} , \ddu{2}{1}\right\}, \quad \Delta_{2}^{(0,2)} = \left\{ \ddu{2}{0}, \ddu{1}{0},
-\frac{\partial}{\partial x_{3}}, u_{2}^{(0)} \frac{\partial}{\partial x_{1}} +  \frac{\partial}{\partial x_{2}}\right\} \neq \ol{\Delta_{2}^{(0,2)}}$$
therefore, $\sigma_{2,\Delta}(2)=2$ and we must exclude $j_{2}=2$.

\paragraph{$\bullet~ \mathbf{k=3.}$} For all $l_{2} \geq 4$,  $\ds \ad_{g_{0}^{(0,l_{2})}}^{3} \ddu{1}{0} = (u_{2}^{(1)} - 1)\frac{\partial}{\partial x_{1}} - \left(u_{2}^{(0)}\right)^{2}\frac{\partial}{\partial x_{2}} - u_{2}^{(0)} \frac{\partial}{\partial x_{3}}$ and, if we exclude the points where $u_{2}^{(0)}=0$ and $u_{2}^{(1)} =1$,
$$\Gamma_{3}^{(0,l_{2})}=\left\{ \ddu{2}{l_{2}}, \ddu{2}{l_{2}-1}, \ddu{2}{l_{2}-2}, \ddu{2}{l_{2}-3}\right\},$$
$$\begin{aligned}
\Delta_{3}^{(\0,l_{2})} &= \left\{ \ddu{1}{0},-\frac{\partial}{\partial x_{3}}, u_{2}^{(0)} \frac{\partial}{\partial x_{1}} +  \frac{\partial}{\partial x_{2}}, (u_{2}^{(1)} - 1)\frac{\partial}{\partial x_{1}} + \left(u_{2}^{(0)}\right)^{2}\frac{\partial}{\partial x_{2}} - u_{2}^{(0)}\frac{\partial}{\partial x_{3}}\right\}\\
&= \left\{ \ddu{1}{(0)}, \frac{\partial}{\partial x_{3}}, \frac{\partial}{\partial x_{2}},  \frac{\partial}{\partial x_{1}} \right\} = \ol{\Delta_{3}^{(0,l_{2})}},
\end{aligned}$$
$$
\left[ \Gamma_{3}^{(0,l_{2})}, \Delta_{3}^{(0,l_{2})}\right] \subset \Delta_{3}^{(0,l_{2})}.
$$
The reader may then easily check that the same holds for $l_{2} = 3$:
$$\Gamma_{3}^{(0,3)}=\left\{ \ddu{2}{3}, \ddu{2}{2}, \ddu{2}{1}\right\},$$
$$\Delta_{3}^{(0,3)} = \left\{ \ddu{2}{0}, \ddu{1}{0}, \frac{\partial}{\partial x_{3}}, \frac{\partial}{\partial x_{2}},  \frac{\partial}{\partial x_{1}} \right\} = \ol{\Delta_{3}^{(0,3)}}.$$

\paragraph{$\bullet~ \mathbf{For~all~k\geq 4.}$} Since $\ad_{g_{0}^{(0,l_{2})}}^{k}\ddu{1}{0}$ and $\ad_{g_{0}^{(0,l_{2})}}^{k-3}\ddu{2}{0}$ are linear combinations of $\frac{\partial}{\partial x_{1}}$, $\frac{\partial}{\partial x_{2}}$ and $\frac{\partial}{\partial x_{3}}$ only, we have, for all $l_{2}\geq 4$:
$$
\Delta_{k}^{(0,l_{2})} = \ol{\Delta_{k}^{(0,l_{2})}}, \quad \left[ \Gamma_{k}^{(0,l_{2})}, \Delta_{k}^{(0,l_{2})}\right] \subset \Delta_{k}^{(0,l_{2})},
$$
and, for $l_{2}=3$, using the fact that 
$\ad_{g_{0}^{(0,3)}}\ddu{2}{0} = - x_{3} \frac{\partial}{\partial x_{1}} - x_{1}\frac{\partial}{\partial x_{2}} - x_{2}\frac{\partial}{\partial x_{3}} - \frac{\partial}{\partial x_{4}}$, we have
$$\Gamma_{4}^{(0,3)}=\left\{ \ddu{2}{3}, \ddu{2}{2}, \ddu{2}{1}\right\},$$
$$\begin{aligned}
\Delta_{4}^{(0,3)} &= \left\{ \ddu{2}{0}, - x_{3} \frac{\partial}{\partial x_{1}} - x_{1}\frac{\partial}{\partial x_{2}} - x_{2}\frac{\partial}{\partial x_{3}} - \frac{\partial}{\partial x_{4}}, \ddu{1}{0}, \frac{\partial}{\partial x_{3}}, \frac{\partial}{\partial x_{2}},  \frac{\partial}{\partial x_{1}} \right\} \\
&= \left\{ \ddu{2}{0}, \ddu{1}{0}, \frac{\partial}{\partial x_{4}}, \frac{\partial}{\partial x_{3}}, \frac{\partial}{\partial x_{2}},  \frac{\partial}{\partial x_{1}} \right\} = \ol{\Delta_{4}^{(0,3)}} = \T\R^{6}
\end{aligned}$$
and
$$\Delta_{k}^{(0,3)} = \T\R^{6}\quad \forall k\geq 4,$$
hence $j_{2} = 3$ and $k_{\ast}^{(0,3)}=4$.

We conclude that the conditions of theorem~\ref{cns0:thm} are satisfied for all $k$ provided that $j_{2} = 3$.

On the contrary, initializing the algorithm by $H_{1,2}^{0}$, which amounts to prolonging the first input $u_{1}^{(1)}$, we may easily check that, for all $l_{1}\geq 3$,
$$\Delta_{2}^{(l_{1},0)}= \left\{ \ddu{2}{0}, - x_{3} \frac{\partial}{\partial x_{1}} - x_{1}\frac{\partial}{\partial x_{2}} - x_{2}\frac{\partial}{\partial x_{3}} - \frac{\partial}{\partial x_{4}}, (x_{1} - u_{1}^{(0)}) \frac{\partial}{\partial x_{1}} - x_{3} \frac{\partial}{\partial x_{3}}\right\}$$ 
is not involutive, thus contradicting condition (i) of Theorem~\ref{cns0:thm}, which proves that the minimal prolongation of the second input is equal to 3.

Let us finally give the construction of the flat output and prolonged state diffeomorphism.
The prolonged state is $(x_{1}, x_{2}, x_{3}, x_{4}, u_{1}^{(0)}, u_{2}^{(0)}, u_{2}^{(1)}, u_{2}^{(2)}, u_{2}^{(3)})$ of dimension $9 = n+ m + \mid \mbf{j} \mid$. 

We get
$$
\begin{aligned}
&G_{0}^{(0,3)} = \left\{\ddu{1}{0}, \ddu{2}{3}\right\} = \ol{G_{0}^{(0,3)}}, \quad
G_{1}^{(0,3)} =  \left\{\frac{\partial}{\partial x_{3}}, \ddu{2}{2}\right\} \oplus G_{0}^{(0,3)} = \ol{G_{1}^{(0,3)}},\\
&G_{2}^{(0,3)} =  \left\{u_{2}^{(0)} \frac{\partial}{\partial x_{1}} +  \frac{\partial}{\partial x_{2}}, \ddu{2}{1}\right\} \oplus G_{1}^{(0,3)} = \ol{G_{2}^{(0,3)}}, \\
&G_{3}^{(0,3)} =  \left\{(u_{2}^{(1)} -1) \frac{\partial}{\partial x_{1}} - u_{2}^{(0)} \frac{\partial}{\partial x_{3}},  \ddu{2}{0}\right\} \oplus G_{2}^{(0,3)} = \ol{G_{3}^{(0,3)}}, \\
&G_{4}^{(0,3)}  \left\{- x_{3} \frac{\partial}{\partial x_{1}} - x_{1}\frac{\partial}{\partial x_{2}} - x_{2}\frac{\partial}{\partial x_{3}} - \frac{\partial}{\partial x_{4}}\right\} \oplus G_{4}^{(0,3)}  = \T\R^{9}
\end{aligned}
$$
The Brunovsk\'{y}'s controllability indices are $\kappa_{1}^{(0,3)}=5$ and $\kappa_{2}^{(0,3)}=4$ and the system of PDE's that the flat outputs must satisfy is :
$$\begin{aligned}
&\left < G_{k}^{(0,3)}, dy_{1} \right > =0, \quad k= 0,\ldots, 3, \qquad \left < G_{4}^{(0,3)}, dy_{1} \right > \neq 0\\
&\left < G_{k}^{(0,3)}, dy_{2} \right > =0, \quad k= 0,\ldots, 2,  \qquad \left < G_{3}^{(0,3)}, dy_{2} \right > \neq 0.
\end{aligned}$$
Its solution is given by $y_{1}=x_{4}$, $y_{2}= x_{1}-u_{2}^{(0)}x_{2}$.

\subsection{The Pendulum Example \cite[section II. C]{FLMR-ieee}}\label{pend:ex}
This model of pendulum in the vertical plane has been studied in \cite[section II. C]{FLMR-ieee}, \cite[section 6.2.3]{L_book},\cite[section 5.3]{L-AAECC} where it is shown to be flat. We prove here that it is not flat by pure prolongation. 

Though naturally control-affine, it is presented here in its prolonged form \eqref{control-aff-sys:eq}:
\begin{equation}\label{pend-sys:eq}
\begin{array}{cclcccl}
\ds \dot{x}_1 & = &\ds x_2 &~\quad&\ds  \dot{x}_2 & = &\ds u_1^{(0)}\\
\ds \dot{y}_1 & = &\ds y_2 &~\quad&\ds \dot{y}_2 & = &\ds u_2^{(0)}\\
\ds \dot{\theta}_1 & = &\ds \theta_2 &~\quad&\ds \dot{\theta}_2 & = &\ds -\frac{u_{1}^{(0)}}{\ee}\cos\theta_1 + \frac{u_{2}^{(0)} +1}{\ee}\sin\theta_1\\
\ds \dot{u}_1^{(0)} & = &\ds u_1^{(1)} &~\quad& \ds \dot{u}_2^{(0)} & = &\ds u_2^{(1)}.
\end{array}
\end{equation}

The state is 
$(x_1, x_2, y_1, y_2, \theta_1, \theta_2, u_1^{(0)}, u_2^{(0)})$, of dimension $n+m=6+2=8$. The associated non prolonged vector fields are 
\begin{equation}\label{pend-vecfield:eq}
\begin{aligned}
g_{0}^{(0)} &= x_2\frac{\partial}{\partial x_{1}} + y_2\frac{\partial}{\partial y_{1}} + \theta_2\frac{\partial}{\partial \theta_{1}}
+ \frac{1}{\ee}\sin\theta_1\frac{\partial}{\partial \theta_{2}} \\
&+ u_{1}^{(0)}\left( \frac{\partial}{\partial x_{2}} - \frac{1}{\ee}\cos\theta_{1}\frac{\partial}{\partial \theta_{2}}\right) + u_{2}^{(0)}\left(\frac{\partial}{\partial y_{2}} +  \frac{1}{\ee}\sin\theta_{1}\frac{\partial}{\partial \theta_{2}}\right) \\
g_{1}^{(0)} &= \ddu{1}{0} \qquad g_{2}^{(0)} = \ddu{2}{0}
\end{aligned}
\end{equation}
We have
$$G_{0}^{(\0)} = \{ g_{1}^{(0)}, g_{2}^{(0)} \} = \{ \ddu{1}{0}, \ddu{2}{0} \}$$
and 
$$\begin{aligned}
G_{1}^{(\0)} &= \{ g_{1}^{(0)}, g_{2}^{(0)}, \ad_{g_{0}^{(\0)}}g_{1}^{0}, \ad_{g_{0}^{(\0)}}g_{2}^{0} \}\\ 
&= \{ \ddu{1}{0}, \ddu{2}{0}, -\ddxx{2} +\frac{1}{\ee}\cos\theta_{1}\ddth{2}, -\ddyy{2} -\frac{1}{\ee}\sin\theta_{1}\ddth{2} \} = \ol{G_{1}^{(\0)}}
\end{aligned}$$
but
$$\begin{aligned}
G_{2}^{(\0)} &= G_{1}^{(\0)} + \{ \ddxx{1} - \frac{1}{\ee}\cos\theta_{1}\ddth{1} -\frac{\theta_{2}}{\ee}\sin\theta_{1}\ddth{2}, \ddyy{1} + \frac{1}{\ee}\sin\theta_{1}\ddth{1} -\frac{\theta_{2}}{\ee}\cos\theta_{1}\ddth{2} \}\\
&\neq \ol{G_{2}^{(\0)}}.
\end{aligned}$$

As before, we may initialize the pure prolongation algorithm by choosing
$$H_{1,1}^{0} = \{  g_{1}^{(0)}, \ad_{g_{0}^{(\0)}}g_{1}^{0} \} = \{ \ddu{1}{0},  -\ddxx{2} + \frac{1}{\ee}\cos\theta_{1}\ddth{2} \} = \ol{H_{1,1}^{0}}$$
which amounts to prolonging $u_{2}^{(1)}$, or
$$H_{1,2}^{0} = \{  g_{2}^{(0)}, \ad_{g_{0}^{(\0)}}g_{2}^{0} \} = \{ \ddu{2}{0},  -\ddyy{2} - \frac{1}{\ee}\sin\theta_{1}\ddth{2} \} = \ol{H_{1,2}^{0}}$$
which amounts to prolonging $u_{1}^{(1)}$.

It is noticeable that the inputs $u_{1}^{(1)}$ and $u_{2}^{(1)}$ play a symmetric role . Thus, one may choose indifferently one of them as the non prolonged input. Let us choose $u_{1}^{(1)}$ as non prolonged input, with the initialization $H_{1,1}^{0}$. Thus, the vector fields associated to a prolongation of length $l_{2}$ on the second input  read:
\begin{equation}\label{pend-vecfield-2prol:eq}
\begin{aligned}
g_{0}^{(0,l_{2})} &= x_2\frac{\partial}{\partial x_{1}} + y_2\frac{\partial}{\partial y_{1}} + \theta_2\frac{\partial}{\partial \theta_{1}} + \frac{1}{\ee}\sin\theta_1\frac{\partial}{\partial \theta_{2}} + u_{1}^{(0)}\left( \frac{\partial}{\partial x_{2}} - \frac{1}{\ee}\cos\theta_{1}\frac{\partial}{\partial \theta_{2}}\right)  \\
&+ u_{2}^{(0)}\left(\frac{\partial}{\partial y_{2}} +  \frac{1}{\ee}\sin\theta_{1}\frac{\partial}{\partial \theta_{2}}\right) + \sum_{p= 0}^{l_{2}-1} u_{2}^{(p+1)}\ddu{2}{p} \\
g_{1}^{(0)} &= \ddu{1}{0} \qquad g_{2}^{(l_{2})} = \ddu{2}{l_{2}}.
\end{aligned}
\end{equation}

\paragraph{$\bullet~ \mathbf{k=1.}$} For all $l_{2}\geq 2$, 
$$\Gamma_{1}^{(0,l_{2})} = \{\ddu{2}{l_{2}}, \ddu{2}{l_{2}-1}\}, \quad 
\Delta_{1}^{(0,l_{2})} = \{\ddu{1}{0}, - \frac{\partial}{\partial x_{2}} + \frac{1}{\ee}\cos\theta_{1}\frac{\partial}{\partial \theta_{2}}\},$$ 
and if $l_{2}=1$, 
$$\Gamma_{1}^{(0,1)} = \{\ddu{2}{1}\}, \quad
\Delta_{1}^{(0,l_{2})} = \{\ddu{1}{0}, -\frac{\partial}{\partial x_{2}} + \frac{1}{\ee}\cos\theta_{1}\frac{\partial}{\partial \theta_{2}}, \ddu{2}{0}\}.$$ 
Thus, we have $\ol{\Delta_{1}^{(0,l_{2})}} = \Delta_{1}^{(0,l_{2})}$ and $[\Gamma_{1}^{(0,l_{2})}, \Delta_{1}^{(0,l_{2})}] \subset \Delta_{1}^{(0,l_{2})}$  for all $l_{2}\geq 1$.

\bigskip

\paragraph{$\bullet~ \mathbf{k=2.}$} For all $l_{2}\geq 3$, 
$$\begin{aligned}
\Gamma_{2}^{(0,l_{2})} &= \{\ddu{2}{l_{2}}, \ddu{2}{l_{2}-1}, \ddu{2}{l_{2}-2}\}, \\
\Delta_{2}^{(0,l_{2})} &= \{\ddu{1}{0}, - \frac{\partial}{\partial x_{2}} + \frac{1}{\ee}\cos\theta_{1}\frac{\partial}{\partial \theta_{2}}, \frac{\partial}{\partial x_{1}} - \frac{1}{\ee}\cos\theta_{1}\frac{\partial}{\partial \theta_{1}} - \frac{1}{\ee}\theta_{2}\sin\theta_{1}\frac{\partial}{\partial \theta_{2}}\}.
\end{aligned}$$ 
If $l_{2}=2$, 
$$\begin{aligned}
\Gamma_{2}^{(0,2)} &= \{\ddu{2}{2}, \ddu{2}{1}\},\\ 
\Delta_{2}^{(0,2)} &= \{\ddu{1}{0}, - \frac{\partial}{\partial x_{2}} + \frac{1}{\ee}\cos\theta_{1}\frac{\partial}{\partial \theta_{2}}, \frac{\partial}{\partial x_{1}} - \frac{1}{\ee}\cos\theta_{1}\frac{\partial}{\partial \theta_{1}} - \frac{1}{\ee}\theta_{2}\sin\theta_{1}\frac{\partial}{\partial \theta_{2}}, \ddu{2}{0}\}.\end{aligned}$$ 
If $l_{2}=1$, 
$$\begin{aligned}
\Gamma_{2}^{(0,1)} &= \{\ddu{2}{1}\},\\ 
\Delta_{2}^{(0,1)} &= \{\ddu{1}{0}, - \frac{\partial}{\partial x_{2}} + \frac{1}{\ee}\cos\theta_{1}\frac{\partial}{\partial \theta_{2}}, \frac{\partial}{\partial x_{1}} - \frac{1}{\ee}\cos\theta_{1}\frac{\partial}{\partial \theta_{1}} - \frac{1}{\ee}\theta_{2}\sin\theta_{1}\frac{\partial}{\partial \theta_{2}}, \\
&\hspace{4cm}\ddu{2}{0}, -\frac{\partial}{\partial y_{2}} -  \frac{1}{\ee}\sin\theta_{1}\frac{\partial}{\partial \theta_{2}}\}.
\end{aligned}$$
Again, we have $[\Gamma_{2}^{(0,l_{2})}, \Delta_{2}^{(0,l_{2})}] \subset \Delta_{2}^{(0,l_{2})}$ for all $l_{2}\geq 1$, but $\ol{\Delta_{2}^{(0,l_{2})}} \neq \Delta_{2}^{(0,l_{2})}$ for all $l_{2}\geq 1$ since, \eg
$$[ - \frac{\partial}{\partial x_{2}} + \frac{1}{\ee}\cos\theta_{1}\frac{\partial}{\partial \theta_{2}}, \frac{\partial}{\partial x_{1}} - \frac{1}{\ee}\cos\theta_{1}\frac{\partial}{\partial \theta_{1}} - \frac{1}{\ee}\theta_{2}\sin\theta_{1}\frac{\partial}{\partial \theta_{2}}] =  - \frac{1}{\ee^{2}} \sin2\theta_{1}\frac{\partial}{\partial \theta_{2}} \not \in \Delta_{2}^{(0,l_{2})}$$ 
for all $l_{2} \geq 1$. 
Changing the non prolonged input $u_{1}^{(0)}$ in $u_{2}^{(0)}$, as previously announced, a similar calculation, left to the reader, shows that $\Delta_{2}^{(l_{1},0)}$ is not involutive for all $l_{1}\geq 1$. Thus,  according to the first item of theorem~\ref{alg-proof:thm}, system \eqref{pend-sys:eq} is not flat by pure prolongation, though differentially flat, as shown in \cite[section C]{FLMR-ieee}.

\subsection{An example with 3 inputs \cite{KLO_ijrnc20}}\label{3in:ex}

\begin{equation}\label{ex2-sys:eq}
\begin{array}{ccl}
\dot{x}_{1} & = & u_{1}^{(0)} \\
\dot{x}_{2} & = & x_{3} u_{1}^{(0)} \\ 
\dot{x}_{3} & = & x_{4} u_{1}^{(0)} + x_{1} u_{3}^{(0)} \\
\dot{x}_{4} & = & u_{2}^{(0)}.
\end{array}
\end{equation}
The state is $(x_{1}, \ldots, x_{4}, u_{1}^{(0)}, u_{2}^{(0)}, u_{3}^{(0)})$ (of dimension $n+m= 4+3=7$) and the control inputs are $(u_{1}^{(1)}, u_{2}^{(1)}, u_{3}^{(1)})$. The reader may easily see that $G_{1}^{(\0)}$ is not involutive and thus that this system is not static feedback linearizable.

This example may be found in \cite{KLO_ijrnc20} in the context of control affine systems with $n$ states and  $n-1$ inputs with the property that there exists $i=1,2,3$ such that 
\begin{equation}\label{3inprop:eq}
\dim\{ f_{1}, f_{2}, f_{3}, \ad{g} f_{i}\}=4,
\end{equation}
with 
$$f_{1} = \ddxx{1} + x_{3}\ddxx{2} +  x_{4} \ddxx{3}, \quad f_{2} = \ddxx{4}, \quad f_{3} = x_{1}\ddxx{3}, \quad g = u_{1}^{(0)} f_{1} + u_{2}^{(0)} f_{2} + u_{3}^{(0)} f_{3},$$
a property satisfied for $i=3$ since  
$$\ad{g} f_{3} = u_{1}^{(0)}  [f_{1},f_{3}] = u_{1}^{(0)} \left( \ddxx{3} - x_{1}\ddxx{2}\right) \not\in \{ f_{1}, f_{2}, f_{3}\}.$$ 
Therefore, in \cite[Theorem 3]{KLO_ijrnc20}, flat outputs are exhibited as 3 independent first integrals of $f_{3}$, yielding the prolongation $\dot{u}_{1}^{(1)}= u_{1}^{(2)}$, $\dot{u}_{2}^{(1)}= u_{2}^{(2)}$, with $\mbf{j}= (1, 1, 0)$ and thus $\mid \mbf{j} \mid = 2$. 

We show here that this prolongation is not minimal and we compute the minimal one. 

We initialize the algorithm by remarking that $\{ f_{2}, f_{3}\}$ is the unique maximal involutive subdistribution of $\{ f_{1}, f_{2}, f_{3}\}$, thus indicating that $u_{2}^{(1)}$ and $u_{3}^{(1)}$ are suitable candidates of non prolonged inputs.
We thus, consider the following vector fields, corresponding to prolongations of $u_{1}^{(1)}$, for all $l\geq 1$:
$$g_{0}^{(l,0,0)}\triangleq   u_{1}^{(0)} \left(\ddxx{1} + x_{3}\ddxx{2} + x_{4} \ddxx{3}\right) + u_{2}^{(0)} \ddxx{4} + u_{3}^{(0)} x_{1}\ddxx{3} + \sum_{p=0}^{l-1} u_{1}^{(p+1)}\ddu{1}{p},$$
$$g_{1}^{(l)}=\ddu{1}{l}, \qquad g_{2}^{(0)}=\ddu{2}{0}, \qquad g_{3}^{(0)}=\ddu{3}{0}.$$
The reader may easily check that $\Delta_{k}^{(l,0,0)} = \ol{\Delta_{k}^{(l,0,0)}}$ and $[\Gamma_{k}^{(l,0,0)}, \Delta_{k}^{(l,0,0)}] \subset \Delta_{k}^{(l,0,0)}$ for all $l\geq 1$ and $k=0,1,2$, and that $\dim \Delta_{2}^{(1,0,0)} = n+m = 7$. Therefore, the prolongation $\mbf{j}= (1,0,0)$, with $\mid \mbf{j} \mid =1$,  is minimal and the minimally purely prolonged system is equivalent by diffeomorphism and feedback to
$$ y_{1}^{(3)} = u_{1}^{(2)} , \qquad \ddot{y}_{2} = u_{2}^{(1)} \qquad 
y_{3}^{(3)}  = W(y_{1}, \ldots, y_{1}^{(3)}, y_{2}, \ldots, \ddot{y}_{2}, y_{3}, \ldots, \ddot{y}_{3} , u_{3}^{(1)}) $$
with flat outputs
$$y_{1} = x_{1},\quad  y_{2} = x_{4}, \quad  y_{3} = x_{2}$$  
and with
$$
\begin{array}{l}
\ds W(y_{1}, \ldots, y_{1}^{(3)}, y_{2}, \ldots, \ddot{y}_{2}, y_{3}, \ldots, \ddot{y}_{3} , u_{3}^{(1)}) 
\triangleq  y_{2}\dot{y}_{1}\left( \ddot{y}_{1} - \frac{\left(\dot{y}_{1}\right)^2}{y_{1}}\right) + 
\dot{y}_{2}\left(\dot{y}_{1}\right)^2 \\
\ds \hspace{2cm} 
+ y_{3}\frac{y_{1}^{(3)}}{\dot{y}_{1}} 
- \dot{y}_{3} \ddot{y}_{1}\left( \frac{1}{y_{1}} + 2\frac{ \ddot{y}_{1}}{( \dot{y}_{1})^2} \right)
 +  \ddot{y}_{3} \left( \frac{\dot{y}_{1}}{y_{1}} + 2 \frac{\ddot{y}_{1}}{\dot{y}_{1}}  \right) + y_{1}\dot{y}_{1}u_{3}^{(1)}.
\end{array}
$$


\section{Concluding Remarks}
We have established necessary and sufficient conditions for a system to be flat by pure prolongation, \ie belonging to the equivalence class of $0$ with respect to the \emph{equivalence by pure prolongation relation}. These conditions extend preliminary results of \cite{CLMscl,CLM,Sluis-scl93,SlT-scl96,BC-scl2004,FrFo-ejc2005} thanks to a thorough study of purely prolonged vector fields. 
We then deduce a computationally tractable algorithm giving the minimal prolongation in a finite number of steps using only Lie brackets and linear algebra. 

Possible extensions of this work towards general flatness necessary and sufficient conditions are under study.

\acknowledgement{The author wishes to express his warm thanks to Ph. Martin and Y. Kaminski for many fruitful discussions.}

 \bibliographystyle{amsplain}
\bibliography{Flatnew1}

\newpage
\begin{appendices}

\section{A Comparison Formula}\label{annex:compar:sec}
We establish a comparison formula expressing the iterated Lie brackets $\ad^{k}_{g_{0}^{(\mbf{j})}} g_{i}^{(j_{i})}$ for all $k\geq 0$, in terms of a combination of Lie brackets of the vector fields $g_{0}^{(\mbf{0})}= f\ddx$, $g_{1}^{(0)} = \ddu{1}{0}$, \ldots, $g_{m}^{(0)}=\ddu{m}{0}$ of the \emph{original system~\eqref{control-aff-sys:eq}}.

For simplicity's sake, we introduce a fictitious input $u_{0} \triangleq u_{0}^{(0)} \triangleq t$ so that $u_{0}^{(1)} =1$, $j_{0} =1$, and $u_{0}^{(k)}=0$ for all $k>1$. Thus, $\sum_{l=0}^{j_{0}-1} u_{0}^{(l+1)}g_{0}^{(\mbf{l})} = u_{0}^{(1)}g_{0}^{(\mbf{0})} = g_{0}^{(\mbf{0})}$, and 
$$g_{0}^{(\mbf{j})} = g_{0}^{(\mbf{0})} + \sum_{k=1}^{m}\sum_{l=0}^{j_{k}-1} u_{k}^{(l+1)} g_{k}^{(l)}
=  \sum_{k=0}^{m}\sum_{l=0}^{j_{k}-1} u_{k}^{(l+1)} g_{k}^{(l)}.$$ 


\begin{lem}\label{prepa1:lem}
For every $i=1,\ldots,m$, we have:
\begin{equation}\label{adkg0j-gij-:eq:appndx}
 \ad_{g_{0}^{(\mbf{j})}}^{k} g_{i}^{(j_{i})} =
(-1)^k  \frac{\partial}{\partial u_{i}^{(j_{i}-k)}} = (-1)^k g_{i}^{(j_{i}-k)}
\quad \mathrm{if~} k \leq  j_{i}, 
\end{equation}
and, for $k=j_{i}+\nu$, for all $\nu \geq 1$,

\begin{align}
&\ad_{g_{0}^{(\mbf{j})}}^{j_{i}+\nu} g_{i}^{(j_{i})} = (-1)^{j_{i}} \sum_{(r,\mbf{l},\mbf{p})\in \J^{(\mbf{j})}_{i,\nu}}
c_{\nu,\mbf{l}}^{\mbf{p}} u_{p_{r}}^{(l_{r})}  \cdots u_{p_{1}}^{(l_{1})} [g_{p_{r}}^{(0)} ,   \ldots, [g_{p_{1}}^{(0)}, g_{i}^{(0)} ] \ldots ] \label{adkg0j-gij+:eq:appndx}
\\
&= (-1)^{j_{i}} \ad_{g_{0}^{(\0)}}^{\nu} g_{i}^{(0)} 
+ \sum_{(q, r,\mbf{l},\mbf{p})\in \I^{(\mbf{j})}_{i,\nu}}
c_{\nu, q, \mbf{l}}^{\mbf{p}}
u_{p_{r}}^{(l_{r})} \cdots u_{p_{1}}^{(l_{1})} 
[g_{p_{r}}^{(0)} , \ldots,
[g_{p_{1}}^{(0)}, \ad_{g_{0}^{(\0)}}^{q} g_{i}^{(0)} ] \ldots ] \label{adkg0j-adq0gij+:eq:appndx}
\end{align}
where $\J^{(\mbf{j})}_{i,\nu}$ (resp. $\I_{i,\nu}$) is the set of multi-integers $(r,\mbf{l},\mbf{p}) \triangleq (r , l_{1}, \ldots, l_{r}, p_{1}, \ldots, p_{r})$ (resp. $(q, r , \mbf{l},\mbf{p}) \triangleq (q, r , l_{1}, \ldots, l_{r}, p_{1}, \ldots, p_{r})$) defined by 
\begin{align}
&\J^{(\mbf{j})}_{i,\nu} \triangleq \{(r , \mbf{l},\mbf{p}) \mid 1\leq r \leq \nu, \quad 0 \leq p_{\alpha} \leq m, ~1\leq l_{\alpha}\leq j_{\alpha},~ \alpha=1,\ldots, r,  ~ \;\mid \mbf{l} \mid = \nu \}, \label{ineq-indJ-adg:eq:appndx} 
\\
&(resp.\nonumber\\
&\I^{(\mbf{j})}_{i,\nu} \triangleq \left\{
\ds q,r \geq 1,~ 0\leq p_{\alpha} \leq m,~ 1\leq l_{\alpha}\leq j_{\alpha},~\alpha =1,\ldots, r, ~p_{r} \neq 0,~ r\leq~ \mid \mbf{l} \mid ~ = \nu-q \right\}) \label{ineq-indI-adg:eq:appndx}
\end{align}
with $\mid \mbf{l} \mid \triangleq \sum_{\alpha=1}^{r} l_{\alpha}$, $c_{\nu,\mbf{l}}^{\mbf{p}}\triangleq c_{\nu, l_{1}, \ldots, l_{r}}^{p_{1}, \ldots, p_{r}}\in \Z$ (resp. $c_{\nu, q, \mbf{l}}^{\mbf{p}} \triangleq c_{\nu, q, l_{1}, \ldots, l_{r}}^{p_{1}, \ldots, p_{r}}\in \Z$) and with the convention that the summation of the right-hand side of \eqref{adkg0j-gij+:eq:appndx} (resp. \eqref{adkg0j-adq0gij+:eq:appndx}) vanishes if $\J^{(\mbf{j})}_{i,\nu}= \emptyset$ (resp.  $\I^{(\mbf{j})}_{i,\nu}= \emptyset$), 
each bracket $[g_{p_{r}}^{(0)} ,   \ldots, [g_{p_{1}}^{(0)}, g_{i}^{(0)} ] \ldots ]$ in \eqref{adkg0j-gij+:eq:appndx}(resp. $[g_{p_{r}}^{(0)} , \ldots,
[g_{p_{1}}^{(0)}, \ad_{g_{0}^{(\0)}}^{q} g_{i}^{(0)} ] \ldots ]$ in \eqref{adkg0j-adq0gij+:eq:appndx}) depending at most on $\ol{x}^{(\mbf{0})}$.
%
%
\end{lem}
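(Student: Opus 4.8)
The plan is to prove Lemma~\ref{prepa1:lem} by induction on $k$, exploiting the decomposition $g_{0}^{(\mbf{j})} = \sum_{k=0}^{m}\sum_{l=0}^{j_{k}-1} u_{k}^{(l+1)} g_{k}^{(l)}$ introduced just before the statement (with the fictitious input $u_{0}$). First I would settle the easy range $k\leq j_{i}$: formula \eqref{adkg0j-gij-:eq:appndx} follows exactly as in Lemma~\ref{prepa2:lem}, since bracketing $g_{i}^{(j_{i})}=\ddu{i}{j_{i}}$ with $g_{0}^{(\mbf{j})}$ only picks out the term $u_{i}^{(j_{i})}\ddu{i}{j_{i}-1}$ in the sum and lowers the derivative order by one, producing the sign $(-1)^{k}$. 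This also gives the base case $\nu=0$, i.e. $\ad_{g_{0}^{(\mbf{j})}}^{j_{i}} g_{i}^{(j_{i})} = (-1)^{j_{i}} g_{i}^{(0)} = (-1)^{j_{i}}\ad_{g_{0}^{(\0)}}^{0} g_{i}^{(0)}$, which matches both \eqref{adkg0j-gij+:eq:appndx} and \eqref{adkg0j-adq0gij+:eq:appndx} with empty index sets.

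For the inductive step I would pass from $\nu$ to $\nu+1$ by computing $\ad_{g_{0}^{(\mbf{j})}}^{j_{i}+\nu+1} g_{i}^{(j_{i})} = [g_{0}^{(\mbf{j})}, \ad_{g_{0}^{(\mbf{j})}}^{j_{i}+\nu} g_{i}^{(j_{i})}]$, substituting the inductive form of $\ad_{g_{0}^{(\mbf{j})}}^{j_{i}+\nu} g_{i}^{(j_{i})}$ and the expansion $g_{0}^{(\mbf{j})} = g_{0}^{(\0)} + \sum_{k=1}^{m}\sum_{l=0}^{j_{k}-1} u_{k}^{(l+1)} g_{k}^{(l)}$. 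The bracket splits by the Leibniz rule into three types of contributions: (a) the bracket of $g_{0}^{(\0)}$ with each iterated bracket $[g_{p_{r}}^{(0)},\ldots,[g_{p_{1}}^{(0)}, g_{i}^{(0)}]\ldots]$ (which stays of the same form, as all these vector fields are $\ddu{}{0}$'s or $f\ddx$, depending only on $\ol x^{(\0)}$); (b) terms where $g_{0}^{(\0)}$, acting as a derivation, hits a coefficient $u_{p_{\alpha}}^{(l_{\alpha})}$, replacing it by $u_{p_{\alpha}}^{(l_{\alpha}+1)}$ (this is where the constraint $l_{\alpha}\leq j_{\alpha}$ is preserved — one must check $l_{\alpha}+1\leq j_{\alpha}$ whenever this term is nonzero, using $u_{p_\alpha}^{(l_\alpha+1)}$ being a genuine state variable only for $l_\alpha < j_{p_\alpha}$, and otherwise the term is absorbed into a different summand); and (c) terms from $[g_{k}^{(l)}, \cdot]$ with $k\geq 1$, $l\geq 1$: since $g_{k}^{(l)}=\ddu{k}{l}$ commutes with every $g_{p}^{(0)}=\ddu{p}{0}$ and with $g_{i}^{(0)}$, the only surviving contribution is $\ddu{k}{l}$ acting as a derivation on the coefficients $u_{p_{\alpha}}^{(l_{\alpha})}$, which forces $p_{\alpha}=k$, $l_{\alpha}=l$, and again produces an iterated bracket of $g^{(0)}$'s with one more factor $u_{k}^{(l+1)}$ in front, raising $r$ by one and $|\mbf{l}|$ by one. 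Collecting (a)--(c), one verifies that the total degree $|\mbf{l}|$ increases by exactly $1$, matching $|\mbf{l}|=\nu+1$, that $r\leq |\mbf{l}|$ is preserved, and that all resulting brackets depend only on $\ol x^{(\0)}$; relabelling the integer coefficients gives new $c\in\Z$, which proves \eqref{adkg0j-gij+:eq:appndx}. The alternative form \eqref{adkg0j-adq0gij+:eq:appndx} is then obtained by grouping the contributions of type (a): iterating $\ad_{g_{0}^{(\0)}}$ on the ``innermost'' copy of $g_{i}^{(0)}$ produces the term $(-1)^{j_i}\ad_{g_{0}^{(\0)}}^{\nu} g_{i}^{(0)}$ plus brackets of $g_{p}^{(0)}$'s with $\ad_{g_{0}^{(\0)}}^{q} g_{i}^{(0)}$ for $1\leq q<\nu$, which is exactly the index set $\I^{(\mbf{j})}_{i,\nu}$ with $q+|\mbf{l}|=\nu$; one checks by a parallel induction (or by reorganizing the Jacobi-identity-generated terms) that no other brackets appear.

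The main obstacle I expect is the bookkeeping of the index sets $\J^{(\mbf{j})}_{i,\nu}$ and $\I^{(\mbf{j})}_{i,\nu}$ and verifying that the constraints $1\leq l_{\alpha}\leq j_{\alpha}$, $r\leq|\mbf{l}|=\nu$ (resp. $=\nu-q$), $p_{r}\neq 0$ are exactly preserved — in particular handling the boundary case $l_{\alpha}=j_{p_\alpha}$ in contribution (b), where $u_{p_\alpha}^{(j_{p_\alpha}+1)}$ is no longer a coordinate of $X^{(\mbf{j})}$ so the term must either vanish or be re-expressed; and making sure the ``depends at most on $\ol x^{(\0)}$'' claim for each iterated bracket genuinely holds (it does, since each $g_{p}^{(0)}$ with $p\geq 1$ is $\ddu{p}{0}$ and $g_{0}^{(\0)}=f(x,u)\ddx$, so all brackets live in $\V X^{(\0)}$ and their components are functions of $x,u$ only). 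None of this is deep, but it is the part most prone to sign or index errors, so I would carry it out carefully, perhaps isolating the key commutation facts $[\ddu{k}{l},\ddu{p}{0}]=0$ and $[\ddu{k}{l}, f\ddx]=0$ for $l\geq 1$ as a preliminary observation before running the induction.
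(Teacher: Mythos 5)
Your overall strategy is the same as the paper's: establish \eqref{adkg0j-gij-:eq:appndx} by direct computation, then prove \eqref{adkg0j-gij+:eq:appndx}--\eqref{adkg0j-adq0gij+:eq:appndx} by induction on $\nu$, expanding $[g_{0}^{(\mbf{j})},\cdot]$ by the Leibniz rule and using the commutation facts $[\ddu{k}{l},\ddu{p}{0}]=0$ and $[\ddu{k}{l},f\ddx]=0$ for $l\geq 1$, together with the invariant that every nonzero iterated bracket is vertical and depends at most on $\ol{x}^{(\0)}$. However, as written, the inductive step contains a concrete error in your enumeration (a)--(c) of the contributions. First, your (b) attributes the replacement $u_{p_{\alpha}}^{(l_{\alpha})}\mapsto u_{p_{\alpha}}^{(l_{\alpha}+1)}$ to $g_{0}^{(\0)}=f\ddx$ acting as a derivation on the coefficients, but $L_{f\ddx}\,u_{p}^{(l)}=0$; this replacement comes from the chain-of-integrators part $\sum_{k,l}u_{k}^{(l+1)}\ddu{k}{l}$ of $g_{0}^{(\mbf{j})}$. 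Second, and more seriously, your list omits the terms $u_{k}^{(1)}\ddu{k}{0}$ (i.e.\ $k\geq 1$, $l=0$) of the prolonged drift: these are precisely the terms that deepen the bracket, producing $u_{k}^{(1)}[g_{k}^{(0)},\,\cdot\,]$ with $p_{r+1}=k\neq 0$ and $l_{r+1}=1$, which account for most of $\J^{(\mbf{j})}_{i,\nu+1}$. Your (c) (restricted to $k\geq1$, $l\geq1$) cannot supply them: those terms only differentiate the coefficients, bumping one $l_{\alpha}$ by one while leaving $r$ unchanged; and if, as you describe, they added a new factor $u_{k}^{(l+1)}$ in front while raising $r$ by one, then $\mid\mbf{l}\mid$ would increase by $l+1\geq 2$, contradicting $\mid\mbf{l}\mid=\nu+1$. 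Executed literally, the plan would therefore not reproduce \eqref{adkg0j-gij+:eq:appndx}.

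A second, smaller point: the base case cannot be $\nu=0$, since $\J^{(\mbf{j})}_{i,0}$ is empty (it requires $1\leq r\leq 0$), so the right-hand side of \eqref{adkg0j-gij+:eq:appndx} would be $0\neq(-1)^{j_{i}}g_{i}^{(0)}$. The induction must start at $\nu=1$, where one checks directly, as the paper does, that $\ad_{g_{0}^{(\mbf{j})}}^{j_{i}+1}g_{i}^{(j_{i})}=(-1)^{j_{i}}\ad_{g_{0}^{(\0)}}g_{i}^{(0)}$, which is exactly the single term of $\J^{(\mbf{j})}_{i,1}$ corresponding to $r=1$, $p_{1}=0$, $l_{1}=1$ under the fictitious-input convention $u_{0}^{(1)}=1$. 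Once these two repairs are made, your argument coincides with the one given in the appendix.
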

\begin{proof} By induction. 
It is immediately seen that
$$
 \ad_{g_{0}^{(\mbf{j})}} g_{i}^{(j_{i})} = [ f \frac{\partial}{\partial x} + \sum_{k=1}^{m}\sum_{l = 0}^{j_{k}-1} u_{k}^{(l+1)} \ddu{k}{l} , \ddu{i}{j_{i}}] 
 = - \ddu{i}{j_{i-1}} = - g_{i}^{(j_{i}-1)}
$$ 
Iterating this computation up to $k= j_{i}$ yields \eqref{adkg0j-gij-:eq:appndx}. In particular:
$$\ad^{j_{i}}_{g_{0}^{(\mbf{j})}} g_{i}^{(j_{i})} = (-1)^{j_{i}} \ddu{i}{0} = (-1)^{j_{i}} g_{i}^{(0)}.$$
Then, for $k=j_{i}+1$, using the fact that 
$[\ddu{k}{l} , \ddu{i}{0}] =0$ 
for all $i$, $k$ and $l\geq 0$, we have:
\begin{equation}
\label{ad-j+1g0-gj:eq:appndx}
\begin{aligned}
\ad_{g_{0}^{(\mbf{j})}}^{j_{i}+1} g_{i}^{(j_{i})} &= [g_{0}^{(\mbf{j})}, \ad_{g_{0}^{(\mbf{j})}}^{j_{i}} g_{i}^{(j_{i})}] = (-1)^{j_{i}} [g_{0}^{(\mbf{j})}, g_{i}^{(\0)}]\\
&= (-1)^{j_{i}} [ f \frac{\partial}{\partial x} 
+ \sum_{k=1}^{m}\sum_{l=0}^{j_{k}-1} u_{k}^{(l+1)} \ddu{k}{l} , \ddu{i}{0}] \\
& = (-1)^{j_{i}} [ f \frac{\partial}{\partial x} , \ddu{i}{0}] = (-1)^{j_{i}} \ad_{g_{0}^{(\mbf{0})}} g_{i}^{(0)}
\end{aligned}
\end{equation}
which proves that \eqref{adkg0j-gij+:eq:appndx} and  \eqref{adkg0j-adq0gij+:eq:appndx} hold at the order $k=j_{i}+1$, \ie $\nu = 1$, the summation of the right-hand side of \eqref{adkg0j-adq0gij+:eq:appndx} being equal to 0 since $\I^{(\mbf{j})}_{i,1} = \emptyset$.
Furthermore, a direct calculation shows that
\begin{equation}\label{A:gam-kij:eq}
(-1)^{j_{i}} \ad_{g_{0}^{(\mbf{0})}} g_{i}^{(0)} =
(-1)^{(j_{i}+1)} \frac{\partial f}{\partial u_{i}^{(0)}}(\ol{x}^{(\0)})\frac{\partial}{\partial x},
\end{equation} 
which proves that $\ad_{g_{0}^{(\mbf{j})}}^{j_{i}+1} g_{i}^{(j_{i})}\in \V X^{(\0)}$, where $\V X^{(\0)}$ is the vertical bundle of $X^{(\0)}$, 
\ie the set of vector fields that are linear combinations of $\frac{\partial}{\partial x_{1}}, \ldots, \frac{\partial}{\partial x_{n}}$, and whose coefficients are smooth functions that depend at most on  $\ol{x}^{(\mbf{0})}$ (but not of $u^{(l)}$ for all $l\geq 1$). It results that $[\frac{\partial}{\partial u_{p}^{(l)}} , \ad_{g_{0}^{(\mbf{0})}} g_{i}^{(0)}] = [g_{p}^{(l)} , \ad_{g_{0}^{(\mbf{0})}} g_{i}^{(0)}] =0$ for all $l\geq 1$, all $p=1,\ldots,m$, and all $i=1, \ldots,m$.

Assume now that \eqref{adkg0j-gij+:eq:appndx} and  \eqref{adkg0j-adq0gij+:eq:appndx} hold up to $k=j_{i}+\nu$, with $\nu>1$, 
and that all the brackets $[g_{p_{r}}^{(0)} ,   \ldots, [g_{p_{1}}^{(0)}, g_{i}^{(0)} ] \ldots ]$ and $[g_{p_{r}}^{(0)}, [ \ldots, [g_{p_{1}}^{(0)}, \ad_{g_{0}^{(\0)}}^{q} g_{i}^{(0)} ]\ldots ]]$ depend at most on   $\ol{x}^{(\mbf{0})}$. We immediately deduce that
$$[ \frac{\partial}{\partial u_{p_{r+1}}^{(l_{r+1})}} ,[g_{p_{r}}^{(0)}, [ \ldots 
[g_{p_{1}}^{(0)}, \ad_{g_{0}^{(\0)}}^{q} g_{i}^{(0)} ]\ldots ]]] =  [g_{p_{r+1}}^{(l_{r+1})} ,[g_{p_{1}}^{(0)}, [ \ldots 
[g_{p_{r}}^{(0)}, \ad_{g_{0}^{(\0)}}^{q} g_{i}^{(0)} ]\ldots ]]] = 0$$
for all $l_{r+1} \geq 1$, $p_{r+1} \in \{1,\ldots,m\}$,  $q \in \{0, \ldots,\nu - 1\}$,  $r\geq 1$, $p_{1}, \ldots, p_{r} \in \{0,\ldots, m\}$, and $i \in \{1, \ldots,m\}$. Thus:

$$
\begin{aligned}
& \ad_{g_{0}^{(\mbf{j})}}^{j_{i}+\nu+1} g_{i}^{(j_{i})} = [g_{0}^{(\mbf{j})},  \ad_{g_{0}^{(\mbf{j})}}^{j_{i}+\nu} g_{i}^{(j_{i})}]\\
&= (-1)^{j_{i}} \sum_{(r,\mbf{l},\mbf{p})\in \J_{i, \nu}^{(\mbf{j})}} [g_{0}^{(\mbf{j})}, c_{\nu, \mbf{l}}^{\mbf{p}}
u_{p_{r}}^{(l_{r})}  \cdots u_{p_{1}}^{(l_{1})}
[g_{p_{r}}^{(0)} ,   \ldots, [g_{p_{1}}^{(0)}, g_{i}^{(0)} ] \ldots ]]\\
&= (-1)^{j_{i}} \sum_{(r,\mbf{l},\mbf{p})\in \J_{i, \nu}^{(\mbf{j})}}  \sum_{p_{r+1}=0}^{m} 
c_{\nu, \mbf{l}}^{\mbf{p}}  u_{p_{r+1}}^{(1)}u_{p_{r}}^{(l_{r})}  \cdots u_{p_{1}}^{(l_{1})}
[g_{p_{r+1}}^{(0)},[g_{p_{r}}^{(0)} ,   \ldots, [g_{p_{1}}^{(0)}, g_{i}^{(0)} ] \ldots ]]\\
& \qquad +  (-1)^{j_{i}} \sum_{(r,\mbf{l},\mbf{p})\in \J_{i, \nu}^{(\mbf{j})}} \sum_{p_{r+1}=1}^{m} \sum_{l_{r+1}= 0}^{j_{p_{r+1}}} c_{\nu, \mbf{l}}^{\mbf{p}}
u_{p_{r+1}}^{(l_{r+1}+1)}\frac{\partial}{\partial u_{p_{r+1}}^{(l_{r+1})}} 
\left( u_{p_{r}}^{(l_{r})}  \cdots u_{p_{1}}^{(l_{1})}\right) 
[g_{p_{r}}^{(0)} ,   \ldots, [g_{p_{1}}^{(0)}, g_{i}^{(0)} ] \ldots ]\\
&\triangleq \sum_{(r,\mbf{\lambda},\mbf{\mu})\in \K_{i, \nu+1}^{(\mbf{j)}}} c_{\nu+1,\mbf{\lambda}}^{\mbf{\mu}} u_{\mu_{r+1}}^{(\lambda_{r+1})}\cdots  u_{\mu_{1}}^{(\lambda_{1})}[g_{\mu_{r+1}}^{(0)} ,   \ldots, [g_{\mu_{1}}^{(0)}, g_{i}^{(0)} ] \ldots ].
\end{aligned}
$$
Applying the Leibnitz rule to the penultimate line, using the fact that $\mid \mbf{l} \mid = \sum_{\alpha=1}^{r}l_{\alpha}= \nu$ by the induction assumption, we get  $1 + \mid \mbf{l} \mid = \sum_{\alpha=1}^{r+1} \lambda_{\alpha} = \nu+1$, with $\lambda_{\alpha} \geq 1,\; \alpha=1,\ldots, r+1$, and $ 0 \leq \mu_{\alpha} \leq m$, which proves that the latter formula holds for all $(r, \mbf{\lambda},\mbf{\mu})\in \K_{i,\nu+1}^{(\mbf{j})} \subset \J_{i,\nu+1}^{(\mbf{j})}$. We leave as an exercise to the reader the proof of the converse inclusion  $\J_{i,\nu+1}^{(\mbf{j})} \subset \K_{i,\nu+1}^{(\mbf{j})}$.  Therefore \eqref{adkg0j-gij+:eq:appndx}  is valid for all $\nu$. The proof of \eqref{adkg0j-adq0gij+:eq:appndx} follows exactly the same lines. The lemma is proven.
\end{proof}

\end{appendices}

%
%

\end{document}